\documentclass[12pt]{amsart}
\usepackage{fullpage}
\usepackage{amsmath, amsthm, amssymb, enumerate, amsfonts, url}
\usepackage[hidelinks]{hyperref}

\def\ZZ{{\mathbb Z}}

\def\CC{{\mathbb C}}

\def\RR{{\mathbb R}}
\def\QQ{{\mathbb Q}}
\def\Qbar{{\overline{\mathbb Q}}}

\def\QQ{{\mathbb Q}}
\def\Qtr{{{\mathbb Q^{\operatorname{tr}}}}}

\def\Qab{{{\mathbb Q^{\operatorname{ab}}}}}
\def\Zab{{{\mathbb Z^{\operatorname{ab}}}}}
\def\Z{{\mathbb Z}}

\def\iffdef{\stackrel{\operatorname{def}}{\iff}}

\def\OO{{\mathcal O}}

\DeclareMathOperator{\disc}{disc}
\DeclareMathOperator{\Gal}{Gal}

\DeclareMathOperator{\JR}{JR}

\DeclareMathOperator{\Assoc}{Assoc}

\DeclareMathOperator{\Tor}{Tor}

\DeclareMathOperator{\ord}{ord}

\DeclareMathOperator{\PrPo}{PrPo}
\DeclareMathOperator{\Level}{Level}

\DeclareMathOperator{\Group}{Group}

\newtheorem{theorem}{Theorem}[section]
\newtheorem*{theorem*}{Main Theorem}

\newtheorem{lemma}[theorem]{Lemma}
\newtheorem{prop}[theorem]{Proposition}
\newtheorem{corollary}[theorem]{Corollary}
\newtheorem{definition}[theorem]{Definition}

\newtheorem{remark}[theorem]{Remark}

\usepackage[dvipsnames]{xcolor}

\author{Caleb Springer}
\address{Center for Communications Research, Princeton, NJ 08540}
\email{c.springer@idaccr.org}
\title{Definability and undecidability via the torsion subgroup of units}
\date{4 September 2026}
\subjclass[2020]{Primary 11U05; Seconday 12L05, 11U09}

\hypersetup{pdftitle=Definability and undecidability via the torsion subgroup of units,pdfauthor={Caleb Springer}}

\begin{document}

\begin{abstract}
In this paper, we prove that $\ZZ$ is first-order definable in the ring of integers $\Zab$ of the maximal abelian extension $\Qab$ of $\QQ$, which implies that the first-order theory of $\Zab$ is undecidable.
More generally, writing $i = \sqrt{-1}$ and $\Qtr$ for the field of all totally real numbers, we prove new definability and undecidability results for rings of integers of subfields of $\Qtr(i)$, focusing especially on fields which contain infinitely many roots of unity.
The key ingredient for these results is that there is a parameter-free positive-existential formula which defines the roots of unity $\mu(\OO_L)$ inside $\OO_L$ for every field $L\subseteq \Qtr(i)$. 
\end{abstract}
\maketitle

\section{Introduction}
\subsection{A motivating example}
Let $\Qab$ denote the maximal abelian extension of $\QQ$.
By the famous Kronecker--Weber theorem, $\Qab$ is precisely the extension of $\QQ$ generated by all roots of unity, i.e., the algebraic numbers $\zeta$ which satisfy $\zeta^n = 1$ for some $n$.
In particular, $\Qab$ is a totally imaginary quadratic extension of a totally real field, meaning it is a subfield of $\Qtr(i)$ where $\Qtr$ is the field of all totally real numbers and $i = \sqrt{-1}$.

The problem of the first-order decidability (in the language of rings) of the ring of integers $\Zab$ of $\Qab$ has been an open problem of interest for several years, as noted in \cite[\S6.3]{Koenigsmann14}, \cite[Ques.~3.1]{Kartas21}, \cite[\S1]{Springer24} and \cite[Rem.~2.13]{MRS24}.
The guiding motivation for this paper is to resolve this question.

\begin{theorem}\label{thm:main_Zab}
	$\ZZ$ is definable in $\Zab$. Thus, the first-order theory of $\Zab$ is undecidable.
\end{theorem}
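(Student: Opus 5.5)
The plan is to use the key ingredient announced in the abstract: there is a parameter-free positive-existential formula defining $\mu(\OO_L)$ inside $\OO_L$ for every $L\subseteq\Qtr(i)$. Since $\Qab\subseteq\Qtr(i)$, this gives a definable subset $\mu(\Zab)=\mu_\infty$, the group of all roots of unity. The strategy is to transfer the problem from $\Zab$ to its maximal totally real subring $\Zab\cap\Qtr=\ZZ[\zeta_n+\zeta_n^{-1}: n\ge 1]$ and then invoke known undecidability and definability results for totally real rings of integers.

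\textbf{Step 1: define the totally real subring.} Complex conjugation on $\Qab$ is the automorphism $\zeta\mapsto\zeta^{-1}$ on roots of unity, so the elements $\zeta+\zeta^{-1}$ with $\zeta\in\mu$ are definable, and all lie in $\OO_{\Qab\cap\Qtr}$. Every element of $\OO_{\Qab^+}$ is a $\ZZ$-linear combination of such elements; indeed $\ZZ[\zeta_n]^+=\ZZ[\zeta_n+\zeta_n^{-1}]$. However, unbounded sums are not first-order, so I would instead define the totally real subring directly. For $x\in\Zab$ I would use the condition
\[
\exists \zeta\in\mu\ \exists y\ \bigl(x=y\ \wedge\ \text{$x$ is fixed by conjugation}\bigr).
\]
Conjugation itself is not definable, so this needs a trick. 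A workable one: $x$ is totally real iff $x^2+4$ is (roughly) of the form $\zeta+\zeta^{-1}$-related, or $x\bar x$-type characterizations. Concretely, I would aim to show that $x\in\OO_{\Qab^+}$ iff there exist $\zeta\in\mu$ and $w\in\Zab$ with $x-\overline{x}=0$, encoded via $i$: $\Qab=\Qab^+(i)$, and every $z\in\Zab$ satisfies $2z=a+bi$ with $a,b\in\OO_{\Qab^+}$ uniquely. Thus $\OO_{\Qab^+}$ is definable once I can define it as, for instance, the set of $x$ such that $x^2-4=(\zeta-\zeta^{-1})^2\cdot u^2$-type sums. This is the step I expect to be the main obstacle: pinning down a first-order characterization of $\Qab^+\cap\Zab$ using only $\mu$. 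My first attempt would be to use that the totally real elements are exactly those $x$ with $x=\sum$ of bounded many terms $t\,(\zeta+\zeta^{-1})$ with $t$ in a definable totally real set, bootstrapping via the norm-like map $z\mapsto z\cdot\zeta$ with $\zeta=\bar z/z$ when $|z|$ is a unit.

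\textbf{Step 2: define $\ZZ$.} Once $\OO_{\Qab^+}$ is definable, I would apply the known definability results for totally real rings of integers with infinitely many "Julia Robinson-type" properties. Specifically, $\ZZ$ is definable in totally real rings of integers whose JR number is $\infty$ with the minimum attained, or more robustly via the Northcott-type argument: the definable set $\{\zeta+\zeta^{-1}\}$ consists of totally real algebraic integers all of whose conjugates lie in $[-2,2]$. Using the Julia Robinson approach, the set $\{x\in\OO_{\Qab^+}: 0\ll x\ll t\}$ with $t$ varying, together with the parametrized family $\{x: 4-x^2 \text{ totally positive}\}$ defined as $\{\zeta+\zeta^{-1}\}$, gives a definable family whose finite members have unbounded size, yielding definability of $\mathbb N$ by the standard Robinson argument (totally positive elements are sums of four squares by Siegel, which makes total positivity definable). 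Finally, undecidability follows since $\ZZ$ is definable and the theory of $\ZZ$ is undecidable.
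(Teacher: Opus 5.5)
Your starting point is right: the definability of $\mu(\Zab)$ is exactly the input the paper uses. The gap is Step 1, which carries the entire weight of your argument. It is never carried out, and nothing you write amounts to a formula defining $\OO_{\Qab^+}=\Zab\cap\Qtr$ in $\Zab$.

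\begin{itemize}
\item The displayed condition refers to complex conjugation. Conjugation is not in $\mathcal L_{\text{ring}}$, and you give no way of expressing it.
\item The decomposition $2z=a+bi$ with $a,b\in\OO_{\Qab^+}$ presupposes the very set you want to define.
\item The suggestions involving $x^2+4$, $x\overline{x}$, $(\zeta-\zeta^{-1})^2u^2$, or bounded sums of terms $t(\zeta+\zeta^{-1})$ are neither stated precisely nor proved.
\end{itemize}

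What the available tools actually produce are much smaller sets: $\mu$, the totally real unit group $R_3(A)^\times$, and the elements $\zeta+\zeta^{-1}$. There is no evident first-order way to pass from these to all of $\OO_{\Qab^+}$, whose elements are $\ZZ$-combinations of unboundedly many such terms. As it stands, Step 2 has nothing to be relativized to.

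Step 2 also conflates two different results of Robinson type. A definable family containing finite sets of unbounded size (the Henson--Robinson criterion) yields only an interpretation of $\mathbf Q$. That gives undecidability, not a definition of $\mathbb N$. For definability you would need J.~Robinson's stronger theorem for totally real rings whose $\JR$-number is an attained minimum. For $\OO_{\Qab^+}$ that number is $4$, attained by Kronecker's lemma. The hypothesis you quote, ``JR number $\infty$ with the minimum attained,'' is garbled.

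More importantly, the detour through the real subring is unnecessary. The missing idea is to feed $\mu=\mu(\Zab)$ directly into the Mazur--Rubin--Shlapentokh construction. Consider the set of $x$ such that for every $\epsilon\in\mu$ there exist $\delta\in\mu$ and $y$ with $\delta-1=(\epsilon-1)x+(\epsilon-1)^2y$.
\begin{itemize}
\item It is definable once $\mu$ is.
\item It contains every integer $a\ge 0$ (take $\delta=\epsilon^a$), and it is a ring because $\mu$ is a group.
\item Conversely, taking $\epsilon=\zeta_p$ and comparing norms forces $\delta$ to be a power $\zeta_p^a$. Hence $x\equiv a \bmod (\zeta_p-1)\Zab$.
\item A non-integer in $\ZZ+(\zeta_p-1)\Zab$ has $p$ dividing the discriminant of its minimal polynomial. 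This cannot happen for every prime $p$, and $\Zab$ contains $\zeta_p$ for every $p$.
\end{itemize}
Hence this set is exactly $\ZZ$.

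Even for undecidability alone you can bypass $\OO_{\Qab^+}$. The sets of primitive $p^e$-th roots of unity, cut out inside $\mu$ by the condition that $1-z$ be an associate of $1-w$, form a definable family of finite sets of unbounded size.
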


Because $\ZZ$ has undecidable first-order theory, undecidability follows immediately from definability. 
However, as discussed in the following subsections, there are two more general constructions sitting behind this example, one focusing on definability of $\ZZ$ in rings of integers of algebraic extensions of $\QQ$, and the other focusing on establishing undecidability. 

The core technique of this paper is a unit-group-based method, similar to \cite{MRUV20,MRS24,Springer20,Springer24}. 
One of the main results of our previous paper \cite{Springer24} showed that the ring of integers $\OO_{\Qtr(i)}$ is not definable in $\Qtr(i)$, building on work of J. Robinson \cite{Robinson62} and Fried, Haran and V\"{o}lklein \cite{FHV94}. 
This can also be deduced independently from work of Dittmann and Fehm \cite{DF21}.
Currently, as far as we know, $\Qab$ is amenable to neither these known techniques of establishing non-definability of a ring of integers, nor the techniques of proving definability such as \cite{Shlapentokh18}.
We leave the question of the definability of $\Zab$ in $\Qab$, as well as the first-order decidability of $\Qab$ itself, as open questions for the future.

\subsection{Defining the torsion subgroup of units}

Given any set $S$ of algebraic numbers, let $\mu(S)$ be the set of roots of unity in $S$. In other words, if $R$ is a ring of algebraic numbers, then $\mu(R)$ is the torsion subgroup of $R^\times$.  
The key ingredient which fuels all of our main theorems is that we can define roots of unity within rings of algebraic integers in $\Qtr(i)$.
The most general theorem of this flavor is the following.

\begin{theorem}[Theorem~\ref{thm:def_muL_in_OOL_general}]\label{thm:def_muL_in_OOL_general_intro}
    There is a single positive-existential formula 
    $\Tor(z)$ which defines $\mu(\OO_L)$ in $\OO_L$ for every field $L\subseteq \Qtr(i)$.
\end{theorem}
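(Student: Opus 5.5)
The plan is to reduce the definition of $\mu(\OO_L)$ to Kronecker's theorem: an algebraic integer all of whose conjugates have absolute value $1$ is a root of unity. I will write $\bar{\ }$ for complex conjugation. Every $L\subseteq\Qtr(i)$ is stable under $\bar{\ }$, and $\bar{\ }$ commutes with every embedding of $L$ into $\CC$, since $\Qtr(i)$ is a CM-type field (a quadratic extension of a totally real field). So $\sigma(z)$ has absolute value $1$ for every embedding $\sigma$ exactly when $z\bar z=1$. Roots of unity satisfy $z^{-1}=\bar z$ in every embedding, so $z+z^{-1}$ is totally real and lies in $[-2,2]$ under every embedding. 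Conversely, suppose $z$ is a unit with $y=z^{-1}$ and $w=z+y$ is totally real with every conjugate in $[-2,2]$. Then $z$ is a root of $X^2-wX+1$, whose discriminant $w^2-4$ is totally nonpositive. Hence every conjugate of $z$ has absolute value $1$, and Kronecker's theorem makes $z$ a root of unity. The formula will therefore have the shape
$$\Tor(z):\ \exists y\,\exists \vec{t}\ \big(zy=1 \wedge \Phi(z+y,\vec t)\big),$$
where $\Phi$ is a positive-existential formula. I want $\Phi$ to force $w=z+y$ to be totally real with $4-w^2$ totally nonnegative, and I want it to be satisfiable in $\OO_L$ whenever $z\in\mu(\OO_L)$.

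For the satisfiability direction I will exploit the special shape of $w$. For $z\in\mu(\OO_L)$ we have the identities
$$2+w=(1+z)(1+y),\qquad 2-w=-(1-z)(1-y),\qquad 4-w^2=-(z-y)^2.$$
Since $y=\bar z$ in every embedding, each right-hand side has the form $u\bar u$ for some $u\in\OO_L$. The natural candidate for $\Phi$ is therefore a norm-form condition in the CM structure. I want to assert that $2+w$ and $2-w$ are each of the form $u\bar u$. Conjugation is not available in the language of rings, so I must replace $u\bar u$ by a product $u\cdot v$ together with a positive-existential constraint forcing $v$ to act like $\bar u$ under every embedding. For this I will use the unit $z$ itself as the certificate. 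The requirement $y=z^{-1}$ already holds, and what must additionally be forced is that the pair $(z,y)$ is conjugate-symmetric.

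The main obstacle is exactly this forcing step. I need a positive-existential, parameter-free way to express "$w$ is totally real and totally in $[-2,2]$" inside $\OO_L$ that works uniformly for every $L\subseteq\Qtr(i)$. Some tools are unavailable: $i$ need not lie in $L$, and when $i\in L$ every element is a sum of squares, so "sum of squares" cannot detect total positivity. What I would try first is a Julia Robinson–style boundedness argument. I would ask for witnesses $t_1,\dots,t_k\in\OO_L$ satisfying polynomial identities that realize $4-w^2$ as $-(z-y)^2$ and simultaneously make $(z-y)^2$ equal to $-s\,s'$ with $s,s'$ linked through $z,y$. Such identities would bound $|\sigma(w)|$ by $2$ for every embedding $\sigma$. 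The CM property would then convert this bound into total reality, because $\sigma(\bar w)=\overline{\sigma(w)}$. If that fails, the fallback is to first define the maximal totally real subring $\OO_L\cap\Qtr$ positive-existentially, using the fact that an element $x$ is fixed by conjugation iff $x^2$ and $x$ satisfy a suitable unit-based relation. I would then intersect with the Kronecker condition above.

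Finally, I will check that the formula uses no parameters and that its satisfiability and soundness arguments never depend on $L$ beyond $L\subseteq\Qtr(i)$. This uniformity is what Theorem~\ref{thm:def_muL_in_OOL_general_intro} demands.
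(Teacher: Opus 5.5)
\textbf{There is a genuine gap: the formula $\Phi$ is never constructed, and that construction is the whole theorem.}

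Your soundness reduction is correct. If $zy=1$ and $w=z+y$ is totally real with all conjugates in $[-2,2]$, Kronecker makes $z$ a root of unity, and the paper likewise ends at Kronecker via Lemma~\ref{lem:traces_in_0_4}. The problem is everything before that. The ``Julia Robinson--style'' route is not yet an argument. The identities $2+w=(1+z)(1+y)$, $2-w=(1-z)(1-y)$ (without the minus sign you wrote) and $4-w^2=-(z-y)^2$ hold for \emph{every} unit $z$ with $y=z^{-1}$, so they carry no positivity information by themselves. To bound $|\sigma(w)|$, some witness must be independently guaranteed to be totally real, so that a product really has the form $s\overline{s}$. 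You offer no positive-existential way to guarantee that. The fallback (``$x$ is fixed by conjugation iff $x^2$ and $x$ satisfy a suitable unit-based relation'') names the missing ingredient without supplying it. As written, nothing prevents a unit of infinite order from satisfying your $\Tor$.

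\textbf{The missing idea is the paper's source of totally real elements.} The existentially definable nonmaximal subring $R_3(A)=\ZZ+3A$ has totally real unit group (Proposition~\ref{prop:basic_Rm}):
\begin{itemize}
\item For $u\in R_3(A)^\times$, the quotient $u/\overline u$ is a root of unity in $R_3(A)$, hence $\pm1$.
\item $\overline u=-u$ would force $2u\in 3A$, which is impossible for a unit.
\end{itemize}
Pulling back along $t=(2u+1)/(u-1)$ gives a positive-existential totally real set $V$. It contains $\zeta+\zeta^{-1}$ whenever $\ord(\zeta)/\gcd(\ord(\zeta),3)$ exceeds $1$ and is not a prime power, because then $t-2$ and $t+1$ are units congruent mod $3$ (Lemma~\ref{lem:trace_in_V}). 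Your $u\overline u$ certificates are replaced by squares of elements of $V$. The conditions $x=s^2=4-r^2$ with $V(s)$ and $V(r)$ force $x$ totally into $(0,4)$, and $z^2-(x-2)z+1=0$ then admits only roots of unity.

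\textbf{This has a cost on the completeness side that your plan does not address.} To write $2+z+z^{-1}$ in both forms, the paper uses roots of unity $\eta,\xi\in A$ with $\eta^2=z$, $\xi^2=-z$ and admissible orders. Consequently:
\begin{itemize}
\item The paper only captures squares directly, via $\exists\omega\,(\omega^5=1\wedge M(\omega z^2))$.
\item It needs both $i$ and a primitive $5$th root of unity in the ring.
\item It then passes from $\OO_L$ to $\OO_L[X]/(f_{\bf c})$ with $f_{\bf c}\mid\Phi_{20}$ by Lemma~\ref{lem:interpretations}.
\item Soundness in the extension ring is kept because positive-existential formulas survive the surjection onto $\OO_L[\theta]$.
\end{itemize}
Your aim of finding all witnesses inside $\OO_L$ itself, uniformly for every $L\subseteq\Qtr(i)$, has no mechanism for fields that lack these auxiliary roots of unity.
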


To prove this theorem, we first use the notion of interpreting formulas in quotient rings (see Lemma~\ref{lem:interpretations}) to reduce to the case of defining the torsion subgroup of units within a ring $A\subseteq \Qtr(i)$ of algebraic integers that contains a primitive $20$-th root of unity.
Then, we define a nonmaximal subring $R_{3}(A) := \ZZ + 3A$ inside $A$, following the strategy of \cite{Springer24}, so that $R_{3}(A)^\times$ is totally real.
By deploying several formulas throughout Section~\ref{sec:foundational_formulas} on this totally real set of units, we eventually obtain an existentially definable totally real subset containing only elements of the form $2 + \zeta + \zeta^{-1}$, where $\zeta$ is a root of unity. 
This lets us define many roots of unity as the roots of polynomials of the form $X^2  - (\zeta + \zeta^{-1})X + 1 $.
Using these roots of unity as a starting point, we then define a set containing precisely all roots of unity.

\subsection{The construction of Mazur, Rubin and Shlapentokh}
Mazur, Rubin and Shlapentokh presented a construction that, given a ring of integers $\OO_L$ and a subgroup of units $M\subseteq \OO_L^\times$, provides a subring $R_{\OO_L, M}\subseteq \OO_L$; see \cite{MRS24}.
Moreover, if $M$ is definable in $\OO_L$, then so is $R_{\OO_L, M}$.
We apply the construction with $M = \mu(\OO_L)$ and prove that $R_{\OO_L,\mu(\OO_L)} = \ZZ$, as long as $\mu(L)$ contains infinitely many roots of unity of prime order; see Theorem~\ref{thm:identify_R}.
Moreover, by mildly generalizing their construction and interpreting bounded-degree extension rings over a base ring, we prove the following, which is a stronger version of Theorem~\ref{thm:main_Zab}.

\begin{theorem}[Theorem~\ref{thm:def_Z_in_O_general}]\label{thm:def_Z_in_O_general_intro}
    Fix an integer $D\geq 1$ and let $\mathcal F_D$ be the set of fields $L\subseteq \Qtr(i)$ for which there are infinitely many roots of unity $\zeta_p$ of prime order $p$ satisfying the bound $[L(\zeta_p) : L]\leq D$. There is a single parameter-free first-order formula which defines $\ZZ$ in $\OO_L$ for every field $L\in \mathcal F_D$.
    Therefore, all such $\OO_L$ have undecidable first-order theory.
\end{theorem}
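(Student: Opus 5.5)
The plan is to combine the uniform definition of roots of unity (Theorem~\ref{thm:def_muL_in_OOL_general_intro}) with the Mazur--Rubin--Shlapentokh construction, applied not in $\OO_L$ itself but in degree-$\le D$ extension rings interpreted over $\OO_L$.

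\textbf{Step 1: interpret the extension rings.} For $L\in\mathcal F_D$ I quantify over tuples $(a_0,\dots,a_{d-1})\in\OO_L^d$ with $d\le D$ that give a monic polynomial $f$ of degree $d$. I then interpret the ring $\OO_L[x]/(f)$ on $\OO_L^d$ by coordinatewise addition and multiplication reduced modulo $f$; these operations are polynomial in the coefficients, hence first-order. When $f$ is the minimal polynomial over $L$ of $\zeta_p$, or of $\zeta_p$ times a fixed root of unity of $L$, this ring is $\OO_L[\zeta_p]$. This ring lies in $L(\zeta_p)\subseteq\Qtr(i)$, because $\Qtr(i)$ contains all roots of unity and is closed under compositum. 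So Theorem~\ref{thm:def_muL_in_OOL_general_intro} applies to the field $L(\zeta_p)$. The formula $\Tor$ is positive-existential, and its witnesses in the order $\OO_L[\zeta_p]$ may need to lie in the full ring of integers $\OO_{L(\zeta_p)}$. To handle this I interpret $\OO_{L(\zeta_p)}$ itself as a module over $\OO_L$ with parameters from $\OO_L$. Alternatively I can use integrality: an element is in $\OO_{L(\zeta_p)}$ exactly when it is of the form $g(x)/c$ with $c\in\OO_L\setminus\{0\}$ and it satisfies a monic equation of degree $\le D$ over $\OO_L$. That condition is first-order over the interpreted field $L[x]/(f)$, and $L$ itself is interpretable as the fractions of $\OO_L$. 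In this way I obtain a uniform first-order definition of $\mu(\OO_{L(\zeta_p)})$ inside each interpreted extension, quantified over the parameters $f$.

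\textbf{Step 2: run the generalized MRS construction and identify its output.} Inside each interpreted $\OO_{L(\zeta_p)}$ I run the generalized MRS construction with $M=\mu(\OO_{L(\zeta_p)})$. This produces a subring $R_f$, and I intersect it with $\OO_L$, embedded as the constants. Theorem~\ref{thm:identify_R} says $R_{\OO_K,\mu(\OO_K)}=\ZZ$ whenever $\mu(K)$ contains infinitely many roots of unity of prime order. That hypothesis fails for a single $K=L(\zeta_p)$, so I must use the mild generalization the paper describes. The defining condition of $R$ uses a large supply of elements $\zeta_p-1$ of distinct prime orders, and this supply is now spread across the varying interpreted extensions. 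I would define $x\in\OO_L$ to lie in $R$ iff for every admissible $f$ defining some $\OO_L[\zeta_p]$, the MRS-style congruence conditions on $x$ hold modulo $\zeta_p-1$ in that extension. Since $\OO_L[\zeta_p]/(\zeta_p-1)\cong\OO_L/(p')$ for primes $p'\mid p$, each condition forces $x$ to be congruent to a rational integer modulo primes above $p$. This holds for infinitely many $p$ because $L\in\mathcal F_D$.

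\textbf{Step 3 (main obstacle): show the resulting set is exactly $\ZZ$.} The inclusion $\ZZ\subseteq R$ should be routine. For the reverse inclusion I would adapt the proof of Theorem~\ref{thm:identify_R}: take an element satisfying the conditions and show it is congruent to an integer modulo primes above infinitely many $p$, with uniformly bounded size. A norm or height bound then forces it into $\ZZ$. I expect the difficulty to be the \emph{uniformity}, namely producing a single formula for all $L\in\mathcal F_D$ with $d$ ranging up to $D$ and no dependence on $L$. I would handle this by taking a disjunction over $d\le D$ and recognizing "$f$ has a root of unity of prime order as a root" through $\Tor$ together with the condition $x^p=1$, $x\ne1$. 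The exponent $p$ is not first-order, so I would instead require the root $\zeta$ of $f$ to satisfy $\Tor(\zeta)$, $\zeta\neq1$, and to generate the interpreted ring. Finally, $\ZZ$ has undecidable theory and is defined in each such $\OO_L$, so undecidability follows.
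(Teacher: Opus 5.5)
Your architecture matches the paper's: interpret bounded-degree extensions over $\OO_L$, run the MRS construction with torsion units there, quantify over all parameters, and exploit infinitely many $p$. The proposal breaks at the step you call routine, $\ZZ\subseteq R$.

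Your formula says that for every admissible $f$, $x$ lies in the MRS ring of the interpreted extension. Integers satisfy this only if, for \emph{every} parameter tuple passing your test, the set $M$ cut out by $\Tor$ in the interpreted ring is a subgroup of its unit group. Proposition~\ref{prop:general_R_AM} only yields $\ZZ\subseteq R_{A,M}$ when $M$ is a group; its proof uses $1\in M$ and closure under inverses.

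Your test cannot certify this. The test is that the root $\zeta$ of $f$ (the class of the indeterminate) satisfies $\Tor$, is $\neq 1$, and generates the ring. But $\Tor$ is only known to define $\mu(\OO_K)$ for $K\subseteq\Qtr(i)$, and whether the parameter produces such a ring is exactly what is being tested, so the test is circular. An arbitrary tuple in $\OO_L^d$ may give a reducible or non-squarefree $f$. Then $L[x]/(f)$ is not a field and your integral-closure interpretation is not a domain. It may also give an irreducible $f$ whose root generates a field outside $\Qtr(i)$. On such structures nothing controls what $\Tor$ defines, so integers may fail your condition. The generation clause adds nothing, since the class of $X$ always generates $\OO_L[X]/(f)$.

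The missing idea is to guard semantically rather than recognize good parameters syntactically. Set $Z_D(x)$ to be $\bigwedge_{d}\forall{\bf c}\,\bigl(\Group^{(d)}({\bf c})\to x\in R_{A_{\bf c},M_{\bf c}}\bigr)$, where $\Group^{(d)}({\bf c})$ is the first-order statement that the defined set $M_{\bf c}$ is a subgroup of $A_{\bf c}^\times$. Then $\ZZ\subseteq R_{A_{\bf c},M_{\bf c}}$ for every guarded ${\bf c}$, whatever ring it produces. You only need the guard to hold at the good parameters. The degrees must enter as this conjunction, not as a disjunction of universal statements, which could hold vacuously at a degree with no admissible parameters.

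The paper also avoids interpreting $\OO_{L(\zeta_p)}$. It takes ${\bf c}$ to be the minimal polynomial of $\zeta_{20p}$ over $L$, of degree at most $8D$. Then $A_{\bf c}\cong\OO_L[\zeta_{20p}]$ already contains $\zeta_{20}$, and Theorem~\ref{thm:def_muL_in_OOL_if_20th} applies directly to this non-maximal order, giving $M_{\bf c}=\mu(\OO_L[\zeta_{20p}])$. Your integral-closure route can also be made to work once guarded, but it is heavier.

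Two smaller points. First, Theorem~\ref{thm:identify_R} needs no infinitude hypothesis to give $R_{A_{\bf c},M_{\bf c}}\subseteq\ZZ+(\zeta_p-1)A_{\bf c}$. Second, your closing ``norm or height bound'' should be Lemma~\ref{lem:div_by_p}. A conjugate $\sigma(x)\neq x$ satisfies $x-\sigma(x)\in(\zeta_p-1)$, so $p$ divides the discriminant of the minimal polynomial of $x$ for infinitely many $p$. This uses congruence to one integer modulo the whole ideal $(\zeta_p-1)$. Congruence modulo a single prime above $p$ would not suffice, since every algebraic integer is congruent to an integer modulo infinitely many degree-one primes.
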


For comparison, recall that an algebraic extension $L$ of $\QQ$ is called \emph{big} if, for every positive integer $n$, there is a number field $F$ inside $L$ such that $n$ divides $[F : \QQ]$; see \cite{MR20}. The main result of Mazur, Rubin and Shlapentokh \cite[Thm.~4.8]{MRS24} states  that $\ZZ$ is first-order definable in the ring of integers of every algebraic extension of $\QQ$ which is not big. 
In~contrast, big fields like $\Qab$ are the central motivation of our results here.

\subsection{The blueprints of Robinson and Henson}

We can also deploy the definability of the torsion subgroup of units with a construction due to C.W. Henson \cite{vandenDries} which generalizes a construction of J. Robinson \cite{Robinson62}.
In particular, there is the following blueprint for proving undecidability for a ring of algebraic integers $A$: If there is a parameterized family of definable subsets of $A$ containing finite sets of arbitrarily large size, then there is an interpretation of R.M. Robinson's $\mathbf Q$, a weak essentially undecidable arithmetic theory, in the ring $A$.
Many results have been built on this well-known blueprint; see \cite{MRUV20, Shlapentokh18, Springer24}, to name just a few.
J. Robinson's version of the blueprint also underlies the concept of $\JR$-numbers, as discussed in the following subsection.

In Section~\ref{sec:Blueprint_Henson_and_Robinson}, our parameterized family of sets will contain finite sets corresponding to roots of unity of various prime-power orders.
If the ring in question does not include the necessary roots of unity, then we interpret extension rings of bounded degree over the base ring, which is sufficient for our purposes. 
Note that Theorem~\ref{thm:undec_intro} has less restrictive hypotheses than Theorem~\ref{thm:def_Z_in_O_general_intro} because the roots of unity $\zeta$ do not need to have prime order.

\begin{theorem}[Theorem~\ref{thm:undec}]
\label{thm:undec_intro}
    Let $L\subseteq \Qtr(i)$.
    If there exists a $D \geq 1$ so that $[L(\zeta) :L]\leq D$ for infinitely many roots of unity $\zeta$, then the first-order theory of $\OO_L$ is undecidable.
\end{theorem}

\subsection{Julia Robinson's numbers}

We remark that Theorem~\ref{thm:undec_intro} does not hypothesize about whether the field is totally real or totally imaginary.
To fit a format closer to previous theorems, like \cite[Theorem~1.4]{Springer24}, we can phrase the result in terms of totally imaginary quadratic extensions of totally real fields with a fixed $\JR$-number; see Section~\ref{sec:JR} for a definition and discussion of relevant background.

\begin{theorem}
\label{thm:undec_JR_intro}
	 Let $K$ be a totally real field for which $\JR(\OO_K) = 4$ is an attained minimum.
	 For every totally imaginary quadratic extension $L/K$, the first-order theory of $\OO_L$ is undecidable.
\end{theorem}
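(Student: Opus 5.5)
Throughout I write $\zeta_m$ for a primitive $m$-th root of unity. The plan is to reduce Theorem~\ref{thm:undec_JR_intro} to Theorem~\ref{thm:undec_intro}: it suffices to find a $D$ such that $[L(\zeta):L]\le D$ for infinitely many roots of unity $\zeta$. Since $L$ is a totally imaginary quadratic extension of the totally real field $K$, we have $L\subseteq\Qtr(i)$. So the whole task is to extract infinitely many roots of unity of bounded degree over $L$ from the hypothesis that $\JR(\OO_K)=4$ is an attained minimum.

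The key input is the meaning of this hypothesis (as recalled in Section~\ref{sec:JR}). $\JR(\OO_K)=4$ attained means there are infinitely many $\alpha\in\OO_K$ such that every conjugate of $\alpha$ lies in $(0,4)$, or in the closed interval $[0,4]$ under the paper's normalization. By Kronecker's theorem, an algebraic integer all of whose conjugates lie in $[0,4]$ has the form $2+\zeta+\zeta^{-1}$ with $\zeta$ a root of unity. I would derive this from the substitution $\alpha-2=\beta+\beta^{-1}$: every conjugate of $\beta$ then lies on the unit circle, and $\beta$ is an algebraic integer. Hence $\OO_K$ contains infinitely many distinct elements $\zeta+\zeta^{-1}$, which forces infinitely many distinct roots of unity $\zeta$. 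Each such $\zeta$ is a root of
\[
X^2-(\zeta+\zeta^{-1})X+1\in \OO_K[X],
\]
so $[K(\zeta):K]\le 2$ and therefore $[L(\zeta):L]\le 2$. Applying Theorem~\ref{thm:undec_intro} with $D=2$ then gives undecidability of the first-order theory of $\OO_L$.

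The main obstacle is pinning down exactly what ``attained minimum'' guarantees, since it depends on the definition in Section~\ref{sec:JR}. I expect it to mean that the infimum $4$ of the set of $t$ for which $\{\alpha\in\OO_K : 0\ll\alpha\ll t\}$ is infinite is itself attained, so that infinitely many totally positive $\alpha$ satisfy $\alpha\ll 4$. A possible subtlety concerns the endpoints $0$ and $4$, which correspond to $\zeta=\pm1$. These account for only finitely many elements, so they do not affect the argument. Once this reading is confirmed, the Kronecker step is classical and the remaining steps are immediate.
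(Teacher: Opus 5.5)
Your proposal is correct and matches the paper's proof (Corollary~\ref{cor:undec_for_JR_min}). In both, attainment gives infinitely many $\alpha\in\OO_K$ with $0\ll\alpha\ll 4$. Kronecker (Lemma~\ref{lem:traces_in_0_4}) turns these into infinitely many $\zeta+\zeta^{-1}\in K$, so $[L(\zeta):L]\le 2$, and Theorem~\ref{thm:undec} applies with $D=2$. Your endpoint worry is moot, since the paper defines $0\ll\alpha\ll t$ using the open interval $(0,t)$.
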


In particular, we observe that Theorem~\ref{thm:undec_JR_intro} completely subsumes the undecidability result \cite[Thm.~1.4]{Springer24}, although the existential definability of the totally real subring of integers \cite[Thm.~1.3]{Springer24} remains notable compared to the first-order (non-existential) definitions of $\ZZ$ in this paper.
We remark that Theorem~\ref{thm:undec_JR_intro} is also true if $K$ instead satisfies the condition $\JR(\OO_K) = \infty$; see Theorem~\ref{thm:undec_JR_infty}.

\subsection{AI declaration}
The author used ChatGPT 5.6 Sol to perform mathematical exploration and to obtain feedback on draft versions of this paper.
All arguments have been manually checked and written in the author's own language, and the author takes full responsibility for the material in this paper.

\section{Preliminaries}\label{sec:prelim}

To start, we define some notation and recall basic material.
As usual, all rings are commutative and contain $1$, subrings share $1$ with their ambient ring, and formulas are in the language of rings, $\mathcal L_{\text{ring}} = \{0,1,+,\cdot\}$.
Fix an algebraic closure $\Qbar$ of $\QQ$ and view algebraic extensions of $\QQ$ inside $\Qbar$.
For a subfield $L\subseteq \Qbar$, we write $\OO_L$ for the ring of all algebraic integers in $L$ and call $\OO_L$ the ring of integers of $L$.
If $A\subseteq \OO_L$ is a ring which is not necessarily maximal, then we simply call $A$ a ring of algebraic integers.
If $\zeta$ is a root of unity of order $n$, then we say $\zeta$ is a primitive $n$-th root of unity and write $\ord(\zeta) = n$.

We recommend  \cite{Washington} for general background on cyclotomic fields and CM fields.
If $K\subseteq \Qbar$ is totally real and $L/K$ is a totally imaginary quadratic extension, then there is a (unique) automorphism $\tau \in \Gal(L/K)$ such that $\sigma(\tau(x)) = \overline{\sigma(x)}$ for every embedding $\sigma : L\hookrightarrow \CC$ where $z\mapsto \overline z$ denotes complex conjugation in $\CC$.
In particular, if $L/\QQ$ is Galois, then $\tau$ is central in $\Gal(L/\QQ)$.
These facts in the case when $[L : \QQ] < \infty$ are well-known (see \cite[p.39]{Washington}), and the argument is precisely the same for the infinite-degree case.
As in the finite-degree case, we refer to the automorphism $\tau$ as complex conjugation and also write $\overline x:= \tau(x)$.

Write $\Qtr\subseteq \Qbar$ for the field of all totally real numbers and write $i = \sqrt{-1}$. We will use the condition that $L$ is a subfield of $\Qtr(i)$ as interchangeable with the condition that $L$ is either totally real or a totally imaginary quadratic extension of a totally real field.
Although this is seemingly well-known, we add a quick proof here for clarity.

\begin{lemma}
    If $L$ is an algebraic extension of $\QQ$, then $L$ is contained in $\Qtr(i)$ if and only if $L$ is either totally real or a totally imaginary quadratic extension of a totally real field. More precisely, if $L\subseteq \Qtr(i)$ is not totally real, then $L^+ := \Qtr \cap L$ is its maximal totally real subfield and $[L : L^+] = 2$.
\end{lemma}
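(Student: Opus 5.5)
We prove the two directions separately, and obtain the refined statement about $L^+$ along the way.

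For the forward direction, suppose $L\subseteq \Qtr(i)$ and $L$ is not totally real, and put $L^+ := \Qtr\cap L$. This is a subfield of $L$, and it is totally real because every element of $\Qtr$ is totally real. It is also the maximal totally real subfield of $L$: any totally real subfield of $L$ lies in $\Qtr$, hence in $\Qtr\cap L = L^+$. Next, $\Qtr(i)/\Qtr$ has degree $2$, since $i\notin\Qtr$. Hence $\Qtr(i) = \Qtr\oplus i\Qtr$, and every $x\in L$ can be written uniquely as $x = a+bi$ with $a,b\in\Qtr$.

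The goal is to show $[L:L^+]\le 2$. The cleanest route is Galois-theoretic. Both $\Qtr$ and $\Qtr(i)$ are Galois over $\QQ$, because the property of being totally real is stable under conjugation. So $\Gal(\Qtr(i)/\Qtr)=\{1,c\}$, where $c$ restricts to complex conjugation under every embedding. Since $\Qtr(i)/\QQ$ is Galois, this $c$ is central in $\Gal(\Qbar/\QQ)$ modulo the kernel. The central fact to verify is that $c(L)=L$. Take $x\in L$ and any embedding $\sigma$ of $\Qtr(i)$ into $\CC$. Then $\sigma(c(x)) = \overline{\sigma(x)}$, so $c(x)$ is given by the formula $c(x)=a-bi$. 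To conclude $c(x)\in L$, I would argue via the degree: $L\cap\Qtr = L^+$, and $L\subseteq L^+(\,\cdot\,)$.

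More concretely, consider the restriction map. The field $L\Qtr$ lies between $\Qtr$ and $\Qtr(i)$, so $L\Qtr=\Qtr(i)$, because $L\not\subseteq\Qtr$. The standard Galois correspondence for the compositum then gives
\[
\Gal(\Qtr(i)/\Qtr)\cong \Gal(L/L\cap\Qtr).
\]
The main obstacle is that this requires one of the fields to be Galois over the intersection. Here $\Qtr(i)/\Qtr$ is Galois, so we may apply the theorem in the form: if $M/K$ is Galois and $L$ is any extension of $K$, then $\Gal(ML/L)\cong\Gal(M/M\cap L)$. We use it with $K=\QQ$, but it is best applied via the base $\Qtr$ and the element $i$. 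Concretely, take $M=L^+(i)$, which is Galois of degree $\le 2$ over $L^+$. I expect to show $L = L^+(\alpha)$ for some $\alpha$ with $[L:L^+]=2$ using the translation theorem: $\Qtr(i)=\Qtr\cdot L$, and $\Qtr(i)/\Qtr$ is Galois, so $[L:L\cap\Qtr]=[L\Qtr:\Qtr]=2$. This equality of degrees for a finite Galois extension $L\Qtr/\Qtr$ and arbitrary $L$ is the natural irrationalities theorem. It applies to finite subextensions, and then passes to the limit by taking finitely generated subfields.

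Given $[L:L^+]=2$, the totally imaginary part follows quickly. If some embedding $\sigma$ of $L$ were real, then $\sigma$ would fix the generator $x=a+bi$ (with $b\ne0$) under conjugation. Extending $\sigma$ to $\Qtr(i)$, the image $\sigma(a)+\sigma(b)\sigma(i)$ would then be real. But $\sigma(a)$ and $\sigma(b)$ are real and $\sigma(b)\neq 0$, while $\sigma(i)=\pm i$, a contradiction. Hence $L$ is a totally imaginary quadratic extension of the totally real field $L^+$.

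For the converse, a totally real $L$ lies in $\Qtr$. If instead $L/K$ is totally imaginary quadratic with $K$ totally real, write $L=K(\sqrt{-\delta})$. Every conjugate of $-\delta$ is negative, so $\delta$ is totally positive. Then $\sqrt{\delta}$ is totally real, so $\sqrt{-\delta}=i\sqrt\delta\in\Qtr(i)$, and therefore $L\subseteq\Qtr(i)$.
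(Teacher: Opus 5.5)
\paragraph{There is a genuine gap at the step $[L:L^+]=2$.} Your converse direction is fine, and so is the maximality of $L^+=\Qtr\cap L$. Your argument that $L$ is totally imaginary also works: for $x=a+bi\in L$ with $a,b\in\Qtr$ and $b\neq 0$, every embedding $\sigma$ gives $\sigma(x)=\sigma(a)\pm\sigma(b)i\notin\RR$.

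The problem is the one step that carries the real content, namely $[L:L^+]=2$. Your first attempt, proving $c(L)=L$, breaks off mid-sentence. Your replacement derives $[L:L\cap\Qtr]=[L\Qtr:\Qtr]$ from the fact that $L\Qtr/\Qtr$ is Galois, and that is not what natural irrationalities gives. If $K/k$ is Galois and $F/k$ is arbitrary, restriction gives $\Gal(KF/F)\cong\Gal(K/K\cap F)$. So the degree over the intersection that you control is that of the Galois factor $K$, not of the other factor. To get your equality you would need $L$ itself to be Galois over a subfield of $L\cap\Qtr$, which is not available at this stage.

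Galois-ness of the compositum over the other factor does not suffice. Take $F=\QQ(\sqrt[3]{2})$ and $L'=\QQ(\omega\sqrt[3]{2})$ with $\omega$ a primitive cube root of unity. Then $L'F=\QQ(\sqrt[3]{2},\omega)$ is quadratic, hence Galois, over $F$, yet $[L':L'\cap F]=[L':\QQ]=3\neq 2$.

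\paragraph{How to close it.} The missing idea is one you wrote down and then did not use: complex conjugation $c$ is central in $\Gal(\Qtr(i)/\QQ)$. The reason is not just that $\Qtr(i)/\QQ$ is Galois. Every $g$ in this group preserves $\Qtr$ and sends $i\mapsto\pm i$, so $gc$ and $cg$ agree on $\Qtr$ and on $i$.

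Centrality means $c$ normalizes $H=\Gal(\Qtr(i)/L)$, so $c(L)=L$. Since $L\not\subseteq\Qtr$, the restriction $c|_L$ is an automorphism of order $2$ whose fixed field is $L\cap\Qtr=L^+$. Hence $[L:L^+]=2$, which is exactly the paper's argument.

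Alternatively, you can salvage your translation approach by applying it to the Galois extension $\Qtr/\QQ$ instead. Restriction then gives an isomorphism $\Gal(\Qtr(i)/L)\cong\Gal(\Qtr/L^+)$. Meanwhile, restriction $\Gal(\Qtr(i)/L^+)\to\Gal(\Qtr/L^+)$ is surjective with kernel $\{1,c\}$. So $\Gal(\Qtr(i)/L)$ is a complement of $\{1,c\}$ in $\Gal(\Qtr(i)/L^+)$ and has index $2$, i.e.\ $[L:L^+]=2$.
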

\begin{proof}
    If $L$ is totally real, then $L\subseteq \Qtr$ by definition.
    If $L$ is a totally imaginary quadratic extension of a totally real field $L^+$, then $L = L^+(\sqrt{a})$ for $a\in L^+$ which is totally negative. But then $-a$ is totally positive, so $i\sqrt{a} = \sqrt{-a}\in \Qtr$. Hence $L\subseteq L^+(i, \sqrt{-a})\subseteq \Qtr(i)$.

    Conversely, let $L\subseteq \Qtr(i)$. 
    Because $\Qtr$ and $\QQ(i)$ are both Galois extensions of $\QQ$, it follows that $\Qtr(i)$ is as well. 
    Let $H = \Gal(\Qtr(i)/L)$.
    Because complex conjugation $\tau$, as described above, is central in the Galois group $\Gal(\Qtr(i)/\QQ)$,  we deduce $\tau H \tau^{-1} = H$.
    By the Galois correspondence, this means that 
    $$
        \tau(L) = \tau(\Qtr(i)^H) = \Qtr(i)^{\tau H \tau^{-1}}= \Qtr(i)^H = L.
    $$
    Therefore, $L$ is stable under complex conjugation.
    If $L$ is fixed pointwise by $\tau$, then it is totally real and we are done, so assume $x\neq \tau(x)$ for some $x\in L$.
    Because $\sigma(x) \neq \sigma(\tau(x)) = \overline{\sigma(x)}$ for every embedding $\sigma: L\hookrightarrow \CC$, we deduce that every embedding of $x$ is non-real. Thus,  $L$ is totally imaginary.
    Finally, by Galois theory, the subfield $L^+$ of $L$ fixed by $\tau$ satisfies $[L : L^+] =2$ and $L^+ = \Qtr \cap L$.
\end{proof}

Write $\Phi_n(X)\in \ZZ[X]$ for the $n$-th cyclotomic polynomial, namely the irreducible polynomial of degree $\varphi(n)$ whose roots are precisely the primitive $n$-th roots of unity.
We repeatedly use the following well-known lemma, so we list it explicitly.
\begin{lemma}
\label{le:cycl_norm}
If $\zeta$ is a primitive $n$-th root of unity and $n \neq 1$, then
$$
	| N_{\QQ(\zeta)/\QQ}(1 - \zeta)| = |\Phi_n(1)| = \begin{cases}
		p & \text{ if } n = p^a \text{ for some prime } p;\\
		1 & \text{ otherwise}.
	\end{cases}
$$
In particular, $1 - \zeta$ is a unit if and only if $n$ is not a prime power.
\end{lemma}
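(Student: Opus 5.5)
The plan is to compute $\Phi_n(1)$ directly from the factorization $X^n - 1 = \prod_{d\mid n}\Phi_d(X)$ and then read off the norm. Since $\Phi_n$ is the minimal polynomial of $\zeta$ over $\QQ$ and is monic of degree $\varphi(n)$, its roots are exactly the Galois conjugates of $\zeta$. Hence
$$
N_{\QQ(\zeta)/\QQ}(1-\zeta) = \prod_{\sigma}\bigl(1-\sigma(\zeta)\bigr) = \Phi_n(1).
$$
This gives the first equality, and it remains to evaluate $|\Phi_n(1)|$.

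First divide $X^n-1 = \prod_{d\mid n}\Phi_d(X)$ by $X-1 = \Phi_1(X)$. This gives
$$
1+X+\cdots+X^{n-1} = \prod_{d\mid n,\, d>1}\Phi_d(X),
$$
and evaluating at $X=1$ yields
$$
n = \prod_{d\mid n,\, d>1}\Phi_d(1).
$$

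Next handle prime powers. Write $n=p^a$. Either apply the identity above inductively, or note directly that $\Phi_{p^a}(X) = \Phi_p\bigl(X^{p^{a-1}}\bigr) = 1 + X^{p^{a-1}} + \cdots + X^{(p-1)p^{a-1}}$. Either way $\Phi_{p^a}(1)=p$.

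For general $n>1$ with prime factorization $n=\prod p_j^{a_j}$, the divisors $d>1$ that are prime powers contribute $\prod_j p_j^{a_j} = n$ to the product. Hence the product of $\Phi_d(1)$ over the remaining divisors $d>1$, those that are not prime powers, equals $1$. Proceed by strong induction on $n$: for every proper such divisor $d$ we already know $\Phi_d(1)=\pm1$, so $\Phi_n(1)$ itself must be $\pm 1$. In fact $\Phi_d(1)>0$ for $d>1$, since $\Phi_d$ has no real roots in $[1,\infty)$ and is monic, though only absolute values are needed here.

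The final claim follows because $1-\zeta$ is an algebraic integer, and an algebraic integer is a unit if and only if its norm is $\pm1$. The only step requiring any care is this induction separating prime-power divisors from the rest, and it is routine.
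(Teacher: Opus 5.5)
Your proof is correct. The paper gives no proof of this lemma; it simply records it as well known, with Washington's book as the general reference. So there is no competing argument to compare against, and yours is the standard derivation.

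The first equality uses the irreducibility of $\Phi_n$, which the paper builds into its definition of $\Phi_n$. Irreducibility guarantees that the embeddings of $\QQ(\zeta)$ send $\zeta$ exactly to the roots of $\Phi_n$. The value of $\Phi_n(1)$ then comes from evaluating $(X^n-1)/(X-1)=\prod_{d\mid n,\,d>1}\Phi_d(X)$ at $X=1$ and separating off the prime-power divisors.

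One small simplification: the strong induction is unnecessary. The values $\Phi_d(1)$ are integers, and a product of integers equal to $1$ forces every factor to be $\pm1$. In particular this gives $\Phi_n(1)=\pm1$ directly when $n>1$ is not a prime power.
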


The following lemma is a useful characterization of the totally real elements that we will need to define prior to obtaining roots of unity themselves.
This characterization is also well-known and due to Kronecker \cite{Kronecker1857}. 
We refer  the reader to the proof provided in \cite[Lemma~2.1]{DKMWY25} for modern exposition.
\begin{lemma}\label{lem:traces_in_0_4}
    Let $\alpha$ be a totally real algebraic integer. If all conjugates of $\alpha$ lie in the interval $(0,4)$, then $\alpha = \zeta + \zeta^{-1} + 2$ for some root of unity $\zeta$.
\end{lemma}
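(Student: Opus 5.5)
The plan is the classical Kronecker argument: produce a monic integer polynomial whose roots are all conjugates of $\alpha$. Then show that only finitely many such polynomials occur as we pass to a suitable family of related algebraic integers. Finiteness forces periodicity, and periodicity forces roots of unity.

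\emph{Step 1: normalize $\alpha$.} Set $\beta = \alpha - 2$. Then $\beta$ is a totally real algebraic integer whose conjugates $\beta_1,\dots,\beta_d$ all lie in $(-2,2)$. Let $\gamma$ be a root of $X^2 - \beta X + 1$, so that $\gamma + \gamma^{-1} = \beta$ and $\gamma$ is an algebraic integer; note $\gamma$ is a unit since the constant term is $1$. The conjugates of $\gamma$ over $\QQ$ are among the roots of $X^2 - \beta_j X + 1$ for $j=1,\dots,d$. Because $|\beta_j| < 2$, the discriminant $\beta_j^2 - 4$ is negative. Hence these roots are complex conjugates with product $1$, so every conjugate of $\gamma$ has absolute value exactly $1$. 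If we show that $\gamma$ is a root of unity $\zeta$, then $\alpha = \zeta + \zeta^{-1} + 2$, as required.

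\emph{Step 2: the Kronecker finiteness argument.} Let $\gamma_1,\dots,\gamma_m$ ($m\le 2d$) be the conjugates of $\gamma$, all on the unit circle. For each $k\ge 1$, the polynomial $P_k(X) = \prod_{j=1}^m (X - \gamma_j^k)$ has coefficients that are symmetric functions of the $\gamma_j$. These coefficients are fixed by Galois and are algebraic integers, so $P_k \in \ZZ[X]$. Its coefficient of $X^{m-r}$ is bounded in absolute value by $\binom{m}{r}$, since $|\gamma_j^k| = 1$. There are therefore only finitely many possible $P_k$, so the set $\{\gamma^k : k\ge 1\}$, consisting of roots of the $P_k$, is finite. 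Consequently $\gamma^k = \gamma^l$ for some $k<l$. Since $\gamma\neq 0$, this gives $\gamma^{l-k}=1$, so $\gamma = \zeta$ is a root of unity.

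\emph{Main obstacle.} The only delicate point is in Step 1: the \emph{open} interval is what guarantees the discriminant is strictly negative, placing every conjugate of $\gamma$ on the unit circle. This is the hypothesis on which the bound in Step 2 depends. The endpoint cases $\beta_j = \pm 2$ would also give $|\gamma_j|=1$, so nothing is lost there. One must also check that every Galois conjugate of $\gamma$ genuinely satisfies $X^2 - \beta_j X + 1$ for some $j$. This follows by applying an embedding to the relation $\gamma^2 - \beta\gamma + 1 = 0$. Everything else is routine bookkeeping with symmetric functions.
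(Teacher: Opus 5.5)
Your proof is correct and is the standard proof of this result. The paper gives no proof of its own: it attributes the lemma to Kronecker \cite{Kronecker1857} and refers to \cite[Lemma~2.1]{DKMWY25}, and your route (write $\alpha-2=\gamma+\gamma^{-1}$, note that every conjugate of $\gamma$ lies on the unit circle, then apply Kronecker's finiteness argument for monic integer polynomials with bounded coefficients) is the classical argument behind that citation.

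One small inconsistency is in your closing remark, which first calls the openness of the interval essential and then correctly notes that the endpoints cause no trouble. In fact the argument works verbatim for conjugates in $[0,4]$, so openness plays no role.
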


Finally, we say that a subset $S\subseteq\Qtr(i)$ is stable under complex conjugation if $\overline x\in S$ whenever $x\in S$. We record the fact that extensions by roots of unity behave nicely with respect to complex conjugation. 

\begin{lemma}
    If $A$ is a ring of algebraic integers which is stable under complex conjugation and $\zeta$ is a root of unity, then $A[\zeta]$ is also stable under complex conjugation. In particular, $\OO_L[\zeta]$ is stable under complex conjugation for every $L\subseteq \Qtr(i)$ and every root of unity $\zeta$.
\end{lemma}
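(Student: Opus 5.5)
The plan is to work directly with the description of complex conjugation as the automorphism $\tau$ recalled above. The complication is that $A$ is only a ring of algebraic integers stable under conjugation, so it need not sit inside a field with a single well-defined $\tau$. I will therefore fix the following setup. Stability under complex conjugation means: viewing $A\subseteq\Qbar\subseteq\CC$ (via the fixed embedding), we have $\overline{x}\in A$ for all $x\in A$, where the bar denotes the restriction of ordinary complex conjugation on $\CC$.

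With this setup, let $c$ denote complex conjugation on $\CC$; it restricts to a field automorphism of $\Qbar$, hence a ring homomorphism. First, every element of $A[\zeta]$ can be written as $\sum_{k=0}^{m} a_k\zeta^k$ with $a_k\in A$. Applying $c$ gives $\sum_k \overline{a_k}\,\overline{\zeta}^{\,k}$. By hypothesis each $\overline{a_k}$ lies in $A$. Since $|\zeta|=1$, we have $\overline{\zeta}=\zeta^{-1}=\zeta^{n-1}$, where $n=\ord(\zeta)$, so $\overline{\zeta}\in \ZZ[\zeta]\subseteq A[\zeta]$. Hence the conjugate is again in $A[\zeta]$. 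This is the whole first statement; the only point to emphasize is that $\zeta^{-1}$ is a polynomial in $\zeta$, so no inversion is needed.

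For the ``in particular'' part, I will show that $\OO_L$ is stable under complex conjugation when $L\subseteq\Qtr(i)$. By the preceding lemma, either $L$ is totally real, or $L$ is stable under $\tau$, with $\tau$ agreeing with $c$ under every embedding, in particular the fixed one. In the totally real case $c$ fixes $L$ pointwise. In the other case $c(L)=\tau(L)=L$; this was shown in the proof of that lemma via centrality of $\tau$ in $\Gal(\Qtr(i)/\QQ)$. Since $c$ maps algebraic integers to algebraic integers (same minimal polynomial), $c(\OO_L)\subseteq \OO_L$. The first part then applies with $A=\OO_L$.

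There is no real obstacle here. The only care needed is to check that the restriction of analytic complex conjugation to $L$ coincides with the abstract $\tau$, and this is immediate from $\sigma\circ\tau=c\circ\sigma$ applied to the inclusion embedding $\sigma$.
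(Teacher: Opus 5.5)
Your proof is correct and is essentially the paper's argument: the paper's entire proof is the observation $\overline{\zeta}=\zeta^{-1}=\zeta^{\ord(\zeta)-1}\in A[\zeta]$, implicitly combined with conjugation being a ring homomorphism. Your extra check that $\OO_L$ is itself stable under conjugation is sound, and it fills in a step the paper leaves unstated; it uses $\tau(L)=L$ from the preceding lemma together with the fact that conjugation preserves integrality.
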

\begin{proof}
    This follows immediately from the fact that $\overline \zeta = \zeta^{-1} = \zeta^{\ord(\zeta) -1}\in A[\zeta]$.
\end{proof}

\section{Existential formulas and roots of unity}
\label{sec:foundational_formulas}

Throughout this section, let $A$ be a ring of algebraic integers in $\Qtr(i)$ which is stable under complex conjugation, and let $L$ be the field of fractions of $A$. 
As in previous work \cite{Springer24}, we need to use nonmaximal subrings to (temporarily) avoid roots of unity, because this lets us focus on the totally real elements.
Define the subring 
$$
    R_m(A) := \ZZ + mA.
$$
We observe that $R_m(A)$ and $R_m(A)^\times$ are both existentially definable in $A$. Indeed for $x\in A$:
\begin{align*}
    x\in R_m(A) &\iff  \bigvee_{a = 0}^{m-1} (\exists b \ x = a + mb);\\
    x\in R_m(A)^\times &\iff  x\in R_m(A) \wedge ( \exists u\in R_m(A), \ xu = 1).
\end{align*}

The following proposition gives useful properties of this subring $R_m(A)$.
The essence of this proposition is the same as \cite[\S2]{Springer24}, except that our prior work restricts to the case where $A = \OO_L$ is maximal. 
The proof in the more general setting is conceptually the same, and it is included to make this paper more self-contained.

\begin{prop}
	\label{prop:basic_Rm}
    If $m\geq 2$, then the following are true.
	\begin{enumerate}[(a)]
	    \item $\mu(R_m(A)) = \{\pm1\}$;
        \item Every $u\in R_m(A)^\times$ satisfies $\overline u = \pm u$;
        \item If $m\geq 3$, then every $u\in R_m(A)^\times$ satisfies $\overline u = u$.
	\end{enumerate}
\end{prop}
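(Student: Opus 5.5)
For (a), take a root of unity $\zeta\in R_m(A)$ and write $\zeta = a + m\beta$ with $a\in\ZZ$ and $\beta\in A$. Let $n=\ord(\zeta)$, and suppose for contradiction that $\zeta\neq\pm1$. The plan is to show that $1-\zeta$ or $1+\zeta$ is divisible by $m$ in a way that conflicts with Lemma~\ref{le:cycl_norm}.

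First, $\zeta^k\in R_m(A)$ for every $k$, and $\zeta^k\equiv a^k \pmod{mA}$. So all powers of $\zeta$ are congruent to rational integers modulo $m$. In particular $\zeta^k-\zeta^j\equiv a^k-a^j \pmod{m}$.

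We may reduce to the case where $n$ is a prime power $p^e>2$, or $n=4$, or $n=p$ odd, by replacing $\zeta$ with a suitable power $\zeta'\neq\pm1$. Since $n\neq1,2$, some power of $\zeta$ has order $4$ or an odd prime $p$. If that fails, then $n$ is a power of $2$ that is at least $8$, and a power of $\zeta$ has order $4$.

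Take $\xi=\zeta^k$ of order $\ell\in\{4\}\cup\{\text{odd primes}\}$. Then $\xi\equiv b\pmod{mA}$ for some $b\in\ZZ$, and applying $\xi\mapsto\xi^j$ for the various $j$ coprime to $\ell$ gives $\xi^j\equiv b^j$. The cleanest route is to use a prime $q\mid m$ and a prime $\mathfrak q$ of $\ZZ[\xi]$ over $q$. Modulo $\mathfrak q$, $\xi$ lies in $\mathbb F_q$, so $\mathfrak q$ has residue degree $1$. Moreover $\xi-\xi^j\in m A\cap\QQ(\xi)$ is divisible by $q$ in $\OO_{\QQ(\xi)}$, since $mA\cap\QQ(\xi)\subseteq m\OO_L\cap \QQ(\xi)=m\OO_{\QQ(\xi)}$ by integrality of $(\xi-\xi^j)/m$.

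For $j\not\equiv1\pmod\ell$, the element $\xi-\xi^j=\xi(1-\xi^{j-1})$ is, up to a unit, $1-\xi^{j-1}$. If $\xi^{j-1}\neq1$, its norm has absolute value at most $\ell$ by Lemma~\ref{le:cycl_norm}. On the other hand, divisibility by $q$ forces the norm to be divisible by $q^{\varphi(\ell)}$. Now $q^{\varphi(\ell)}\le\ell$ fails unless ($\ell=4$, $q=2$, $\varphi=2$, and $4\le4$) or ($\ell=3$, $q=2$, $\varphi=2$, and $4>3$, which fails) or ($\ell=p$, $\varphi=p-1$, $q^{p-1}\le p$, which fails for $p\ge3$). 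So the only surviving case is $\ell=4$ with $m$ even.

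That case is handled directly with $j=3$. Here $\xi-\xi^3=2\xi$, and we need $\xi\equiv b\pmod{m}$ itself. Write $\xi=\pm i$ with $i=b+m\gamma$. Then $\gamma=(i-b)/m$ must be integral, so $m\mid 1$ in the imaginary part. This is impossible for $m\ge2$.

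This divisibility-versus-norm comparison is the key step, and it is where I expect the bookkeeping to be most delicate.

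For (b), let $u\in R_m(A)^\times$. Complex conjugation preserves $R_m(A)$ because $A$ is stable and $\ZZ$ is fixed, so $\overline u\in R_m(A)^\times$ and hence $\overline u/u\in R_m(A)^\times\subseteq R_m(A)$. Every conjugate of $\overline u/u$ has absolute value $1$, using that $\sigma(\overline x)=\overline{\sigma(x)}$ for each embedding. By Kronecker's theorem it is therefore a root of unity, and (a) shows it equals $\pm1$.

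For (c), suppose $\overline u=-u$ and write $u=a+m\beta$. Then $2a=u+\overline u-m(\beta+\overline\beta)=-m(\beta+\overline\beta)$, so $m\mid 2a$ in the algebraic integers, and thus in $\ZZ$. Also $u\overline u=-u^2$ is a unit. Reducing modulo $mA$ gives $-u^2\equiv -a^2$ and $u\overline u\equiv a\cdot a$ modulo $m$ (using $\overline u\equiv a$). Hence $2a^2\equiv0$ and $a^2$ is invertible modulo any prime $q\mid m$. So $q=2$, and since $m\ge3$ with $m\mid 2a$ and $\gcd(a,m)=1$, we get $m\mid 2$, a contradiction.
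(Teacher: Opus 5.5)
\textbf{Gap in (a).} The gap is at the step you flagged yourself. You assert $\xi-\xi^j\in mA$, but your derivation only raises $\xi\equiv b$ to the $j$-th power. That gives $\xi^j\equiv b^j \pmod{mA}$, hence only $\xi-\xi^j\equiv b-b^j\pmod{mA}$. Nothing you wrote shows $b^j\equiv b \pmod m$, so the divisibility of $\xi-\xi^j$ by $q$, on which the norm comparison rests, is unsupported.

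The missing idea is to apply the Galois automorphism $\sigma_j\colon\xi\mapsto\xi^j$ instead of powering. This must be done inside $\QQ(\xi)$, because $\sigma_j$ need not extend to an automorphism preserving $A$:
\begin{enumerate}[(1)]
\item By integrality, $(\xi-b)/m\in A\cap\QQ(\xi)\subseteq\ZZ[\xi]$.
\item Apply $\sigma_j\in\Gal(\QQ(\xi)/\QQ)$, which fixes $b$ and preserves $m\ZZ[\xi]$, to get $\xi^j-b\in m\ZZ[\xi]$.
\item Subtract to conclude $\xi-\xi^j\in m\ZZ[\xi]$.
\end{enumerate}
With this repair your norm comparison goes through.

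\textbf{A shorter route.} Once you know $(\xi-b)/m\in\ZZ[\xi]$, both the norm bookkeeping and the reduction to $\ell\in\{4\}\cup\{\text{odd primes}\}$ are unnecessary. The coefficient argument you used for $\ell=4$ works for any root of unity $\zeta\neq\pm1$ of order $n$. In the $\ZZ$-basis $1,\zeta,\dots,\zeta^{\varphi(n)-1}$ of $\ZZ[\zeta]$, with $\varphi(n)\ge 2$, the coefficient of $\zeta$ in $(\zeta-b)/m$ is $1/m\notin\ZZ$. This is the paper's proof, phrased there as $\ZZ[\zeta]\subseteq R_m(A)\cap\QQ(\zeta)\subseteq\ZZ+m\ZZ[\zeta]$, which is impossible unless $\zeta=\pm1$.

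\textbf{Parts (b) and (c).} Part (b) is the paper's argument. Part (c) is a valid variant: the paper instead uses $2u=u-\overline u\in mA$ and the invertibility of $u$ to get $2/m\in A$. In your version the detour through $u\overline u=-u^2$ and $q=2$ can be dropped. Indeed $m\mid 2a$, together with $\gcd(a,m)=1$ (because $a\equiv u$ is invertible modulo $mA$), already gives $m\mid 2$.
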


\begin{proof}
    First, if $\zeta$ is a  root of unity in $R_m(A)$, then $R_m(A)\supseteq \ZZ[\zeta]$. But also $R_m(A) \cap \QQ(\zeta) \subseteq R_m(\ZZ[\zeta])$. This implies $\ZZ[\zeta] = R_m(\ZZ[\zeta])$, which is only possible if $\zeta = \pm 1$ because $\ZZ[\zeta]$ is the ring of integers of $\QQ(\zeta)$. This proves the first claim; compare to \cite[Prop.~2.4]{Springer24}.

    Now let $u\in R_m(A)^\times$. The element $\zeta = u/ \overline u \in R_m(A)^\times$ has absolute value $1$ in every embedding because $L\subseteq \Qtr(i)$; see the preliminary material in Section~\ref{sec:prelim}. Thus, $\zeta$ is a root of unity. This proves that $u = \zeta \overline u$ for $\zeta\in \mu(R_m(A)) = \{ \pm 1\}$. This proof is essentially the same as the proof of the classic \cite[Thm.~4.12]{Washington}; see also \cite[Thm.~2.5]{Springer24}.

    Finally, assume $m\geq 3$ and $\overline u = -u$. Since $u\in R_m(A)$, we write $u = a + m\alpha$ for $a\in \ZZ$ and $\alpha\in A$. Then, $$
        2u = u- \overline u = (a + m\alpha) - (a + m\overline \alpha) = m(\alpha - \overline \alpha) \in mA
    $$
    This implies $2/m \in A$, which is impossible if $m  > 2$ because $A$ only includes algebraic integers. Therefore, $u = \overline u$, proving the final case.
\end{proof}

As a clarifying remark, we also mention the following lemma.
\begin{lemma}
    If $x$ and $y$ are algebraic integers with $xy = 1$, then $y\in \ZZ[x]$. Consequently, $A^\times\cap R_m(A) = R_m(A)^\times$.
\end{lemma}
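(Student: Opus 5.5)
The first claim follows from the minimal polynomial of $x$ over $\QQ$. Since $x$ is an algebraic integer, it satisfies a monic relation
$$x^n + a_{n-1}x^{n-1} + \cdots + a_1 x + a_0 = 0, \qquad a_j\in\ZZ,$$
and we may take $n$ minimal, i.e.\ the minimal polynomial of $x$. Its constant term is $a_0 = \pm N(x)$, up to the appropriate power from the field degree. Because $x$ is a unit with inverse $y$ an algebraic integer, $N(x)$ is a unit of $\ZZ$, so $a_0 = \pm 1$. Rearranging gives
$$x\,(x^{n-1} + a_{n-1}x^{n-2} + \cdots + a_1) = -a_0,$$
and hence
$$y = x^{-1} = -a_0^{-1}\,(x^{n-1} + \cdots + a_1) \in \ZZ[x].$$
If one prefers to avoid norms: $x\in\QQ(x)$ is integral, and so is $y\in\QQ(x)$. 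It then suffices that $a_0$ divides $1$ in the algebraic integers, since $a_0 = \pm\prod$ of the conjugates of $x$, each of which is a unit. Then $a_0\in\ZZ$ is a unit of $\ZZ$.

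For the consequence, the inclusion $R_m(A)^\times\subseteq A^\times\cap R_m(A)$ is immediate. For the converse, let $x\in A^\times\cap R_m(A)$ with inverse $y\in A$. Then $x$ and $y$ are algebraic integers with $xy=1$, so $y\in\ZZ[x]$ by the first part. Since $R_m(A)$ is a ring containing $\ZZ$ and $x$, it contains $\ZZ[x]$, hence $y\in R_m(A)$, and so $x\in R_m(A)^\times$.

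The only mildly delicate point is justifying $a_0=\pm1$. I would handle it with the product-of-conjugates argument, because every conjugate of $x$ is invertible among the algebraic integers, its inverse being the corresponding conjugate of $y$.
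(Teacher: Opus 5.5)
Your proposal is correct and follows the paper's proof essentially verbatim. The minimal polynomial of $x$ has constant term $\pm 1$, rearranging $f(x)=0$ writes $x^{-1}$ as an integral polynomial in $x$, and $\ZZ[x]\subseteq R_m(A)$ gives the consequence. Your product-of-conjugates argument for $a_0=\pm1$ just spells out a step the paper takes for granted; also, the constant term is exactly $(-1)^n N_{\QQ(x)/\QQ}(x)$, so your hedge ``up to the appropriate power'' is unnecessary.
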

\begin{proof}
    This argument is the same as \cite[Thm.~2.3.(a)]{Springer24}.
    Because $x$ is an algebraic unit, its minimal polynomial takes the form $f(X) = X^d + c_{d-1}X^{d-1} + \dots + c_1X \pm 1$. By rearranging the equality $f(x) = 0$, we find that 
    $$
        x(x^{d-1} + c_{d-1}x^{d-2} + \dots + c_1) = \mp 1
    $$
    We deduce that $y  = x^{-1}$ is an integral combination of powers of $x$, as desired. Therefore, if $x$ is a unit and is in $R_m(A)$, then so is its inverse.
\end{proof}

Now consider the formula $V(t)$ defined by
\begin{equation}
	\exists u \in R_3(A)^\times, \ (u-1) t = 2u + 1.
\end{equation}

We may conceptualize the set of $t\in A$ for which $V(t)$ holds as the intersection of $A$ and the image of a fractional-linear transformation applied to $R_3(A)^\times$.
Because this transformation is invertible, we observe that there is a fractional-linear transformation $T$ mapping any $t$ to a potential witness $u = T(t)$, which is truly a witness if $T(t) \in R_3(A)^\times$.
We constructed $T$ so that $T(t)\in R_3(A)^\times$ for certain desired totally real $t = \zeta + \zeta^{-1}$; see the lemmas below.

\begin{lemma}
	\label{lem:V_real}
	If $t\in A$ and $V(t)$ holds, then $t$ is totally real.
\end{lemma}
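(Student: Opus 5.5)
The plan is to use Proposition~\ref{prop:basic_Rm}(c), which says every unit of $R_3(A)$ is fixed by complex conjugation. Suppose $t\in A$ and $V(t)$ holds, and take a witness $u\in R_3(A)^\times$ with $(u-1)t = 2u+1$.

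First I rule out $u=1$. In that case the equation reads $0 = 3$, which is impossible. So $u\neq 1$, and therefore
$$
	t = \frac{2u+1}{u-1}
$$
inside the field of fractions $L$. This shows that $t$ lies in $\QQ(u)$, since it is a rational function of $u$ with rational coefficients.

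Next I show that $u$ is totally real. By Proposition~\ref{prop:basic_Rm}(c) with $m=3$, we have $\overline u = u$. If $L$ is totally real there is nothing to prove. Otherwise $\overline{\phantom{x}}$ is the automorphism $\tau$ from Section~\ref{sec:prelim}, which satisfies $\sigma(\tau(x)) = \overline{\sigma(x)}$ for every embedding $\sigma$ of $L$ into $\CC$. Then $\sigma(u) = \sigma(\overline u) = \overline{\sigma(u)}$ for every such $\sigma$, so every embedding of $u$ is real. Every embedding of $\QQ(u)$ into $\CC$ extends to an embedding of $L$, so every conjugate of $u$ is real. Equivalently, $u$ lies in $L^+ = \Qtr\cap L$, by the lemma identifying $L^+$.

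Finally, $t$ is a rational function of $u$ with rational coefficients, so $\QQ(t)\subseteq\QQ(u)\subseteq\Qtr$, and hence $t$ is totally real. Alternatively, one can argue directly: $\overline t = (2\overline u+1)/(\overline u-1) = t$, and then apply the same embedding argument to $t$.

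I expect no real obstacle. The only delicate points are ruling out $u=1$, which is done above, and the passage from "fixed by $\tau$" to "totally real", which uses the defining property of $\tau$ from the preliminaries.
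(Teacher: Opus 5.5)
Your proof is correct and follows essentially the same route as the paper: rule out $u=1$ because it gives $0=3$, solve $t = \frac{2u+1}{u-1}$, and use Proposition~\ref{prop:basic_Rm}(c) to conclude that $u$, and hence $t$, is totally real. Your extra step passing from $\overline u = u$ to total reality via the defining property of $\tau$ spells out what the paper leaves implicit.
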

\begin{proof}
	Suppose $(u-1) t = 2u + 1$ for $u\in R_3(A)^\times$.
	Clearly, $u \neq 1$ because $0\neq 3$. 
	Therefore, 
	$
	t = \frac{2u+1}{u-1}
	$
	is totally real because $u$ is totally real by Proposition~\ref{prop:basic_Rm}.
\end{proof}

\begin{lemma}
	\label{lem:trace_in_V}
	Let $\zeta\in A$ be a primitive $n$-th root of unity.
	If $\frac{n}{\gcd(n,3)}$ is greater than 1 and not a prime power, 
	then $V(\zeta + \zeta^{-1})$ holds in $A$.
\end{lemma}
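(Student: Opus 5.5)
The plan is to solve the defining equation of $V$ for the witness explicitly and then verify that this witness lies in $R_3(A)^\times$. For $t=\zeta+\zeta^{-1}$, the equation $(u-1)t=2u+1$ rearranges to $u(t-2)=t+1$. So the candidate witness is $u=T(t)=\frac{t+1}{t-2}$, provided $t\neq 2$; this holds because $\zeta\neq 1$, as $n>1$. Multiplying through by $\zeta$ gives the factorizations
$$
t-2=\zeta^{-1}(\zeta-1)^2,\qquad t+1=\zeta^{-1}(\zeta^2+\zeta+1)=\zeta^{-1}\frac{\zeta^3-1}{\zeta-1},
$$
and hence
$$
u=\frac{\zeta^3-1}{(\zeta-1)^3}.
$$

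\emph{Step 1: $u$ is a unit of $\ZZ[\zeta]\subseteq A$.} The element $\zeta^3$ is a primitive root of unity of order $n/\gcd(n,3)$. By hypothesis this order is $>1$ and not a prime power, so $1-\zeta^3$ is a unit by Lemma~\ref{le:cycl_norm}.

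Next I claim $n$ itself is not a prime power. If $n=p^a$, then $n/\gcd(n,3)$ equals either $p^a$ (when $p\neq 3$) or $3^{a-1}$ (when $p=3$). The first is a prime power and the second is either $1$ or a prime power, contradicting the hypothesis. Since $n>1$ is not a prime power, Lemma~\ref{le:cycl_norm} shows $1-\zeta$ is also a unit. Therefore $u\in \ZZ[\zeta]^\times\subseteq A^\times$.

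\emph{Step 2: $u\in R_3(A)=\ZZ+3A$.} This is the one step where the specific choice of $T$ (and the modulus $3$) matters, and I expect it to be the main point. In $\ZZ[\zeta]$ we have
$$
(\zeta-1)^3=\zeta^3-1-3\zeta^2+3\zeta\equiv \zeta^3-1 \pmod{3\ZZ[\zeta]}.
$$
So we may write $\zeta^3-1=(\zeta-1)^3+3\beta$ with $\beta=\zeta^2-\zeta\in\ZZ[\zeta]$. Then
$$
u=1+3\,\frac{\beta}{(\zeta-1)^3}.
$$
Since $\zeta-1$ is a unit, $\beta/(\zeta-1)^3\in\ZZ[\zeta]\subseteq A$, and hence $u\in \ZZ+3A=R_3(A)$.

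\emph{Step 3: conclusion.} By the preceding lemma, $A^\times\cap R_3(A)=R_3(A)^\times$, so $u\in R_3(A)^\times$. By construction $(u-1)t=2u+1$, so $u$ witnesses $V(\zeta+\zeta^{-1})$ in $A$.
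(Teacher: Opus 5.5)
Your proof is correct and follows the paper's argument: you use the same witness $u=\frac{t+1}{t-2}$, get that it is a unit from $1-\zeta$ and $1-\zeta^3$ being units via Lemma~\ref{le:cycl_norm}, and show it lies in $1+3A$. The paper phrases that last step as $t-2\equiv t+1 \bmod 3$, i.e.\ $u-1=3/(t-2)=3\zeta/(\zeta-1)^2$, which is the same element your identity $\zeta^3-1=(\zeta-1)^3+3(\zeta^2-\zeta)$ produces; you also usefully spell out why $n$ itself is not a prime power, which the paper leaves implicit.
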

\begin{proof}
	Define $t = \zeta + \zeta^{-1}$ and $u = \frac{t +1}{t - 2}$. We will show $u\in R_3(A)^\times = A^\times \cap R_3(A)$, which proves $V(t)$ holds by definition after inverting the fractional-linear transformation.
	
	Because $\zeta^3$ is a root of unity of order $\frac{n}{\gcd(n,3)}$, 
	the hypotheses imply that both $1 - \zeta$ and $1 - \zeta^3$ are units; see Lemma~\ref{le:cycl_norm}.
	Specifically, both of the following are units:
	\begin{align*}
	t -2 &= \zeta^{-1}(1 - \zeta)^2 &  t+1&= \zeta^{-1} \frac{1 - \zeta^3}{1 - \zeta}.
	\end{align*}
	Therefore, $u$ is a unit.
	Moreover, because $t -2 \equiv t + 1\bmod 3$, we conclude that $u \in 1 + 3A \subseteq R_3(A)$ by definition, establishing $u\in R_3(A)^\times$ as desired.
\end{proof}

We have now established that $V(t)$ defines a totally real subset that contains $\zeta + \zeta^{-1}$ for certain roots of unity $\zeta$.
To ensure that we ultimately define a totally real set with no extraneous unwanted elements, we shift our attention to elements of the form $\zeta + \zeta^{-1} + 2$, which are precisely the elements described by Kronecker's theorem; see Lemma~\ref{lem:traces_in_0_4}.
In particular, we define the following formulas.
\begin{align}
	B(x) &\iffdef (\exists s \ V(s) \wedge x = s^2 ) \wedge ( \exists r \  V(r) \wedge x = 4-r^2);\\
	M(z) &\iffdef \exists x \ (B(x) \wedge (z^2 - (x - 2)z + 1 = 0)).\label{eq:M}
\end{align}

\begin{lemma}
\label{lem:B_real}
If $x\in A$ and $B(x)$ holds, then $x = \zeta + \zeta^{-1} + 2$ for some root of unity $\zeta$.
\end{lemma}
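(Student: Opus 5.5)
The plan is to show that every conjugate of $x$ lies in $(0,4)$ and then apply Lemma~\ref{lem:traces_in_0_4}. Suppose $B(x)$ holds in $A$. Then there are $s,r\in A$ with $V(s)$, $V(r)$, $x=s^2$ and $x=4-r^2$. By Lemma~\ref{lem:V_real}, both $s$ and $r$ are totally real algebraic integers. Consequently $x=s^2$ is a totally real algebraic integer. For every embedding $\sigma$ of $\QQ(x,s,r)$ into $\CC$ we have $\sigma(x)=\sigma(s)^2\geq 0$ and $\sigma(x)=4-\sigma(r)^2\leq 4$, so every conjugate of $x$ lies in the closed interval $[0,4]$.

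The main step is excluding the endpoints. We have $\sigma(x)=0$ exactly when $\sigma(s)=0$, which means $s=0$, and $\sigma(x)=4$ exactly when $r=0$. So it suffices to show that $V(0)$ fails in $A$. If $V(0)$ held, there would be $u\in R_3(A)^\times$ with $0=(u-1)\cdot 0=2u+1$, so $u=-1/2$. This is not an algebraic integer, a contradiction. Hence $s\neq 0$ and $r\neq 0$, and since $s,r$ are nonzero algebraic numbers, all their conjugates are nonzero as well. Therefore every conjugate of $x$ lies in the open interval $(0,4)$.

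Finally, $x$ is a totally real algebraic integer with all conjugates in $(0,4)$, so Lemma~\ref{lem:traces_in_0_4} gives $x=\zeta+\zeta^{-1}+2$ for some root of unity $\zeta$. One point to make explicit in the write-up: conjugates of $x$ are taken over $\QQ$, and each one is $\sigma(x)$ for some embedding $\sigma$ of the finitely generated field $\QQ(x,s,r)$, so the bounds above cover all of them. I expect no step to be a serious obstacle; the only subtlety is ruling out the endpoints $0$ and $4$, which the computation $V(0)\Rightarrow u=-1/2$ handles.
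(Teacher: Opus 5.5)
Your proposal is correct and follows the paper's proof essentially step for step. Lemma~\ref{lem:V_real} makes the witnesses $s,r$ totally real, so the conjugates of $x$ lie in $[0,4]$; the endpoints are excluded because $V(0)$ would force $2u=-1$; and Lemma~\ref{lem:traces_in_0_4} finishes. Your remark that $\sigma(x)\in\{0,4\}$ already forces $s=0$ or $r=0$ (by injectivity of $\sigma$) is slightly more explicit than the paper, which states this only for $x\in\{0,4\}$ itself.
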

\begin{proof}
	The elements $r,s$ which witness the fact that $B(x)$ holds are totally real by Lemma~\ref{lem:V_real}. 
	Consequently, $x = s^2$ and $4-x = r^2$ are both totally nonnegative, proving all conjugates of $x$ lie in the closed interval $[0,4]$. Note that $x\in \{0,4\}$ implies that either $s$ or $r$ is zero.
    But $V(0)$ does not hold, since this implies $2u  =-1$, which contradicts integrality. 
    This implies $x = \zeta + \zeta^{-1} +2$; see Lemma~\ref{lem:traces_in_0_4}.
\end{proof}

\begin{lemma}
\label{lem:all_M_is_zeta}
	 If $z\in A$ and $M(z)$ holds, then $z$ is a root of unity.
\end{lemma}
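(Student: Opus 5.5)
By the definition of $M$ in \eqref{eq:M}, if $M(z)$ holds then there is some $x\in A$ with $B(x)$ and $z^2-(x-2)z+1=0$. By Lemma~\ref{lem:B_real}, $x=\xi+\xi^{-1}+2$ for some root of unity $\xi$. The plan is to substitute this into the quadratic and read off $z$ directly.

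Substituting gives $x-2=\xi+\xi^{-1}$, so the equation becomes
\[
z^2-(\xi+\xi^{-1})z+1=(z-\xi)(z-\xi^{-1})=0 .
\]
The factorization is checked by expanding the right-hand side: the product of the roots is $\xi\xi^{-1}=1$ and their sum is $\xi+\xi^{-1}$. Since everything lies in the field $\Qbar$, which has no zero divisors, $z=\xi$ or $z=\xi^{-1}$. Either way $z$ is a root of unity.

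There is no real obstacle here. The substantive work, Kronecker's theorem and the total reality supplied by $V$, is already contained in Lemma~\ref{lem:B_real}, so this lemma only needs the factorization step. The one point to note is that $\xi$ need not lie in $A$ a priori. This does not matter, because the equation is read in $\Qbar$ and $z\in A$ is given; in fact $z\in A$ forces $\xi\in A$.
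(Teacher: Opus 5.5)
Your proof is correct and is essentially the paper's argument: take a witness $x$ with $B(x)$, and use Lemma~\ref{lem:B_real} to write $x-2=\xi+\xi^{-1}$. Then factor the quadratic as $(z-\xi)(z-\xi^{-1})$. Your side remark that $\xi\in A$ is also right, since if $z=\xi^{-1}$ then $\xi=z^{-1}=z^{\ord(z)-1}\in A$.
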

\begin{proof}
    If $x$ is a witness of $M(z)$, then Lemma~\ref{lem:B_real} implies $z$ is a root of
    $$
        T^2 - (x - 2)T +1 = T^2 - (\zeta + \zeta^{-1})T + 1 = (T-\zeta)(T - \zeta^{-1}),
    $$
    where $\zeta$ is a root of unity. Hence, $z = \zeta$ or $z = \zeta^{-1}$.
\end{proof}

Now we are almost done.
Indeed, we have a formula $M(z)$ which defines a subset containing only roots of unity.
The following lemma establishes the fact that $M$ holds for ``enough'' roots of unity, which enables the definability of the full torsion subgroup in the following section.

\begin{lemma}
\label{lem:M_holds_for_zeta^2}
Let $m\geq1$ be an integer for which $\frac{m}{\gcd(m,3)}$ is greater than $1$ and not a power of $2$. 
Assume that $A$ contains $i = \sqrt{-1}$.
If $\zeta\in A$ is a primitive $m$-th root of unity, then $M(\zeta^2)$ holds in $A$.
\end{lemma}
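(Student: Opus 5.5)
The plan is to exhibit an explicit witness $x$ for $M(\zeta^2)$. The quadratic $z^2-(x-2)z+1$ has $\zeta^2$ as a root exactly when $x-2=\zeta^2+\zeta^{-2}$, so I take
$$x=\zeta^2+\zeta^{-2}+2=(\zeta+\zeta^{-1})^2 .$$
It then remains to show $B(x)$, which requires two witnesses:
\begin{itemize}
\item an $s$ with $V(s)$ and $s^2=x$;
\item an $r$ with $V(r)$ and $r^2=4-x$.
\end{itemize}
For $s$ the natural candidates are $\pm(\zeta+\zeta^{-1})$, that is, $w+w^{-1}$ for $w\in\{\zeta,-\zeta\}$. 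For $r$ I use the identity
$$4-(\zeta+\zeta^{-1})^2=2-\zeta^2-\zeta^{-2}=-(\zeta-\zeta^{-1})^2=\bigl(\pm i(\zeta-\zeta^{-1})\bigr)^2 .$$
Since $(i\zeta)^{-1}=-i\zeta^{-1}$, this says $\pm i(\zeta-\zeta^{-1})=w+w^{-1}$ for $w\in\{i\zeta,-i\zeta\}$. Because $i\in A$, all four elements $\pm\zeta,\pm i\zeta$ lie in $A$. So it suffices to check that Lemma~\ref{lem:trace_in_V} applies to at least one element of each pair $\{\zeta,-\zeta\}$ and $\{i\zeta,-i\zeta\}$.

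To verify this, I will control the orders. Multiplying $\zeta$ by a fourth root of unity changes only the $2$-part of its order, not its odd part. Any pair $\{w,-w\}$ contains an element of even order: if $w$ has odd order, then $-w$ has twice that order. So in each pair I choose $w$ with $n=\ord(w)$ even.

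Write $m=2^a3^bk$ with $\gcd(k,6)=1$. The odd part of $n$ equals the odd part $3^bk$ of $m$. Hence $n/\gcd(n,3)=2^c\cdot 3^{\max(b-1,0)}k$ with $c\geq 1$. The hypothesis that $m/\gcd(m,3)$ is greater than $1$ and not a power of $2$ says exactly that $3^{\max(b-1,0)}k>1$. Therefore $n/\gcd(n,3)$ is divisible by $2$ and by an odd number greater than $1$, so it is greater than $1$ and not a prime power.

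Lemma~\ref{lem:trace_in_V} then gives $V(w+w^{-1})$ for the chosen $w$ in each pair, which produces the required $s$ and $r$. Thus $B(x)$ and hence $M(\zeta^2)$ hold.

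The main obstacle is this order bookkeeping, and in particular the case where the odd part of $m$ (after removing one factor of $3$) is a prime power. There, Lemma~\ref{lem:trace_in_V} forces the chosen root to have even order. The freedom of sign in the pairs $\{w,-w\}$ is exactly what resolves this, so that step needs to be stated carefully. The algebraic identities for $s^2$ and $r^2$ are routine.
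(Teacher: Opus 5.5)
Your proposal is correct and follows essentially the same route as the paper: take $x = 2+\zeta^2+\zeta^{-2}$, realize $s$ and $r$ as $w+w^{-1}$ for even-order $w\in\{\pm\zeta\}$ and $w\in\{\pm i\zeta\}$ respectively, and apply Lemma~\ref{lem:trace_in_V}. The differences are cosmetic: you choose the sign independently in each pair, which suffices, whereas the paper picks a single common sign $\epsilon$; and your explicit computation of $n/\gcd(n,3)$ spells out a step the paper leaves brief.
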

\begin{proof}
    Choose $\epsilon \in \{\pm1\}$ so that both $\eta := \epsilon\zeta$ and $\xi := i\epsilon \zeta$ have even order.
    To see that this is possible, consider the $2$-primary part $\alpha$ of $\zeta$.
    The elements of the set $\{\alpha, -\alpha, i\alpha, -i\alpha\}$ are all distinct roots of unity, so at most one of them is equal to one.
    Thus, we simply choose $\epsilon$ so that both $\epsilon \alpha$ and $\epsilon i \alpha$ are nontrivial.

    Now we have $\eta^2 = \zeta^2$ and $\xi^2 = -\zeta^2$ for roots of unity $\eta$ and $\xi$, whose orders are even. 
    Because $\frac{m}{\gcd(m,3)}$ is not a power of 2, this implies that Lemma~\ref{lem:trace_in_V} applies to $\eta$ and $\xi$, establishing that $V(s)$ and $V(r)$ hold in $A$ for $s = \eta + \eta^{-1}$ and $r = \xi + \xi^{-1}$.
	Writing $x = 2 + \zeta^2 + \zeta^{-2}$, we have
\begin{align*}
	s^2 &= \eta^2 + 2 + \eta^{-2} = \zeta^2 + 2 + \zeta^{-2} = x\\
	4-r^2 &= 4 - (\xi^2 + 2 + \xi^{-2}) = 2 + \zeta^2 + \zeta^{-2} = x.
\end{align*}
Thus, $B(x)$ holds and $\zeta^2$ is a root of $z^2 - (x-2)z + 1$, so $M(\zeta^2)$ holds by definition.
\end{proof}

\section{Defining the torsion subgroup of units}\label{sec:def_tor}

\subsection{Rings containing certain roots of unity}
\label{subsec:ring_with_zeta}
We now seek to obtain a definition of the torsion subgroup of units.
First, we concern ourselves with the particular case of rings which contain certain roots of unity. 
The following theorem is straightforward from the formulas we developed in Section~\ref{sec:foundational_formulas}, and we will generalize it in Theorem~\ref{thm:def_muL_in_OOL_general}. 

Define the formula
	\begin{equation}\label{eq:def_muL_if_20}
		\widetilde\Tor(z) \iffdef \ \exists \omega \ (\omega^{5} = 1 \wedge M(\omega z^2))
	\end{equation}
    
\begin{theorem}
    \label{thm:def_muL_in_OOL_if_20th}
	If $A$ is a ring of algebraic integers in  $\Qtr(i)$ which is stable under complex conjugation and contains a primitive $20$-th root of unity, then $\widetilde\Tor(z)$ holds for $z\in A$ if and only if $z$ is a root of unity.
\end{theorem}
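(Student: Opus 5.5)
The proof has two directions. Throughout we take Lemma~\ref{lem:all_M_is_zeta} and Lemma~\ref{lem:M_holds_for_zeta^2} as given.

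\emph{Soundness.} Suppose $\widetilde\Tor(z)$ holds for $z\in A$, with witness $\omega$. Then $\omega^5=1$, so $\omega$ is a root of unity, and $M(\omega z^2)$ holds. By Lemma~\ref{lem:all_M_is_zeta}, $\omega z^2$ is a root of unity, hence so is $z^2=\omega^{-1}(\omega z^2)$. A number whose square is a root of unity is itself a root of unity, so $z$ is one.

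\emph{Completeness.} Let $z\in A$ be a root of unity. We must find a fifth root of unity $\omega\in A$ with $M(\omega z^2)$. Since $A$ contains a primitive $20$-th root of unity, it contains $i$ and all fifth roots of unity. The plan is to write $\omega z^2=\zeta^2$ for some root of unity $\zeta\in A$ whose order $m$ makes $m/\gcd(m,3)$ greater than $1$ and not a power of $2$. Lemma~\ref{lem:M_holds_for_zeta^2} then gives $M(\zeta^2)$.

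The natural choice is $\zeta=\omega' z$ with $\omega'$ a fifth root of unity, and $\omega=\omega'^2$. Since $5$ is odd, $\omega$ ranges over all fifth roots of unity as $\omega'$ does. Write $n=\ord(z)$.
\begin{itemize}
\item If $5\nmid n$, take $\omega'$ primitive of order $5$. Then $\ord(\zeta)=5n$, which has an odd prime factor other than $3$, so $m/\gcd(m,3)$ is divisible by $5$ and is neither $1$ nor a power of $2$.
\item If $5\mid n$, take $\omega'=1$ and $\zeta=z$. The order $n$ is still divisible by $5$, so the same conclusion holds.
\end{itemize}
In either case $\zeta\in A$ and $\zeta^2=\omega z^2$ with $\omega^5=1$, so $\widetilde\Tor(z)$ holds.

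The only delicate point is the $5$-adic valuation of the order. We need $5$ to divide $\ord(\omega' z)$, and multiplying by $\omega'$ could cancel the $5$-part of $z$. Choosing $\omega'=1$ whenever $5\mid n$ avoids this. This is exactly why the primitive $20$-th root of unity is assumed: it supplies both $i$, needed by Lemma~\ref{lem:M_holds_for_zeta^2}, and the fifth roots of unity.
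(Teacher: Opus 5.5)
Your proposal is correct and follows the paper's proof essentially verbatim. Your reparametrization $\zeta=\omega' z$ with witness $\omega=\omega'^2$ is the same as the paper's choice of $\omega^3 z$, since $\omega'=\omega^3$. The case split on whether $5$ divides $\ord(z)$ is identical, and so is the appeal to Lemma~\ref{lem:all_M_is_zeta} for soundness.
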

\begin{proof}
    If $\widetilde\Tor(z)$ holds with witness $\omega$, then $\omega$ is a root of unity by definition, and $\omega z^2$ is a root of unity by Lemma~\ref{lem:all_M_is_zeta}. 
    Hence $z$ is a root of unity too.

    Conversely, suppose $z\in \mu(A)$.
    Choose a $5$-th root of unity $\omega \in A $ so that $\omega^3z$ has order divisible by $5$. Indeed, we can simply choose $\omega = 1$ if $z$ already has order divisible by $5$, and choose $\omega$ to be a primitive $5$-th root of unity otherwise.
    Then, by construction, the order $m = \ord(\omega^3z)$ is greater than $1$ and $\frac{m}{\gcd(m,3)}$ is not a power of $2$. Moreover, $A$ contains $i = \sqrt{-1}$ because it contains a primitive $20$-th root of unity. Thus, Lemma~\ref{lem:M_holds_for_zeta^2} applies, and we deduce that $M((\omega^3z)^2) = M(\omega z^2)$ holds.
\end{proof}

\begin{remark}
    The prime $5$ is merely convenient in Theorem~\ref{thm:def_muL_in_OOL_if_20th}. 
    Clearly, we could replace that prime with any odd prime power $m$ greater than $3$, assuming instead that $A$ contains a primitive $4m$-th root of unity.
    Other variants on this theme are also possible.
    Rather than write the theorem in a more cumbersome form, we are content to write the given version, which minimizes the degree of the auxiliary polynomial, and reserve greater generality for the following subsection.
\end{remark}

\subsection{Definability of the torsion subgroup of units in general}
The preceding section contains an extraneous hypothesis, namely assuming that the rings in question contain a given useful root of unity.
To circumvent this requirement, we use the standard technique of interpreting extension rings over a base ring, as in the following lemma.
This material is standard; see \cite[Chapter 5]{Hodges}. 
We are also inspired by work of Shlapentokh; see \cite{Shlapentokh18}, for example.
We apply this trick to obtain more general results with regard to both decidability and definability.

\begin{lemma}
    \label{lem:interpretations}
    Fix $d\geq 1$. Let $R$ be a ring. For ${\bf c} \in R^d$, define
    $$
        f_{{\bf c}} = X^d + c_{d-1}X^{d-1} + \dots + c_0.
    $$
    The $R$-algebra $R_{{\bf c}} = R[X]/(f_{{\bf c}}(X))$ is uniformly first-order interpretable in $R$ with domain $R^d$ and parameters ${\bf c}$.
    In particular, for every fixed $\mathcal L_{\text{ring}}$-formula $\psi$ applied to $R_{\bf c}$ there is an $\mathcal L_{\text{ring}}$-formula $\psi^{[d]}$ over $R$ that defines the coordinate preimage of $\psi$ uniformly in ${\bf c}$. 
    Each parameter in $\psi$ corresponds to $d$ parameters in $\psi^{[d]}$. 
    If $\psi$ is (positive)-existential, then $\psi^{[d]}$ is the same.
\end{lemma}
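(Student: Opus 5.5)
The plan is the standard construction of an interpretation, carried out explicitly so that uniformity in ${\bf c}$ and preservation of positive-existential formulas are visible. Since $f_{\bf c}$ is monic of degree $d$, the division algorithm in $R[X]$ makes $R_{\bf c}$ a free $R$-module with basis $1, X, \dots, X^{d-1}$. I identify an element $a_0 + a_1X + \dots + a_{d-1}X^{d-1}$ with the tuple $(a_0,\dots,a_{d-1})\in R^d$. The domain of the interpretation is then all of $R^d$, defined by a trivially true formula, and equality is coordinatewise equality. No quotient by a definable equivalence relation is needed.

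Next I write the ring operations as polynomial maps in the coordinates with integer coefficients in ${\bf c}$. Addition is coordinatewise. The constants $0$ and $1$ are the tuples $(0,\dots,0)$ and $(1,0,\dots,0)$. For multiplication, I note that $X^k \bmod f_{\bf c}$ for $0\le k\le 2d-2$ equals $\sum_j P_{k,j}({\bf c})X^j$, where $P_{k,j}\in \ZZ[C_0,\dots,C_{d-1}]$ are fixed polynomials. They are obtained by iterating the reduction $X^d = -\sum c_j X^j$, and they depend only on $d$, not on $R$ or ${\bf c}$. Hence the product of ${\bf a}$ and ${\bf b}$ has $j$-th coordinate $\sum_{k}\sum_{s+t=k} a_s b_t P_{k,j}({\bf c})$. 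This is a fixed integer polynomial in $({\bf a},{\bf b},{\bf c})$, so the graphs of addition and multiplication are defined by quantifier-free conjunctions of polynomial equations over $R$, uniformly in ${\bf c}$.

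Finally I define $\psi^{[d]}$ by induction on the formula. First I unnest the atomic formulas of $\psi$ into equations of the form $x=y$, $x+y=w$, $xy=w$, $x=0$ and $x=1$; this unnesting uses only existential quantifiers and conjunctions. Each such atomic formula is replaced by the corresponding conjunction of $d$ coordinate equations. The connectives are kept unchanged, and each quantifier $\exists x$ (or $\forall x$) is replaced by a block $\exists x_0\cdots\exists x_{d-1}$ (respectively $\forall$). Each parameter of $\psi$ becomes a $d$-tuple of parameters, and ${\bf c}$ enters as an extra tuple of free variables. A routine induction shows that $R\models \psi^{[d]}({\bf a},{\bf c})$ if and only if $R_{\bf c}\models\psi({\bf a})$. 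The translation introduces no negations, disjunctions beyond those already present, or universal quantifiers, so positive-existential formulas translate to positive-existential formulas.

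I do not expect a real obstacle, since this is bookkeeping (compare \cite[Chapter 5]{Hodges}). The only point needing care is the uniformity claim: the reduction polynomials $P_{k,j}$ must be fixed integer polynomials independent of ${\bf c}$. I will state this explicitly, because it is what lets one formula $\psi^{[d]}$ serve all ${\bf c}$ and all rings $R$ at once.
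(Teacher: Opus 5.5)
Your proposal is correct and follows the same route as the paper: you identify $R_{\bf c}$ with $R^d$ via the basis $1, X, \dots, X^{d-1}$, make addition and equality coordinatewise, compute multiplication through the reduction $X^d = -\sum_j c_j X^j$, and translate formulas by induction. You also spell out what the paper's one-line proof leaves implicit: that the reduction polynomials are fixed integer polynomials in ${\bf c}$, which gives uniformity, and that the translation adds no negations or universal quantifiers, so positive-existential formulas stay positive-existential.
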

\begin{proof}
Identifying $R_{{\bf c}} = R[X]/(f_{{\bf c}}(X)) \cong R^d$, addition and equality are both coordinate-wise, while multiplication is given by polynomial multiplication and the rule that $X^d = - c_{d-1}X^{d-1} - \dots - c_0$. Translation of arbitrary formulas follows by induction.
\end{proof}

\begin{theorem}\label{thm:def_muL_in_OOL_general}
    There is a single parameter-free positive-existential formula 
    $\Tor(z)$ which defines $\mu(\OO_L)$ in $\OO_L$ for every subfield $L\subseteq \Qtr(i)$.
\end{theorem}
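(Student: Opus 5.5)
We reduce to Theorem~\ref{thm:def_muL_in_OOL_if_20th} by passing to the ring $A = \OO_L[\zeta_{20}]$, where $\zeta_{20}$ is a fixed primitive $20$-th root of unity, and interpreting $A$ over $\OO_L$ via Lemma~\ref{lem:interpretations}. The ring $A$ consists of algebraic integers in $\Qtr(i)$, since $\zeta_{20}\in\Qab\subseteq\Qtr(i)$ and $L\subseteq\Qtr(i)$. By the lemma on stability under conjugation, $A$ is stable under complex conjugation. Moreover, $A$ contains a primitive $20$-th root of unity. So $\widetilde\Tor$ defines $\mu(A)$ in $A$. The task is to present $A$ as some $\OO_L[X]/(f_{\bf c})$ with parameters definable without parameters, and then to intersect with $\OO_L$.

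\textbf{Choosing the presentation.} The minimal polynomial of $\zeta_{20}$ over $L$ divides $\Phi_{20}$ and has degree $d\in\{1,2,4,8\}$. Since $\mu_{20}\subseteq A$, its coefficients are sums of products of $20$-th roots of unity. We want a presentation that works uniformly in $L$, so we avoid naming these coefficients. Instead we disjoin over all monic factors $g$ of $\Phi_{20}$ in $\Qab[X]$. There are finitely many such $g$, and their coefficients lie in $\ZZ[\zeta_{20}]$.

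In $\OO_L$ we need to say ``${\bf c}$ are the coefficients of such a factor, and ${\bf c}$ lies in $\OO_L$.'' A cleaner route is to quantify existentially over ${\bf c}\in\OO_L^d$ subject to the positive-existential condition that $f_{\bf c}$ divides $\Phi_{20}$ in $\OO_L[X]$. This condition says there exist ${\bf e}$ with $f_{\bf c}\cdot f_{\bf e}=\Phi_{20}$, which is a system of polynomial equations in ${\bf c},{\bf e}$.

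\textbf{Why divisibility gives the right ring.} Since $\Phi_{20}$ is separable, $f_{\bf c}$ is then a product of distinct linear factors $X-\zeta$ over $\Qbar$, with $\zeta$ primitive $20$-th roots of unity. Hence $\OO_L[X]/(f_{\bf c})$ embeds into a product of copies of rings $\OO_L[\zeta]$. This product is not a domain when $f_{\bf c}$ is reducible, so the cleanest choice is to require $f_{\bf c}$ irreducible, i.e.\ of minimal degree.

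To avoid expressing irreducibility, we take the disjunction over $d\in\{1,2,4,8\}$ of a formula asserting that $f_{\bf c}\mid\Phi_{20}$ and that \emph{no} monic divisor of smaller degree exists. That last clause is negative, so positivity is lost. The fix is to make the formula work for any divisor. In the product ring $\prod_j \OO_L[\zeta_j]$, let $\psi^{[d]}$ express $\widetilde\Tor(z)$ for $z\in\OO_L$ embedded diagonally. Being existential, $\widetilde\Tor$ holds in a finite product iff it holds in each factor. This uses the Chinese-remainder-style fact that the relevant interpreted ring is a subring of the product. Existential witnesses in the product need not come from $\OO_L[X]/(f_{\bf c})$, but here they can be built from components. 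I expect this to be the main obstacle.

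\textbf{Handling the non-domain case.} I would bypass the obstacle by choosing a specific $g$ and adding the condition $\exists {\bf e}:\ f_{\bf c}f_{\bf e}=\Phi_{20}$, where $f_{\bf c}$ has degree exactly the minimal degree realized. More concretely, for any ${\bf c}$ with $f_{\bf c}\mid\Phi_{20}$, the quotient $R_{\bf c}$ is a subring of the product $\prod_j\OO_L[\zeta_j]$ of finite index. It contains $\OO_L$ diagonally.

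Soundness holds for any such ${\bf c}$. If $\psi^{[d]}(z)$ holds, then projecting to one factor $\OO_L[\zeta_j]$, which is a ring homomorphism and preserves positive-existential truth, shows $z$ is a root of unity there by Theorem~\ref{thm:def_muL_in_OOL_if_20th}. Completeness only requires the existence of one good ${\bf c}$: take $f_{\bf c}$ to be the minimal polynomial of $\zeta_{20}$ over $L$. Its coefficients lie in $\OO_L$, being integral elements of $L$, and it divides $\Phi_{20}$. For this ${\bf c}$, $R_{\bf c}\cong\OO_L[\zeta_{20}]$ and Theorem~\ref{thm:def_muL_in_OOL_if_20th} gives $\widetilde\Tor(z)$ for $z\in\mu(\OO_L)$.

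\textbf{The final formula.} Set
\[
\Tor(z):=\bigvee_{d\in\{1,2,4,8\}}\exists{\bf c}\,\exists{\bf e}\ \bigl(f_{\bf c}f_{\bf e}=\Phi_{20}\wedge \widetilde\Tor^{[d]}(z,0,\dots,0)\bigr).
\]
This is parameter-free and positive-existential. In it, $z$ is embedded as the constant vector $(z,0,\dots,0)$, and projecting a root of unity in a factor back to $z\in\OO_L$ is immediate.
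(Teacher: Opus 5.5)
Your final formula and argument are essentially the paper's proof. You existentially quantify a monic divisor $f_{\bf c}$ of $\Phi_{20}$ in $\OO_L[X]$ and use the minimal polynomial of $\zeta_{20}$ over $L$ (coefficients in $\OO_L$ by integral closure) as the witness for completeness. For soundness you map $\OO_L[X]/(f_{\bf c})$ homomorphically onto some $\OO_L[\zeta_j]$, which is the paper's passage to an irreducible factor $g$, and use that positive-existential formulas are preserved. The detour through products is unnecessary and its side claims are not right in general: $R_{\bf c}$ need not have finite index in $\prod_j\OO_L[\zeta_j]$, and an existential formula with disjunctions, as $\widetilde\Tor$ has via $R_3$-membership, need not hold in a product just because it holds in each factor. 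Nothing in your final argument depends on these claims.
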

\begin{proof}
    For each $d\geq 1$, translate the formula \eqref{eq:def_muL_if_20} by the interpretation trick of Lemma~\ref{lem:interpretations} to get a formula over $R^d \cong R[X]/(f_{\bf c}(X))$ for any ring $R$, with parameters ${\bf c}$ for the coefficients of the monic polynomial $f_{\bf c}$.
     Write $\Tor^{(d)}(z; {\bf c})$ for the result of evaluating this formula on elements of the form $(z, 0,\dots, 0)$, and define
    $$
        \Tor(z)
         \iffdef  
            \bigvee_{d = 1}^8 \left(\exists c_0\dots \exists c_{d-1} \ (f_{\bf c}\mid \Phi_{20}
            \wedge \Tor^{(d)}(z;{\bf c})\right)
    $$
    Above, we use $(c_0,\dots, c_{d-1}) = {\bf c}$ as shorthand. 
    The divisibility condition $f_{\bf c}\mid \Phi_{20}$ in $R[X]$ corresponds to a positive existential formula which quantifies the coefficients of a polynomial $h$ of degree $8-d$ and imposes $hf_{\bf c} = \Phi_{20}$.

    Fix a field $L\subseteq \Qtr(i)$. We will show that $\Tor$ defines $\mu(\OO_L)$, as desired.
    First, suppose that $z\in \OO_L$ is a root of unity.
    Fix a primitive $20$-th root of unity $\theta$ and let $g$ be its minimal polynomial over $L$.
    Say $g$ has degree $d$ and observe that $d = [L(\theta) : L] \leq [\QQ(\theta) :  \QQ] = 8$.
    Because $\OO_L$ is integrally closed, the tuple ${\bf c}$ of coefficients of $g = f_{\bf c}$ lie in $\OO_L^d$.
    Moreover $g$ must divide the minimal polynomial of $\theta$ over $\QQ$, which is $\Phi_{20}(X)$ by definition.
    Then, $\OO_L[\theta]\cong \OO_L[X]/(f_{\bf c}(X))$ and $\Tor(z)$ holds with our chosen witness ${\bf c}$ by Theorem~\ref{thm:def_muL_in_OOL_if_20th}.

     Conversely, suppose that $\Tor(z)$ holds with witness ${\bf c}\in \OO_L^d$, and let $g(X)$ be a monic irreducible factor over $L$ of $f_{\bf c}(X)$.
     Because $f_{\bf c}(X)\mid \Phi_{20}(X)$, we observe that $g(X)$ is the minimal polynomial for some primitive $20$-th root of unity, call it $\theta$, and $g$ has coefficients in $\OO_L$. 
     Observe that there is a homomorphism 
     $$
        \OO_L[X]/(f_{\bf c}(X))\to \OO_L[X]/(g(X)) \cong \OO_L[\theta].
    $$
     Because positive-existential formulas are preserved under homomorphism, we deduce that the formula also holds for the image of $z$ under this homomorphism, which means that $z$ must be a root of unity by another application of Theorem~\ref{thm:def_muL_in_OOL_if_20th}.
\end{proof}

\begin{remark}
    The following is slightly subtle, so we point it out explicitly. Theorem~\ref{thm:def_muL_in_OOL_if_20th} applies to nonmaximal subrings $A\subseteq \OO_L$ because our machinery in Section~\ref{sec:foundational_formulas}  does not care about maximality.
    However, our specific deployment of the interpretation trick of Lemma~\ref{lem:interpretations} in the proof of Theorem~\ref{thm:def_muL_in_OOL_general} asserts that our ring contains the coefficients of the monic minimal polynomial of $\theta$, a primitive 20-th root of unity. 
    This is always true for the integrally closed $\OO_L$, but not necessarily for $A\subseteq \OO_L$ in general. 
    Of course, the theorem can be slightly generalized with care, but we prioritize the clean version above.
\end{remark}

\subsection{Subsets of the torsion subgroup of units}
We finish the section by showing how formulas defining $\mu(A)$ in $A$ can be used to define the specific subset of roots of unity which have prime-power order. 
Note that the property of two elements of $A$ being associates (i.e., unit multiples) is positive-existentially definable:
$$
	\Assoc(a,b) \iffdef \exists \epsilon \exists \delta \ (\epsilon\delta = 1 \wedge b = \epsilon a).
$$
This allows us to define the subset that we seek.

\begin{lemma}
\label{lem:assoc_iff}
    Let $m = p^e$ be a prime power and let $\zeta\in A$ be a primitive $m$-th root of unity.
    If $z\in A$ is any root of unity, then $1-z$ is an associate of $1 - \zeta$ if and only if $\ord(z) = m$.
\end{lemma}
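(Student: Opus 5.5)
Both directions will be handled via norms and the structure of cyclotomic units, working inside a number field containing both roots of unity, since associateness in $A$ implies associateness in the ring of all algebraic integers. For the \emph{if} direction, suppose $\ord(z) = m = p^e$. Then $z = \zeta^k$ for some $k$ coprime to $p$, since both are primitive $m$-th roots of unity and $\zeta$ generates $\mu_m$. Choose $k'$ with $kk'\equiv 1 \bmod m$. Then
$$
    \frac{1-\zeta^k}{1-\zeta} = 1 + \zeta + \dots + \zeta^{k-1}\in \ZZ[\zeta]\subseteq A,
$$
and its inverse is $\frac{1-\zeta}{1-\zeta^k} = \frac{1 - (\zeta^{k})^{k'}}{1-\zeta^k} = 1 + \zeta^k + \dots + \zeta^{k(k'-1)} \in A$. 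Hence $1-z$ and $1-\zeta$ differ by a unit of $A$, and $\Assoc(1-\zeta,1-z)$ holds. This is the classical cyclotomic unit computation and should be routine.

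For the \emph{only if} direction, suppose $1 - z = \epsilon(1-\zeta)$ with $\epsilon\in A^\times$, and let $n = \ord(z)$. First, $z \neq 1$, since otherwise $1-\zeta = 0$; this is impossible because $m>1$ (for $m=1$ the statement is trivial, as $1-z=0$ forces $z=1$). Next, take $F = \QQ(\zeta, z)$, a finite abelian extension, and compare absolute norms $N_{F/\QQ}$. By Lemma~\ref{le:cycl_norm} and multiplicativity in towers, $|N_{F/\QQ}(1-\zeta)| = p^{[F:\QQ(\zeta)]}\neq 1$, so $1-z$ is not a unit. Lemma~\ref{le:cycl_norm} then forces $n = q^f$ to be a prime power, with $|N_{F/\QQ}(1-z)| = q^{[F:\QQ(z)]}$. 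Equality of norms up to sign gives $q = p$. It remains to show $f = e$. I would compare norms again: $|N_{F/\QQ}(1-\zeta)| = p^{[F:\QQ]/\varphi(p^e)}$ and $|N_{F/\QQ}(1-z)| = p^{[F:\QQ]/\varphi(p^f)}$. Equating exponents gives $\varphi(p^e) = \varphi(p^f)$, which forces $e=f$ except in the degenerate case $p=2$ with $\{e,f\}=\{1,2\}$, where $\varphi(2)=\varphi(1)$ does not arise since $e,f\geq 1$, but $\varphi(2)=1\ne\varphi(4)=2$, so in fact no exception occurs.

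The main obstacle I expect is handling the exponent comparison carefully. As a cross-check, I would use the valuation-theoretic argument: in $\QQ(\zeta_{p^{\max(e,f)}})$ the ideal $(1-\zeta_{p^k})$ has valuation $p^{\max-k}$ at the unique prime above $p$. Equal ideals then force $e=f$, which confirms the norm computation and avoids any subtlety about the field degrees.
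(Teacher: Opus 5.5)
Your proof is correct and follows the paper's argument. The forward direction uses the geometric-sum identity showing that $(1-\zeta^k)/(1-\zeta)$ is a cyclotomic unit, and the converse compares absolute norms in $\QQ(\zeta,z)$ via Lemma~\ref{le:cycl_norm}, forcing $\ord(z)=p^f$ and $\varphi(p^f)=\varphi(p^e)$. Your remark about a ``degenerate case'' is muddled, since the only coincidence is $\varphi(1)=\varphi(2)$ and that is excluded because $e,f\geq 1$, but the conclusion $e=f$ stands. Your explicit check that $z\neq 1$ is a small point the paper leaves implicit.
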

\begin{proof}
    First, assume that $z$ is a primitive $m$-th root of unity.
    In particular, we have $z = \zeta^a$ for some $1\leq a< m$ with $\gcd(a,m) = 1$, and it is well known that the cyclotomic integer $\frac{1 - z}{1 - \zeta} = \frac{\zeta^a - 1}{\zeta - 1} = 1 + \zeta + \dots + \zeta^{a-1}$ is a unit; see \cite[\S8.1]{Washington}, for example.
    This completes the first direction.

    Conversely, let $n = \ord(z)$ and assume that $1-z$ and $1 - \zeta$ are associates. 
    Define $E = \QQ(z,\zeta)$.
    Because $1 -z$ and $1-\zeta$ are associates, we find that
    $$
        |\Phi_n(1)^{[E : \QQ(z)]} |
            = |N_{E/\QQ}(z - 1)| 
            = |N_{E/\QQ}(\zeta-1) |
            = |\Phi_m(1)^{[E : \QQ(\zeta)]}|
            = p^{[E : \QQ(\zeta)]}
    $$
    By Lemma~\ref{le:cycl_norm}, $n = p^f$ is a power of $p$ and $[E : \QQ(z)] = [E : \QQ(\zeta)]$, implying 
    $$
        \varphi(p^f) = [\QQ(z) : \QQ] = [\QQ(\zeta) : \QQ] = \varphi(p^e).
    $$
    We conclude that $n = m$, as desired.
\end{proof}

Define the following.
\begin{align}
    \PrPo(z) &\iffdef z\in \mu(A) \wedge z \neq 1 \wedge (1-z)\not\in A^\times;\\
    \Level(z,w) &\iffdef \PrPo(z)\wedge \PrPo(w) \wedge \Assoc(1 -z, 1-w);\\
    \psi(z;w) &\iffdef z\in \mu(A) \wedge \Assoc(1-z,1-w).\label{eq:psi_def}
\end{align}
We use $\mu(A)$ as shorthand in the definitions above, noting that all of these constitute first-order formulas if $\mu(A)$ is definable in $A$.
In fact, $\psi$ is positive-existential when using the positive-existential definitions for $\mu(A)$ given in Theorems~\ref{thm:def_muL_in_OOL_if_20th} and \ref{thm:def_muL_in_OOL_general}, while the other displayed formulas are not existential.
Combining the preceding results, we deduce the following characterization of elements which are roots of unity of prime-power order.

\begin{theorem}\label{thm:prime_power_if_20th}
Let $A$ be a ring of algebraic integers. The following are true.
\begin{enumerate}[(a)]
    \item  $\PrPo(z)$ holds if and only if $z$ has prime-power order.
    \item $\Level(z,w)$ holds if and only if $z$ and $w$ are both roots of unity of prime-power order and $\ord(z) = \ord(w)$.
    \item If $w$ has prime-power order $m > 1$, then $\psi(z;w)$ holds precisely when $z$ is also a primitive $m$-th root of unity.
\end{enumerate}
Moreover, if either of the following additional conditions hold, then $\PrPo$ and $\Level$ correspond to first-order formulas, and $\psi$ gives a positive-existential formula.
\begin{enumerate}[(I)]
    \item $A$ is a subring of $\Qtr(i)$ which is stable under complex conjugation and contains a primitive $20$-th root of unity;
    \item $A = \OO_L$ for a subfield $L\subseteq \Qtr(i)$.
\end{enumerate}
\end{theorem}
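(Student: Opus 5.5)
The three equivalences in (a)--(c) are purely arithmetic statements about roots of unity in $A$; they do not depend on how $\mu(A)$ is defined. So I will prove them first and treat definability afterwards. For (a), let $z\in\mu(A)$ with $z\neq 1$ and $n=\ord(z)>1$. Lemma~\ref{le:cycl_norm} says $1-z$ is an algebraic unit exactly when $n$ is not a prime power. I also need that $1-z$ is a unit of $A$ iff it is a unit in the algebraic integers. This holds because the inverse of an algebraic unit $x$ lies in $\ZZ[x]$, by the lemma preceding the formula $V$, and $\ZZ[1-z]\subseteq A$. Hence $(1-z)\notin A^\times$ iff $n$ is a prime power greater than $1$, which is (a). Here I read ``prime-power order'' as excluding the trivial order $1$, as the condition $z\neq 1$ forces.

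For (c), I apply Lemma~\ref{lem:assoc_iff} with $\zeta=w$, a primitive $m$-th root of unity in $A$ with $m=p^e>1$. It states that for $z\in\mu(A)$, $1-z$ and $1-w$ are associates iff $\ord(z)=m$. One point needs checking: whether $\Assoc$ refers to associates in $A$ or among algebraic integers. The forward direction of that lemma only uses norms, so either notion suffices. For the reverse direction, the quotient $(1-z)/(1-w)=1+w+\dots+w^{a-1}$ lies in $\ZZ[w]\subseteq A$, and so does its inverse by the same symmetric expression. So the associates are genuinely associates in $A$.

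For (b), if $\Level(z,w)$ holds, then by (a) both $z$ and $w$ have prime-power order. Lemma~\ref{lem:assoc_iff}, applied with $\zeta=w$, then gives $\ord(z)=\ord(w)$. Conversely, if both have the same prime-power order $m$, then (a) gives $\PrPo(z)$ and $\PrPo(w)$, and (c) gives the associate relation.

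For the definability claims, I substitute a defining formula for $\mu(A)$. Under (I), Theorem~\ref{thm:def_muL_in_OOL_if_20th} supplies the positive-existential formula $\widetilde\Tor$; under (II), Theorem~\ref{thm:def_muL_in_OOL_general} supplies $\Tor$. The formula $\psi$ is a conjunction of this positive-existential formula with $\Assoc$, which is positive-existential, so $\psi$ is positive-existential. The formulas $\PrPo$ and $\Level$ additionally involve $z\neq1$ and the negation $(1-z)\notin A^\times$, that is, $\neg\exists y\,(1-z)y=1$. These are first-order but not existential.

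I expect no step to be a serious obstacle. The only delicate point is the ring-versus-algebraic-integers distinction for units and associates, which the inverse-in-$\ZZ[x]$ lemma settles.
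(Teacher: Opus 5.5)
Your proposal is correct and follows the paper's proof: (a) comes from Lemma~\ref{le:cycl_norm}, (b) and (c) come from Lemma~\ref{lem:assoc_iff}, and the definability claims come from substituting the formulas of Theorems~\ref{thm:def_muL_in_OOL_if_20th} and~\ref{thm:def_muL_in_OOL_general} for $\mu(A)$. Your extra checks make explicit details the paper leaves implicit: that units and associates in $A$ agree with those among all algebraic integers (via the inverse-in-$\ZZ[x]$ lemma and the explicit cyclotomic quotients), and that ``prime-power order'' must exclude order $1$.
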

\begin{proof}
    The first claim follows from Lemma~\ref{le:cycl_norm}. The second and third follow from Lemma~\ref{lem:assoc_iff}.
    As anticipated, the final claims are direct applications of Theorems~\ref{thm:def_muL_in_OOL_if_20th} and \ref{thm:def_muL_in_OOL_general}.
\end{proof}

\section{Deploying the construction of Mazur, Rubin and Shlapentokh}
We now seek to exploit the definability of the torsion subgroup of units.
In this section, we will use the torsion subgroup with a recent construction due to Mazur, Rubin and Shlapentokh \cite{MRS24}, which we generalize slightly.
Indeed, given a commutative ring $A$ and a multiplicative monoid $M\subseteq A$, we define
$$
    R_{A,M} := \{x\in A : \forall \epsilon \in M \ \exists \delta \in M \ \exists y\in A, \ \delta -1 = (\epsilon - 1)x + (\epsilon - 1)^2y\}.
$$
Compare to \cite[Def.~2.1]{MRS24}, which only considers the case of $A = \OO_L$ for an algebraic extension $L$ of $\QQ$. 
The following is essentially a recapitulation of \cite[Lem.~2.2 and Prop.~2.3]{MRS24}, presented in the more general context we desire here. 
Notably, the wider generality is convenient because we will apply the construction to extension rings which may fail to be integrally closed, or even an integral domain.
Although the proof strategy is not new, we include the proof to make this paper more self-contained.

\begin{prop} \label{prop:general_R_AM}
Fix a commutative ring $A$ of characteristic 0 and a multiplicative monoid $M\subseteq A$. The following are true.
\begin{enumerate}[(a)]
    \item If $M$ is first-order definable in $A$, then $R_{A,M}$ is first-order definable in $A$.
    \item $R_{A,M}$ contains $0$ and $1$, and is closed under both addition and multiplication. 
    \item If $M$ is a subgroup of $A^\times$, then $R_{A,M}$ is a subring satisfying $\ZZ\subseteq R_{A,M}\subseteq A$.
\end{enumerate}
\end{prop}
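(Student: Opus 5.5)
Part (a) is immediate from the form of the definition. If $\mu(\epsilon)$ is a formula defining $M$ in $A$, then
$$
x\in R_{A,M}\iff \forall \epsilon\,\bigl(\mu(\epsilon)\to \exists\delta\,\exists y\,(\mu(\delta)\wedge \delta-1=(\epsilon-1)x+(\epsilon-1)^2y)\bigr),
$$
which is a first-order formula in the language of rings. Parameters in $\mu$, if any, are simply carried along.

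For (b), I fix $\epsilon\in M$ and produce witnesses in each case.
\begin{itemize}
\item For $x=0$, take $\delta=1\in M$ (as $M$ is a monoid) and $y=0$.
\item For $x=1$, take $\delta=\epsilon$ and $y=0$.
\item For addition, suppose $x_1,x_2\in R_{A,M}$. Take witnesses $\delta_1,y_1$ for $x_1$ and $\delta_2,y_2$ for $x_2$ relative to the same $\epsilon$. Then $\delta_1\delta_2-1=(\delta_1-1)(\delta_2-1)+(\delta_1-1)+(\delta_2-1)$. Each $\delta_j-1$ lies in $(\epsilon-1)x_j+(\epsilon-1)^2A$, so the product term lies in $(\epsilon-1)^2A$. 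Hence $\delta_1\delta_2-1\in(\epsilon-1)(x_1+x_2)+(\epsilon-1)^2A$, and $\delta=\delta_1\delta_2\in M$ is a witness for $x_1+x_2$.
\item For multiplication, this is the step needing the most care. Let $x_1\in R_{A,M}$ and take a witness $\delta_1$ for $x_1$ relative to $\epsilon$. Apply the defining condition for $x_2$ with $\delta_1$ in place of $\epsilon$; this is legitimate since $\delta_1\in M$. It gives $\delta_2\in M$ with $\delta_2-1=(\delta_1-1)x_2+(\delta_1-1)^2y'$. Now substitute $\delta_1-1=(\epsilon-1)x_1+(\epsilon-1)^2y_1$. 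The term $(\delta_1-1)^2y'$ lies in $(\epsilon-1)^2A$, and $(\delta_1-1)x_2\in(\epsilon-1)x_1x_2+(\epsilon-1)^2A$. So $\delta_2$ witnesses $x_1x_2\in R_{A,M}$.
\end{itemize}
This closure under multiplication is exactly the MRS ``iterate the condition'' trick. It uses only that the witnesses lie in $M$, so it works in any commutative ring.

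For (c), closure under addition together with $1\in R_{A,M}$ gives $\ZZ_{\ge 0}\subseteq R_{A,M}$. Here I read $\ZZ$ as its image in $A$, which is injective since the characteristic is $0$. It remains to get additive inverses, and this is where the group hypothesis enters. I first show $-1\in R_{A,M}$ by taking $\delta=\epsilon^{-1}\in M$. Then
$$
\epsilon^{-1}-1=-(\epsilon-1)+(\epsilon-1)^2\epsilon^{-1}.
$$
The identity holds because $-(\epsilon-1)\epsilon+(\epsilon-1)^2=(\epsilon-1)(1-\epsilon)\cdot(-1)\cdot(-1)$; more simply, multiplying through by $\epsilon$ gives $1-\epsilon=-(\epsilon-1)\epsilon+(\epsilon-1)^2=(\epsilon-1)(-\epsilon+\epsilon-1)$, which checks out. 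So $y=\epsilon^{-1}\in A$ is a witness. Multiplicative closure from (b) then gives $-x=(-1)x\in R_{A,M}$ for every $x\in R_{A,M}$. Hence $R_{A,M}$ is a subring with $\ZZ\subseteq R_{A,M}\subseteq A$, the inclusion into $A$ holding by definition.
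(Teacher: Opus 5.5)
Your proof is correct and follows the paper's argument essentially step for step. The witnesses are the same: $\delta=1$ and $\delta=\epsilon$ for $0$ and $1$, $\delta_1\delta_2$ for sums, the iterated witness obtained by using $\delta_1$ in place of $\epsilon$ for products, and $\delta=\epsilon^{-1}$ to put $-1$ into $R_{A,M}$ before invoking multiplicative closure. One cosmetic slip: your first attempted justification of the identity in (c) is false as written, since its right side equals $-(\epsilon-1)^2$ rather than $1-\epsilon$. The ``multiplying through by $\epsilon$'' check that follows is correct, so simply delete the first one.
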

\begin{proof}
    The definability claim is immediate. For the second statement, we note that $0\in R_{A,M}$ is witnessed by $\delta = 1$ for all $\epsilon$, and $1\in R_{A,M}$ is witnessed by $\delta = \epsilon$. To show closure under addition and multiplication, fix $x_1,x_2\in R_{A,M}$ and choose any $\epsilon \in M$. 
    By definition, for $i\in \{1,2\}$, there exists $\delta_{x_i}\in M$ so that 
    \begin{align*}
        \delta_{x_i} - 1 &\equiv (\epsilon - 1)x_i\bmod (\epsilon - 1)^2
    \end{align*}
    Then, we observe that $\delta_{x_1 + x_2} := \delta_{x_1}\delta_{x_2}$ suffices for the element $x_1 + x_2$ by the following computation:
    \begin{align*}
        (\delta_{x_1}\delta_{x_2}) - 1 &= (\delta_{x_1} - 1) + (\delta_{x_2} -1) + (\delta_{x_1} - 1)(\delta_{x_2} - 1)\\
        &\equiv(\epsilon - 1)x_1 + (\epsilon -1)x_2 \bmod (\epsilon -1)^2
    \end{align*}
    This implies that $R_{A,M}$ is closed under addition.
    
    For multiplication, apply the definition of $R_{A,M}$ to $x_2$ relative to $\epsilon' = \delta_{x_1}$ to get $\delta_{x_2}'\in M$ satisfying
    $$
        \delta_{x_2}' - 1 \equiv (\delta_{x_1} - 1)x_2 \equiv (\epsilon -1 )x_1x_2 \bmod ( \epsilon -1)^2.
    $$
    The first equivalence above uses the fact that $(\delta_{x_1} - 1)^2\equiv 0 \bmod (1 - \epsilon)^2$ by the definition of $\delta_{x_1}$.
    This shows that the product $x_1x_2$ is in $R_{A,M}$, completing the second claim.

    To prove the final claim, it is enough to prove that $-1\in R_{A,M}$, which implies that the semiring is indeed a ring.
    For each $\epsilon\in M$, take $\delta = \epsilon^{-1}$. This is possible when $M$ is a group. Then,
    \[
        \epsilon^{-1} -1 = -(\epsilon - 1) + \epsilon^{-1}(\epsilon - 1)^2 \equiv -(\epsilon -1) \bmod (\epsilon - 1)^2,
    \] 
    proving that $-1\in R_{A,M}$, as desired.
\end{proof}

Our first goal is to merely identify which ring is obtained when applying this construction with a torsion subgroup of units, as presented in Theorem~\ref{thm:identify_R}. Although our definability of the torsion subgroup in Section~\ref{sec:def_tor} puts restrictions on the fraction field of $A$, we require no such restrictions for this identification of $R_{A,\mu(A)}$. 
We start with two lemmas.

\begin{lemma}\label{lem:div_by_p}
     Let $A$ be any ring of algebraic integers containing a root of unity $\zeta_p$ of prime order $p$, and let $x\in A$ be an element with monic minimal polynomial $f(T)$ over $\ZZ$. 
     If $x \in \ZZ + (\zeta_p - 1)A$ and $\deg(f) \geq 2$, then the discriminant $\disc(f)$ is divisible by $p$.
\end{lemma}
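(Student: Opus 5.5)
We work with the conjugates of $x$ and the prime above $p$ generated by $\lambda := \zeta_p - 1$. Write $x = a + \lambda y$ with $a\in\ZZ$ and $y\in A$. Set $d = \deg(f)\geq 2$ and let $x = x_1, x_2, \dots, x_d$ be the roots of $f$ in a Galois closure. The discriminant is
$$
    \disc(f) = \prod_{j<k} (x_j - x_k)^2,
$$
so it suffices to find a single factor $x_1 - x_2$, together with a prime $\mathfrak P$ above $p$, such that $x_1 - x_2\in \mathfrak P$. Indeed, $\disc(f)\in\ZZ$ would then lie in $\mathfrak P\cap \ZZ = p\ZZ$.

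\textbf{Setting up the field and the prime.} Let $E$ be a number field containing $\zeta_p$, $x$ and $y$; for instance $E=\QQ(\zeta_p, x, y)$, which works because $A$ consists of algebraic integers. Let $N$ be the Galois closure of $E$ over $\QQ$, and fix a prime $\mathfrak P$ of $\OO_N$ above $p$. Since $p$ is totally ramified in $\QQ(\zeta_p)$, the ideal $(\lambda)$ is the unique prime of $\ZZ[\zeta_p]$ above $p$, so $\lambda\in\mathfrak P$. Moreover, for every $\sigma\in\Gal(N/\QQ)$ the conjugate $\sigma(\lambda) = \zeta_p^k - 1$ is again divisible by $\lambda$, hence also lies in $\mathfrak P$.

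\textbf{Key step.} Because $d\geq 2$, $x$ has a conjugate $x_2 = \sigma(x)\neq x$ for some $\sigma\in\Gal(N/\QQ)$. Then
$$
    x - \sigma(x) = \lambda y - \sigma(\lambda)\sigma(y).
$$
Here $y$ and $\sigma(y)$ are algebraic integers in $\OO_N$, and both $\lambda$ and $\sigma(\lambda)$ lie in $\mathfrak P$. Hence $x - \sigma(x)\in\mathfrak P$. Since $\disc(f)$ contains the factor $(x-\sigma(x))^2$ and all the other factors are algebraic integers, we get $\disc(f)\in \mathfrak P\cap\ZZ = p\ZZ$, as required.

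\textbf{Expected obstacle.} The main point to get right is that $\sigma(\lambda)$ stays in the same prime $\mathfrak P$ for every $\sigma$. This is exactly the total ramification fact: the only prime of $\ZZ[\zeta_p]$ over $p$ is $(\lambda)$. It is also why the hypothesis is stated with $\zeta_p - 1$ rather than with an arbitrary element. Everything else is routine bookkeeping with rings of integers.
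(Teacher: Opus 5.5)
Your proposal is correct and follows essentially the same route as the paper: choose $\sigma$ with $\sigma(x)\neq x$, use that $\sigma(\zeta_p-1)$ is a multiple of $\zeta_p-1$ to put $x-\sigma(x)$ into an ideal above $p$, and then intersect with $\ZZ$. The only cosmetic difference is that the paper uses that $\sigma(1-\zeta_p)/(1-\zeta_p)$ is a unit and works directly with the ideal $(\zeta_p-1)\OO_F$, whereas you pass to a prime $\mathfrak P$ above $p$ (you could equally have used $\lambda\OO_N\cap\ZZ=p\ZZ$ and skipped $\mathfrak P$).
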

\begin{proof}
    Let $F$ be a finite Galois extension of $\QQ$ containing both $x$ and $\zeta_p$, and let $x = a + (\zeta_p-1)y$ for $a\in \ZZ$ and $y\in A$.
    If $\deg(f) \geq 2$, then there is an automorphism $\sigma \in \Gal(F/\QQ)$ so that $\sigma(x) \neq x$.
     It is well-known that $ u = \frac{\sigma(1 -\zeta_p)}{1-\zeta_p}$ is a unit; see \cite[Chapter~8]{Washington}.
	 Therefore, because integers are fixed under automorphisms,
	 $$
	 	\sigma(x) = \sigma( a + (\zeta_p-1)y) = a + (\zeta_p - 1)u\sigma(y).
	 $$
	 Taking the difference, we find $x - \sigma(x) \in (\zeta_p -1)\OO_F$. 
     Therefore, using the definition of discriminant, we deduce that $\disc(f)$ is an integer contained in $(x - \sigma(x))^2\OO_F \subseteq (1 - \zeta_p)\OO_F$.
     Because $(1 - \zeta_p)\OO_F\cap \ZZ = p\ZZ$, we deduce that $\disc(f)$ is divisible by $p$.
\end{proof}

\begin{lemma}\label{lem:intersection_cases}
    Let $A$ be any ring of algebraic integers, and let $S$ be a set of primes $p$ for which $A$ contains a primitive $p$-th root of unity $\zeta_p$.
    $$\bigcap_{p\in S} (\ZZ + (\zeta_p -1)A) = \begin{cases}
                \ZZ + \prod_{p\in S} (\zeta_p - 1)A & \text{ if } \#S < \infty;\\
                \ZZ & \text{ otherwise.}
                \end{cases}
    $$
\end{lemma}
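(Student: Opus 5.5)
We treat the finite and infinite cases separately. In both, the inclusion of $\ZZ$ is obvious, so the content lies in the reverse inclusions.

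\emph{Finite case.} Write $S = \{p_1,\dots,p_k\}$, $\pi_j = \zeta_{p_j} - 1$ and $\Pi = \prod_j \pi_j$. The inclusion $\ZZ + \Pi A \subseteq \bigcap_j (\ZZ + \pi_j A)$ is clear. For the reverse inclusion we induct on $k$. The key observation is a comaximality statement: $\pi_j$ and $\pi_l$ for distinct primes generate the unit ideal of $\ZZ[\zeta_{p_j},\zeta_{p_l}]\subseteq A$. Indeed $\pi_j \mid p_j$ and $\pi_l\mid p_l$, and we can write $1 = a p_j + b p_l$. Hence $1 = \pi_j \alpha + \pi_l \beta$ with $\alpha,\beta\in A$.

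With this in hand, suppose $x\in(\ZZ + \pi_1 A)\cap(\ZZ+ P A)$, where $P$ is the product of the remaining $\pi_j$ (this handles the induction step). Write $x = a + \pi_1 y = b + P z$ with $a,b\in\ZZ$. We want to modify the integer so that a single integer works for both. Since $\pi_1$ and $P$ generate the unit ideal (each $\pi_j$ for $j\ge2$ is comaximal with $\pi_1$), we have
\[
b - a = \pi_1 y - P z .
\]
We seek $c\in\ZZ$ with $c\equiv a \bmod \pi_1A$ and $c\equiv b\bmod PA$. Note that $\pi_1 A\cap\ZZ\supseteq p_1\ZZ$ and $PA\cap \ZZ\supseteq (\prod_{j\ge2}p_j)\ZZ$, and these integer moduli are coprime. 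By CRT in $\ZZ$, choose $c\equiv a \bmod p_1$ and $c\equiv b\bmod \prod_{j\geq 2} p_j$. Then $x - c \in \pi_1 A\cap PA$.

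It remains to see $\pi_1A\cap PA = \pi_1 P A$. This follows from comaximality: if $w = \pi_1 y' = P z'$, then
\[
w = w(\pi_1\alpha + P\beta) = \pi_1 P(z'\alpha + y'\beta).
\]
This completes the finite case.

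\emph{Infinite case.} Take $x$ in the intersection. Let $f$ be the minimal polynomial of $x$ over $\ZZ$. Since $x$ is an algebraic integer, $f$ is monic in $\ZZ[T]$. If $\deg f\ge 2$, then Lemma~\ref{lem:div_by_p} shows that $p\mid\disc(f)$ for every $p\in S$. But $\disc(f)$ is a nonzero integer (since $f$ is separable) and $S$ is infinite, which is a contradiction. Hence $\deg f = 1$, so $x\in\QQ$ and is integral, i.e.\ $x\in\ZZ$.

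\emph{Main obstacle.} The main subtle step is the finite-case bookkeeping: matching the integer parts via CRT and the ideal intersection identity. I expect both to go through via the comaximality of the $\pi_j$ over distinct primes. The infinite case is immediate from Lemma~\ref{lem:div_by_p}.
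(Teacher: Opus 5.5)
Your proof is correct and follows the paper's argument. In the finite case you use CRT in $\ZZ$ to get a single integer that works modulo every $(\zeta_p-1)A$, then use comaximality to identify the intersection of these ideals with their product; you unpack this by induction where the paper states it in one line. In the infinite case you apply Lemma~\ref{lem:div_by_p} exactly as the paper does.
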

\begin{proof}
    It is clear that the left-hand side contains the right-hand side, so we only concern ourselves with the opposite containment.
     When $S$ is finite, equality follows from the Chinese Remainder Theorem because $(\zeta_p - 1)A \cap \ZZ = p\ZZ$ are pairwise comaximal ideals as $p\in S$ varies; see Lemma~\ref{le:cycl_norm}.
     Indeed, if $a_p\in \ZZ$ are integers so that $x\equiv a_p \bmod (\zeta_p -1)A$ for each $p\in S$, then there is a single $a\in \ZZ$ so that $a \equiv a_p\bmod p\ZZ$ for all $p\in S$.
     Then $x - a\in \cap_{p\in S} (\zeta_p - 1)A = \prod_{p\in S} (\zeta_p - 1)A$.

    If $S$ is infinite and $x\in \cap_{p\in S} (\ZZ + (\zeta_p -1)A)$ is not an integer, then the monic minimal polynomial of $x$ has discriminant divisible by infinitely many primes by Lemma~\ref{lem:div_by_p}. 
    This is impossible, so it is clear that the intersection is precisely $\ZZ$.
\end{proof}

\begin{theorem}
	\label{thm:identify_R}
    Let $A$ be any ring of algebraic integers.
    Writing $S(A)$ for the set of primes $p$ such that $A$ contains a primitive $p$-th root of unity $\zeta_p$, we determine
    $$
        R_{A,\mu(A)} = \bigcap_{p\in S(A)} (\ZZ + (\zeta_p -1)A) = \begin{cases}
                \ZZ + \prod_{p\in S(A)} (\zeta_p - 1)A & \text{ if } \#S(A) < \infty;\\
                \ZZ & \text{ otherwise.}
        \end{cases}
    $$
\end{theorem}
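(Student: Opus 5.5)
The plan is to prove the first equality $R_{A,\mu(A)} = \bigcap_{p\in S(A)} (\ZZ + (\zeta_p - 1)A)$ by showing both inclusions directly. The second equality is then exactly Lemma~\ref{lem:intersection_cases} applied with $S = S(A)$. Throughout, I will only manipulate elements that are polynomials in roots of unity lying in $A$, so that non-maximality of $A$ causes no trouble. Divisibility questions will be settled by taking norms in a suitable cyclotomic number field $E$ and applying Lemma~\ref{le:cycl_norm}.

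\emph{Inclusion $\supseteq$.} Let $x$ lie in the intersection and fix $\epsilon\in\mu(A)$; I need $\delta\in\mu(A)$ and $y\in A$ with $\delta - 1 = (\epsilon-1)x + (\epsilon-1)^2 y$. I split into three cases according to $n = \ord(\epsilon)$.
\begin{enumerate}[(i)]
\item If $\epsilon = 1$, take $\delta = 1$.
\item If $n>1$ is not a prime power, then $\epsilon - 1\in A^\times$ by Lemma~\ref{le:cycl_norm}. Hence $(\epsilon-1)^2A = A$, and $\delta = 1$ works with a suitable $y$.
\item If $n = p^e$ with $e\geq 1$, then $\zeta := \epsilon^{p^{e-1}}\in A$ is a primitive $p$-th root of unity, so $p\in S(A)$. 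Since every primitive $p$-th root of unity in $A$ is a power of this one, I may write $x = a + (\zeta - 1)z$ with $a\in\ZZ$ and $z\in A$. I take $\delta = \epsilon^a$, which makes sense for negative $a$ because $\mu(A)$ is a group. Then $\epsilon^a - 1 \equiv a(\epsilon - 1) \bmod (\epsilon-1)^2A$, using the geometric sum for $a\ge 0$ and the identity from the proof of Proposition~\ref{prop:general_R_AM} for $a<0$. Moreover $\zeta - 1 = (\epsilon-1)(1+\epsilon+\dots+\epsilon^{p^{e-1}-1})\in(\epsilon-1)A$, so $(\epsilon-1)(\zeta-1)z \in (\epsilon-1)^2A$, and $\delta$ is a valid witness.
\end{enumerate}

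\emph{Inclusion $\subseteq$.} Let $x\in R_{A,\mu(A)}$ and $p\in S(A)$. Apply the definition with $\epsilon = \zeta_p$ to obtain $\delta\in\mu(A)$ and $y \in A$ with
\[
\delta - 1 = (\zeta_p - 1)w, \qquad w := x + (\zeta_p - 1)y \in A.
\]
The main step is to show $\delta = \zeta_p^a$ for some $a\ge 0$. Suppose $\delta\neq 1$, and let $E = \QQ(\delta,\zeta_p)$. Taking $N_{E/\QQ}$ of the displayed identity shows that $N_{E/\QQ}(1-\delta)$ is divisible by $N_{E/\QQ}(1-\zeta_p) = p^{[E:\QQ(\zeta_p)]}$, because $w$ is an algebraic integer. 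By Lemma~\ref{le:cycl_norm}, this forces $\ord(\delta) = p^k$ with $k\ge1$. Comparing exponents then gives $[E:\QQ(\delta)]\geq[E:\QQ(\zeta_p)]$, that is, $\varphi(p^k)\le p-1$, so $k = 1$.

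Hence $\delta = \zeta_p^a$ with $0\le a<p$; the case $\delta = 1$ corresponds to $a = 0$. Then
\[
w = \frac{\zeta_p^a - 1}{\zeta_p - 1} = 1+\zeta_p+\dots+\zeta_p^{a-1} \equiv a \pmod{(\zeta_p-1)A},
\]
where the identity holds in the domain $E$. It follows that $x = w - (\zeta_p-1)y \in \ZZ + (\zeta_p-1)A$. Since $p\in S(A)$ was arbitrary, $x$ lies in the intersection.

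The step I expect to be the main obstacle is this norm argument pinning down $\delta$ as an element of order $1$ or $p$. It uses the integrality of $w$ together with both halves of Lemma~\ref{le:cycl_norm}: first to force the order of $\delta$ to be a power of $p$, and then the degree comparison to rule out higher powers of $p$.
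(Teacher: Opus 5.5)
Your proof is correct and follows essentially the same route as the paper. For $\subseteq$ you take $\epsilon=\zeta_p$ and use the norm argument via Lemma~\ref{le:cycl_norm} to force $\delta=\zeta_p^a$; for $\supseteq$ you use the witness $\delta=\epsilon^a$; and the second equality comes from Lemma~\ref{lem:intersection_cases}. The differences are cosmetic: you split off the non-prime-power case and allow negative $a$, whereas the paper passes to a power of $\epsilon$ of prime order and shifts $a$ by multiples of $p$ to make it nonnegative, and your smaller field $E=\QQ(\delta,\zeta_p)$ works because $w=(\delta-1)/(\zeta_p-1)$ already lies in $E$.
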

\begin{proof}
    Fix an element $x\in R_{A,\mu(A)}$ and $p\in S(A)$.
    We will show $x\in \ZZ + (\zeta_p - 1)A$. 
    Choose $\epsilon = \zeta_p$ and let $\delta,y$ satisfy
     \begin{equation}\label{eq:construction_of_RLmuL}
        \delta - 1 = (\epsilon - 1)x  + (\epsilon - 1)^2y.
     \end{equation}
     If $\delta = 1$, then $x = -(\epsilon - 1)y\in (\zeta_p - 1)A$, so there is nothing to show.
     Thus, we may assume $\delta \neq 1$ and define $E = \QQ(\zeta_p, \delta,x,y)$. 
	 By Lemma~\ref{le:cycl_norm}, $|N_{E/\QQ}(\zeta_p -1)| = p^{[E : \QQ(\zeta_p)]}$ because $\zeta_p$ is a primitive $p$-th root of unity. 
	 Because $\delta -1 = (\zeta_p - 1)x \bmod (\zeta_p - 1)^2\OO_E$, we deduce that $\frac{\delta - 1}{\zeta_p -1}\in \OO_E$.
     Hence, $N_{E/\QQ}(\delta - 1)$ is divisible by $p^{[E : \QQ(\zeta_p)]}$. 
	 Applying Lemma~\ref{le:cycl_norm} to $\delta$, we conclude this is only possible if $\ord(\delta) = p^f$ is a power of $p$ and $[E : \QQ(\delta)] \geq [E : \QQ(\zeta_p)]$. 
	 But this implies that 
     $$p^{f-1}(p-1) = [\QQ(\delta) : \QQ]\leq [\QQ(\zeta_p) : \QQ] = p-1.$$ 
	 Therefore, $f = 1$ and $\ord(\delta) = p$, which means that $\delta = \zeta_p^a$ for some $0< a < p$.
     Now we simply divide \eqref{eq:construction_of_RLmuL} by $\zeta_p - 1$ to find:
     $$
        x \equiv \frac{\delta - 1}{\zeta_p - 1} 
        \equiv \frac{\zeta_p^a - 1}{\zeta_p - 1} 
        \equiv 1 + \dots + \zeta_p^{a-1}
        \equiv a \bmod (1-\zeta_p)A.
     $$

     Conversely, suppose that $x \in \cap_{p\in S(A)} (\ZZ + (1 - \zeta_p)A)$.
     Consider any $\epsilon \in \mu(A)$.
     The case $\epsilon = 1$ is trivial to verify, so assume $\epsilon \neq 1$.
     Some power of $\epsilon$ has prime order, say $\epsilon^e = \zeta_p$ for $p\in S(A)$ and $e\geq 1$. 
     This implies that $\zeta_p -1 = \epsilon^e - 1 = (\epsilon - 1)(\epsilon^{e-1} +\dots + 1)$ is a multiple of $\epsilon -1$.
     In particular, $x = a + (\epsilon - 1)b$ for some $a\in \Z$ and $b\in A$ because $x\in \ZZ + (\zeta_p - 1)A \subseteq \ZZ + (\epsilon -1)A$ by assumption.
     Moreover, because $p\in (\zeta_p - 1)A \subseteq (\epsilon - 1)A$, we can assume $a \geq 0$ without loss of generality by replacing $a$ by some $a + pk$ if necessary, adjusting $b$ accordingly.
     Then, by the binomial theorem, choosing $\delta = \epsilon^{ a}$ provides
     $$
        \delta - 1 
            = (\epsilon - 1 + 1)^a - 1
             = \sum_{k = 1}^a \binom{a}{k} (\epsilon - 1)^k \equiv a(\epsilon -1) \equiv (\epsilon - 1)x \bmod (\epsilon - 1)^2A.
     $$
     Thus, $x\in R_{A,\mu(A)}$.
    The final equality is given by Lemma~\ref{lem:intersection_cases}.
\end{proof}

This theorem immediately provides a definition of $\ZZ$ in certain rings of algebraic integers.
\begin{corollary}
	\label{cor:def_Z_in_OOL}
	There is a single parameter-free first-order formula which defines $\ZZ$ in $A$ whenever $A$ is a ring of algebraic integers in $\Qtr(i)$ which is stable under complex conjugation and contains both a primitive $20$-th root of unity and a primitive $p$-th root of unity for infinitely many primes $p$.
\end{corollary}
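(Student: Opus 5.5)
The plan is to combine the definability of the torsion subgroup with the identification of $R_{A,\mu(A)}$. By Theorem~\ref{thm:def_muL_in_OOL_if_20th}, the parameter-free positive-existential formula $\widetilde\Tor(z)$ defines $\mu(A)$ in $A$ for every ring $A$ as in the hypothesis: a ring of algebraic integers in $\Qtr(i)$, stable under complex conjugation, and containing a primitive $20$-th root of unity. The formula $\widetilde\Tor$ does not depend on $A$.

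Next, substitute $\widetilde\Tor$ for the membership condition $\epsilon\in M$ and $\delta\in M$ in the definition of $R_{A,M}$. This gives the explicit formula
$$
\Phi(x) \iffdef \forall \epsilon\, \bigl(\widetilde\Tor(\epsilon) \rightarrow \exists \delta\, \exists y\, (\widetilde\Tor(\delta) \wedge \delta - 1 = (\epsilon-1)x + (\epsilon-1)^2 y)\bigr).
$$
This is a single parameter-free first-order formula, independent of $A$. By Proposition~\ref{prop:general_R_AM}(a), it defines $R_{A,\mu(A)}$ in every such $A$.

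Finally, apply Theorem~\ref{thm:identify_R}. That theorem needs no restriction on the fraction field, only that $A$ is a ring of algebraic integers. By hypothesis $A$ contains a primitive $p$-th root of unity for infinitely many primes $p$, so $S(A)$ is infinite. Theorem~\ref{thm:identify_R} then gives $R_{A,\mu(A)}=\ZZ$, and hence $\Phi$ defines $\ZZ$ in $A$.

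There is no serious obstacle, since all the work is in the earlier results. The only point to check is uniformity: the formula must be the same for all $A$. This holds because $\widetilde\Tor$ is a fixed formula, and the definability in Proposition~\ref{prop:general_R_AM}(a) is a syntactic substitution. I would also note that the hypotheses of Theorem~\ref{thm:def_muL_in_OOL_if_20th}, namely stability under complex conjugation and $\zeta_{20}\in A$, are exactly the ones assumed in the corollary.
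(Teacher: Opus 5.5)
Your proposal is correct and follows the paper's argument: define $\mu(A)$ by $\widetilde\Tor$ via Theorem~\ref{thm:def_muL_in_OOL_if_20th}, substitute it into the definition of $R_{A,M}$ as in Proposition~\ref{prop:general_R_AM}, and use Theorem~\ref{thm:identify_R} with $S(A)$ infinite to get $R_{A,\mu(A)}=\ZZ$. The paper's proof is a single line and leaves the appeal to Theorem~\ref{thm:identify_R} and the uniformity of the formula implicit; your explicit formula $\Phi$ spells these out.
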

\begin{proof}
	 Under the hypotheses on $A$, we observe that $\mu(A)$ is definable in $A$ by Theorem~\ref{thm:def_muL_in_OOL_if_20th}, proving $R_{A, \mu(A)} = \ZZ$ is definable in $A$ via Proposition~\ref{prop:general_R_AM}.
\end{proof}

Now we repeat the argument with some extra care to obtain our more general theorem.
\begin{theorem}\label{thm:def_Z_in_O_general}
    Fix an integer $D \geq1 $ and let $\zeta_n$ be a primitive $n$-th root of unity for all $n$. There is a single parameter-free first-order formula $Z_D(X)$ which defines $\ZZ$ in $\OO_L$ for every field $L\subseteq \Qtr(i)$ such that the set
    $$
        S_D := \{p \text{ prime} : [L(\zeta_p) : L] \leq D\}
    $$
    is infinite. 
    Therefore, all such $\OO_L$ have undecidable first-order theory.
\end{theorem}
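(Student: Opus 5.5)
We interpret, inside $\OO_L$, extension rings $\OO_L[X]/(f_{\bf c})$ of degree at most $D$, and apply the construction $R_{A,M}$ with $M$ the torsion subgroup of the interpreted ring. The constraint is that the single formula $Z_D$ must return exactly $\ZZ$, never something larger.

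\textbf{The formula.} For each $d\le D$, use Lemma~\ref{lem:interpretations} to translate formulas over $A_{\bf c}=\OO_L[X]/(f_{\bf c})$ into formulas over $\OO_L$ with parameters ${\bf c}\in\OO_L^d$. The naive choice of defining $\mu(A_{\bf c})$ by $\Tor$ fails, because $A_{\bf c}$ need not be a domain and need not lie in $\Qtr(i)$. I would therefore define $\ZZ$ as follows: $x\in\OO_L$ lies in $\ZZ$ iff $\Tor(x)\vee\neg\Tor(x)$... more concretely, iff for every $d\le D$ and every ${\bf c}\in\OO_L^d$ with $f_{\bf c}$ dividing some polynomial admitting a root of unity, the image of $x$ in $A_{\bf c}$ satisfies the $R$-condition. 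The $R$-condition should use $M=\{\epsilon\in A_{\bf c}:\epsilon=(e,0,\dots,0)\cdot$ \dots$\}$. This is messy, so I would choose the following cleaner route.

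\textbf{Cleaner route.} Let $M_{\bf c}$ be the set of roots of $f_{\bf c}$ raised to powers, i.e.\ $\{u\cdot X^k\}$. This is also awkward. The real plan is to take
$$M_{\bf c}:=\{\omega X^k:\ \omega\in\mu(\OO_L),\ k\ge0\}\subseteq A_{\bf c}.$$
This is definable, since $\mu(\OO_L)$ is definable by $\Tor$ in the base ring. The powers $X^k$ are not first-order definable, so instead I would define $M_{\bf c}$ as the multiplicative closure generated by $X$, where $f_{\bf c}$ is required to divide $X^N-1$.

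This replacement also fails: $N$ is not bounded, and in any case "multiplicative closure" is not first-order.

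\textbf{Final plan.} Define $x\in\ZZ$ iff for all $d\le D$ and all ${\bf c}$ such that $A_{\bf c}$ satisfies $\Tor^{(d)}(X;{\bf c})$, the image of $x$ lies in $R_{A_{\bf c},M}$, where $M=\Tor^{(d)}(\cdot;{\bf c})$. The point is that when $f_{\bf c}$ is the minimal polynomial of $\zeta_p$ over $L$, we get $A_{\bf c}\cong\OO_L[\zeta_p]$. This ring is stable under complex conjugation and lies in $\Qtr(i)$. However, the torsion-defining results in hand do not transfer to it directly: Theorem~\ref{thm:def_muL_in_OOL_if_20th} needs a primitive $20$-th root of unity in the ring, and Theorem~\ref{thm:def_muL_in_OOL_general} needs an integrally closed $\OO_L$. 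I would handle this by adjoining both $\zeta_{20}$ and $\zeta_p$, that is, interpreting degree up to $8D$ extensions. Then Theorem~\ref{thm:def_muL_in_OOL_if_20th} defines $\mu$, Theorem~\ref{thm:identify_R} identifies $R$ as $\ZZ+(\zeta_p-1)A$, and intersecting over all such ${\bf c}$ gives $\bigcap_{p\in S_D}(\ZZ+(\zeta_p-1)\OO_L[\zeta_p,\zeta_{20}])\cap\OO_L$. By Lemma~\ref{lem:intersection_cases}, this equals $\ZZ$.

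\textbf{Main obstacle.} The hard step is controlling bad witnesses ${\bf c}$ for which $A_{\bf c}$ is not a domain. In that case, the universal quantifier only gives extra constraints, so it shrinks the set; since $\ZZ\subseteq R_{A,M}$ by Proposition~\ref{prop:general_R_AM}(c), every integer survives. For this containment to hold, $M$ must be a subgroup of units in every $A_{\bf c}$. I would enforce this by intersecting $M$ with the set $\{\epsilon:\exists\eta\ \epsilon\eta=1\}$ and closing the witness condition under the requirement that $M$ be closed under products and inverses, which is first-order checkable. If $M$ fails that check, the clause is vacuous.

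Undecidability then follows because $\ZZ$ is definable.
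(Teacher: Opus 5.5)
Your final plan matches the paper's proof. You quantify universally over all ${\bf c}$ with $d\le 8D$, guard each clause with a first-order check that the interpreted $\widetilde\Tor$-set is a subgroup of units so that every integer survives, and take ${\bf c}$ to be the minimal polynomial of $\zeta_{20p}$ over $L$ so that Theorem~\ref{thm:def_muL_in_OOL_if_20th} and Theorem~\ref{thm:identify_R} apply in $\OO_L[\zeta_{20p}]$. One small slip: Lemma~\ref{lem:intersection_cases} is stated for a single ring, while your rings $\OO_L[\zeta_{20p}]$ vary with $p$. You can fix this either by enlarging them all to the common ring $\OO_L[\zeta_{20p} : p\in S_D]$, or by applying Lemma~\ref{lem:div_by_p} prime by prime, which is what the paper does.
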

\begin{proof}
    For each $1\leq d\leq 8D$ and ring $R$, define the following.
    Let $A_{\bf c} = R[X]/(f_{\bf c}(X))$ for ${\bf c}\in R^d$ and let $\theta_{\bf c}$ denote the class of $X$ in $A_{\bf c}$.
    Let $\widetilde \Tor^{(d)}(z; {\bf c})$ be the interpretation\footnote{We retain the tilde $\widetilde{\cdot}$ in the notation because we apply this formula to the entirety of $A_{\bf c}$, unlike the interpreted formula in the proof of Theorem~\ref{thm:def_muL_in_OOL_general} itself.} of formula \eqref{eq:def_muL_if_20} in $A_{\bf c}$ by Lemma~\ref{lem:interpretations}, 
    and let $M_{\bf c}$ be the subset of $A_{\bf c}$ defined by $\widetilde \Tor^{(d)}(z; {\bf c})$.
    Let $\Group^{(d)}(\bf c)$ be the first-order formula stating that the set $M_{\bf c}$ is a subgroup of $A_{\bf c}^\times$.
    
    With this notation, we define the formula:
    $$
        Z_D(x) \iffdef \bigwedge_{d = 1}^{8D} \forall {\bf c} 
        \left(\Group^{(d)}({\bf c}) \to x\in R_{A_{\bf c}, M_{\bf c}}
        \right).
    $$
    Note that $Z_D(x)$ corresponds to a first-order formula because $M_{\bf c}$ is uniformly defined in $A_{\bf c}$ by $\widetilde \Tor^{(d)}(z;{\bf c})$, which implies that there is a single formula defining $R_{A_{\bf c}, M_{\bf c}}$ in $A_{\bf c}$ with parameters ${\bf c}$.

    To see that this formula performs as desired, we first observe that Proposition~\ref{prop:general_R_AM} shows that $R_{A_{\bf c}, M_{\bf c}}$ is a ring containing $\ZZ$ whenever $M_{\bf c}$ is a group, and this is guaranteed when $\Group^{(d)}({\bf c})$ holds.
    Thus, $Z_D(n)$ holds for all integers $n\in \ZZ$.

    Now suppose that $x\in \OO_L$ is a non-integer so that $Z_D(x)$ holds.
    Consider a prime $p\in S_D$ with $p > 5$.
    Let ${\bf c}\in \OO_L^d$ be the coefficients of the monic minimal polynomial $f_{\bf c}(T)$ of $\zeta_{20p}$ over $L$.
    Note that 
    $$
    d = [L(\zeta_{20p}) : L] \leq [\QQ(\zeta_{20}) : \QQ][L(\zeta_p) : L] \leq \varphi(20)D = 8D
    $$
    by hypothesis.
    Moreover, $A_{\bf c} \cong \OO_L[\zeta_{20p}]\subseteq \Qtr(i)$ and $\Group^{(d)}({\bf c})$ holds because Theorem~\ref{thm:def_muL_in_OOL_if_20th} implies that $M_{\bf c} = \mu(\OO_L[\zeta_{20p}])$.
    Therefore, by Theorem~\ref{thm:identify_R},
    $$
        R_{A_{\bf c}, M_{\bf c}} 
            \cong R_{\OO_L[\zeta_{20p}], \mu(\OO_L[\zeta_{20p}])}
            \subseteq \ZZ + (\zeta_p - 1)\OO_L[\zeta_{20p}]
    $$
    Therefore, by Lemma~\ref{lem:div_by_p}, the discriminant of the monic minimal polynomial of $x$ over $\ZZ$ is divisible by $p$. But it is impossible for this to hold for infinitely many primes $p$, and this contradiction concludes the proof.
\end{proof}

We have now proven Theorem~\ref{thm:main_Zab} because $\Qab$ satisfies the hypotheses of Theorem~\ref{thm:def_Z_in_O_general} with $D = 1$, as it contains all roots of unity.

\section{Following the blueprint of Robinson and Henson for undecidability}
\label{sec:Blueprint_Henson_and_Robinson}
Even when we cannot prove that $\OO_L$ has a first-order definition of $\ZZ$, we can sometimes still prove that $\OO_L$ is undecidable.
To do this, we deploy the material of Section~\ref{sec:def_tor} to fulfill a well-known criterion of C.W. Henson, which was relayed by van den Dries \cite{vandenDries} and builds upon work of J. Robinson \cite{Robinson62}.
Specifically, this criterion gives a simple sufficient condition for when a ring of algebraic integers $A$ admits an interpretation of a weak essentially undecidable arithmetic theory known as R.M. Robinson's $\mathbf Q$.
This gives us the undecidability result that we seek; see \cite{MRUV20}, \cite{Shlapentokh18} and \cite{Springer24} for related use cases, in addition to the work on $\JR$-numbers discussed in Section~\ref{sec:JR}.
The lemma below is a mild generalization compared to the typical formulation of the criterion, as explained in the proof.

\begin{lemma}\label{lem:Robinson-Henson-d_general}
    Let $A$ be a ring of algebraic integers and fix $d\geq 1$. If there exists an $\mathcal L_{\text{ring}}$-formula $\varphi(x_1,\dots, x_d;{\bf b})$ which parameterizes a family of subsets of $A^d$ containing finite sets of arbitrarily large cardinality, then R.M. Robinson's $\mathbf Q$ is interpretable in $A$.
    In particular, the first-order theory of $A$ is undecidable.
\end{lemma}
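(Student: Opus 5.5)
The plan is to reduce to the classical Henson criterion, as relayed in \cite{vandenDries}. That criterion says: if a structure admits a formula $\phi(x;{\bf y})$ such that the sets $\{x : \phi(x;{\bf b})\}$, as ${\bf b}$ varies, include finite sets of arbitrarily large size, then R.M. Robinson's $\mathbf Q$ is interpretable in the structure. The only novelty here is that the defined sets live in $A^d$ rather than $A$, so the main step is to pass from tuples to single elements. I would first recall why the criterion works, since its hypotheses matter for the reduction. The sketch goes through finite sets: one uses a uniformly definable family of finite sets together with a definable way of coding ``a set of size $n+1$'' from ``a set of size $n$''. The standard route codes finite subsets of the given finite sets, and arithmetic on cardinalities then interprets $\mathbf Q$. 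Henson's version only needs the structure to be able to code pairing, or more generally a family of finite sets of unbounded size. Since $\mathbf Q$ is essentially undecidable and finitely axiomatized, interpretability of $\mathbf Q$ yields undecidability of the first-order theory of $A$.

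For the reduction, there are two options.

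The first is to note that Henson's criterion is stated for arbitrary formulas $\varphi(\bar x;\bar y)$ with $\bar x$ a tuple. The proof treats $A^d$ as the domain of an interpreted structure, namely $A$ itself viewed with $d$-tuples, and the interpretation of $\mathbf Q$ composes with the trivial interpretation of $A^d$ in $A$. This is essentially immediate, because interpretations are allowed to have domain a definable subset of a power $A^d$, modulo definable equivalence.

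The second option, more concrete and ring-specific, is to collapse tuples to single elements. Choose an injective, definable, parameter-dependent map $A^d \to A$ on the finite set in question: $(x_1,\dots,x_d) \mapsto x_1 + t x_2 + \dots + t^{d-1}x_d$ with a new parameter $t \in A$. For a finite set $F \subseteq A^d$ of algebraic integers, only finitely many $t \in \ZZ$ cause collisions, because each collision is a nonzero polynomial equation in $t$ of degree less than $d$. Hence some integer $t$, existentially allowed as a parameter, makes the image set have the same cardinality. Then $\varphi'(z;{\bf b},t) \iffdef \exists x_1\dots\exists x_d\,(\varphi(\bar x;{\bf b}) \wedge z = \sum_j t^{j-1}x_j)$ gives a family of subsets of $A$ containing finite sets of arbitrarily large size, and the classical single-variable criterion applies.

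The main obstacle is merely bookkeeping: verifying that the version of Henson's criterion in the literature indeed accepts parameterized families with arbitrary parameters, which it does. For safety I would present the second reduction, since it uses only that $A$ is an integral domain of characteristic zero.
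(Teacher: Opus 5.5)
Your second reduction is correct, but it takes a different route from the paper's. The paper does not collapse tuples injectively; it projects. For finite $S\subseteq A^d$ one has $|S|\le\prod_{i=1}^d|\pi_i(S)|$, so by pigeonhole some fixed coordinate $i$ has projected sets $\pi_i(S)$ that are finite and of unbounded size. The projected family is defined by existentially quantifying out the other $d-1$ variables, which reduces to the case $d=1$.

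That argument is purely combinatorial. It uses neither the ring operations nor the integral-domain property, and it introduces no new parameter. The cost is that it only guarantees sizes of order $|S|^{1/d}$, which is harmless here.

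Your map $(x_1,\dots,x_d)\mapsto\sum_j t^{j-1}x_j$ instead preserves cardinality exactly and needs no choice of coordinate. In exchange it costs an extra parameter $t$ and uses that $A$ is an infinite integral domain. Each of the at most $\binom{N}{2}$ collision polynomials $\sum_j (x_j-y_j)T^{j-1}$ is nonzero of degree $<d$, so it has fewer than $d$ roots in $A$, and hence some $t\in\ZZ$ avoids them all. Since the criterion allows arbitrary parameters, this is fine.

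Your first option, by contrast, is not a proof as stated. The single-variable criterion is not a fact about arbitrary structures: an equivalence relation with exactly one class of each finite size has a decidable theory, yet it uniformly defines finite sets of every size. The criterion's proof uses the ring structure of the domain carrying the finite sets, so it does not transfer to $A^d$ merely because interpretations may have domain a power of $A$. You were right to present the second reduction.
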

\begin{proof}
    As mentioned above, the case $d = 1$ is well-known; see \cite{Robinson62} and \cite[\S3.3]{vandenDries}. 
    The general case follows from a simple observation: Let $\pi_i : A^d \to A$ be projection onto the $i$-th coordinate. Clearly, if $S\subseteq A^d$ is finite, then so are all of the projections $\pi_i(S)$. 
    Moreover, $$
        |S| \leq \prod_{i = 1}^d |\pi_i(S)|.
    $$

    Therefore, because $\varphi$ defines a family of subsets of $A^d$ containing finite sets of arbitrarily large size, there is some $i$ so that the projection of these subsets onto the $i$-th coordinate also contains finite subsets of arbitrarily large size. In particular, because the projection is given by binding all variables except $x_i$ to an existential quantifier, this reduces to the $d = 1$ case of Henson and Robinson.
\end{proof}

The fact that Lemma~\ref{lem:Robinson-Henson-d_general} works for general $d$ is useful because Lemma~\ref{lem:interpretations} naturally provides us with formulas on a Cartesian power $\OO_L^d$, as in the following theorem.
 
\begin{theorem}\label{thm:undec}
    Let $L\subseteq \Qtr(i)$.
    If there exists a $D \geq 1$ so that $[L(\zeta) :L]\leq D$ for infinitely many roots of unity $\zeta$, then the first-order theory of $\OO_L$ is undecidable.
\end{theorem}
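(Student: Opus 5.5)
We will apply Lemma~\ref{lem:Robinson-Henson-d_general} with a formula on $\OO_L^{d}$ obtained from the interpretation trick of Lemma~\ref{lem:interpretations}. The family of finite sets will be the sets of primitive $m$-th roots of unity $\{z : \ord(z) = m\}$ for prime powers $m$, which are defined by $\psi(z;w)$ from \eqref{eq:psi_def} with parameter $w$ of order $m$ (Theorem~\ref{thm:prime_power_if_20th}(c)). Such a set has cardinality $\varphi(m)$, which is unbounded as $m$ grows.

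First, we reduce to prime-power roots of unity. If $[L(\zeta):L]\le D$ for infinitely many roots of unity $\zeta$, then infinitely many distinct orders $n$ occur, since each order has only finitely many primitive roots. For each such $n$, write $\zeta$ as a product of its prime-power-order components. Each component is a power of $\zeta$, so it lies in $L(\zeta)$ and generates an extension of degree at most $D$. Therefore, either some prime power $p^e$ with $e$ arbitrarily large occurs, or infinitely many distinct primes occur. In both cases we obtain prime powers $m$ with $\varphi(m)$ unbounded and $[L(\zeta_m):L]\le D$. Set $d_0 = 8D$.

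For each $1\le d\le d_0$, form $A_{\bf c} = \OO_L[X]/(f_{\bf c})$ as in the proof of Theorem~\ref{thm:def_Z_in_O_general}, and take ${\bf c}$ to be the coefficients of the minimal polynomial of $\zeta_{20m}$ over $L$. Its degree is at most $8D$, and $A_{\bf c}\cong \OO_L[\zeta_{20m}]$. This ring lies in $\Qtr(i)$, is stable under complex conjugation, and contains a primitive $20$-th root of unity. Hence Theorem~\ref{thm:prime_power_if_20th}(I), with $\mu$ defined by $\widetilde\Tor$, applies. In $A_{\bf c}$, the formula $\psi(z;w)$ with $w = \zeta_m$ then defines exactly the $\varphi(m)$ primitive $m$-th roots of unity. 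Interpreting via Lemma~\ref{lem:interpretations}, for each $d$ we get an $\mathcal L_{\text{ring}}$-formula $\varphi_d(x_1,\dots,x_d;{\bf c},{\bf w})$ on $\OO_L^d$ with parameters ${\bf c}$ and the coordinates ${\bf w}$ of $w$. For the chosen parameters, it defines a finite subset of $\OO_L^d$ of size $\varphi(m)$, because $A_{\bf c}\cong\OO_L^d$ bijectively.

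The main obstacle is that Lemma~\ref{lem:Robinson-Henson-d_general} needs one formula with fixed $d$, while $d=[L(\zeta_{20m}):L]$ varies with $m$. We handle this by pigeonhole: infinitely many of the chosen $m$ share a common $d\le 8D$, and we fix that $d$. The lemma only requires that some parameter values give finite sets of arbitrarily large size; other parameter values may give infinite or empty sets, which is harmless. Hence $\varphi_d$ meets the hypothesis of Lemma~\ref{lem:Robinson-Henson-d_general}, so $\mathbf Q$ is interpretable in $\OO_L$ and its theory is undecidable.

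Two points need care. First, for our chosen ${\bf c}$ the ring $A_{\bf c}$ is genuinely $\OO_L[\zeta_{20m}]$, since $\OO_L$ is integrally closed; this is the same observation as in the proof of Theorem~\ref{thm:def_muL_in_OOL_general}. Second, for the chosen parameters the definition of $\mu$ in $A_{\bf c}$ is exact, so no unintended extra elements enter the set.
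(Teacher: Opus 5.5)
Your proposal is correct and is essentially the paper's proof. You adjoin $\zeta_{20m}$ via Lemma~\ref{lem:interpretations}, use $\psi$ from \eqref{eq:psi_def} (with $\mu$ defined by $\widetilde\Tor$ under condition (I) of Theorem~\ref{thm:prime_power_if_20th}) to cut out the $\varphi(m)$ primitive $m$-th roots of unity, pigeonhole the degree $d$, and invoke Lemma~\ref{lem:Robinson-Henson-d_general}; your preliminary reduction to prime-power orders is just a repackaging of the paper's choice of prime-power divisors $p^e\mid n$.

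One small slip: the bound $[L(\zeta_{20m}):L]\le 8D$ can fail when $m$ is a power of $2$ or $5$ (e.g.\ $m=2$ with $L\cap\QQ(\zeta_{40})=\QQ$ gives degree $16$). However, $\zeta_{20m}$ is a root of $X^{20}-\zeta_{20m}^{20}$ over $L(\zeta_m)$, so $[L(\zeta_{20m}):L]\le 20D$ as in the paper, and the pigeonhole only needs some finite bound.
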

\begin{proof}
    Let $\zeta_n$ be a primitive $n$-th root of unity for all $n$.
    By hypothesis, there are infinitely many integers $n$ so that
    \[
        [L(\zeta_{20n}) : L] = [L(\zeta_{20n}) : L(\zeta_n)][L(\zeta_{n}) : L]\leq  20D.
    \]
    We deduce that there is some $1\leq d\leq 20D$ and an infinite set $S$ of positive integers so that $[L(\zeta_{20n}) : L]$ is precisely $d$ for all $n\in S$.

    Translate the formula \eqref{eq:psi_def} by the interpretation trick of Lemma~\ref{lem:interpretations} to get a formula over $R^d \cong R[X]/(f_{\bf c}(X))$ for every ring $R$, with additional parameters ${\bf c}$ for the coefficients of the monic polynomial $f_{\bf c}$. Call this formula $\psi^{[d]}({\bf z};{\bf w}, {\bf c})$.

    Let $n\in S$.
    Choose ${\bf c}$ to be the coefficients of the degree-$d$ minimal polynomial of $\zeta_{20n}$ over~$L$.
    These coefficients lie in $\OO_L$ because $\OO_L$ is integrally closed and $\zeta_{20n}$ is an algebraic integer.
    For each prime power $p^e$ dividing $n$, let ${\bf w}$ be the element of $\OO_L^d \cong \OO_L[X]/(f_{\bf c}(X)) \cong \OO_L[\zeta_{20n}]$ corresponding to the primitive $p^e$-th root of unity $\zeta_{20n}^{20n/p^e}$.
    We remark that $\OO_L[\zeta_{20n}]$ also contains a primitive $20$-th root of unity $\zeta_{20n}^{n}$ by construction.
    Therefore, $\psi^{[d]}({\bf z};{\bf w}, {\bf c})$ defines the set of primitive $p^e$-th roots of unity in $\OO_L[\zeta_{20n}]$, interpreted in $\OO_L^d$, by Theorem~\ref{thm:prime_power_if_20th}. 
    This set has cardinality $\varphi(p^e) = p^{e-1}(p-1)$.
    
    Therefore, the family of definable subsets of $\OO_L^d$ parameterized by $\psi^{[d]}$ contains a set of cardinality $p^{e-1}(p-1)$ for every $n\in S$ and every prime power $p^e$ dividing $n$. 
    Because $S$ is infinite, we have $\sup_{n\in S}\max_{p^e \mid n} p^{e-1}(p-1) = \infty$, where the inner maximum is taken over prime powers dividing $n$.
    In conclusion, we have a parameterized family of definable subsets containing finite sets of arbitrarily large cardinality, so we have established undecidability via Lemma~\ref{lem:Robinson-Henson-d_general}.
\end{proof}

\section{JR numbers}\label{sec:JR}

To finish, we make some remarks on the implications of Theorem~\ref{thm:undec} and related constructions for the case of totally imaginary quadratic extensions of totally real fields with a fixed $\JR$-number.

\subsection{Review of \texorpdfstring{$\JR$}{JR}-numbers}

Let $K$ be a totally real field. Recall the following.

\begin{definition}
    Given a totally real number $\alpha$ and $t\in \RR$, write $0\ll \alpha \ll t$ if every conjugate of $\alpha$ lies in the interval $(0,t)$.
    For a totally real subset $X$, write $X_t = \{\alpha\in X : 0\ll \alpha \ll t\}$. The $\JR$-number of $X$ is:
    $$
        \JR(X) := \inf \{t\in \RR : \#X_t = \infty\}.
    $$
    If $\JR(X)$ is finite, then we say that the $\JR$-number is \emph{attained} if the infimum is actually a minimum.
\end{definition}

The usefulness of this definition relies on the following result, which is a straightforward application of Siegel's four-squares theorem \cite{Siegel21}; see \cite[Thm.~4.3]{Springer24} for recent exposition.

\begin{theorem}
    \label{thm:Siegel_formula}
    There is a single existential formula $\phi(x;a,b)$ such that if $K$ is any totally real field and $a,b$ are integers with $b\neq 0$, then $\phi(x;a,b)$ holds over $\OO_K$ if and only if $0\ll x\ll \frac{a}{b}$.
\end{theorem}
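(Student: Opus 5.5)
We reduce the statement to two ingredients. The first is Siegel's theorem that a totally positive element of $K$ is a sum of four squares in $K$; we apply it in the form where the field of definition is controlled. The second is clearing denominators uniformly.

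\emph{Step 1: total positivity.} First we produce an existential formula $\mathrm{Pos}(x)$ that defines $\{x\in\OO_K : x\gg 0\}$ for every totally real $K$, uniformly in $K$. By Siegel, $x\in K$ with $x\gg 0$ is a sum of four squares of elements of $K$. Write these as $y_j/w$ with $y_j,w\in\OO_K$ and $w\neq 0$; clearing denominators gives $w^2x=y_1^2+\dots+y_4^2$. Conversely, if such $w\neq 0$ and $y_j$ exist in $\OO_K$, then every conjugate of $w^2x$ is $\geq 0$, so $x$ is totally nonnegative. To obtain strict positivity we ask for $x\neq 0$ and also that $x$ be a unit multiple of nothing degenerate. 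A cleaner route is to ask that $x$ times a totally positive element plus a positive constant is nonzero. The simplest version is to require $w^2x=\sum y_j^2$ with $w\neq0$ and $x\neq 0$. Since $K$ is totally real and $x\neq0$, no conjugate of $x$ vanishes, so totally nonnegative together with nonzero implies totally positive. Thus
$$\mathrm{Pos}(x)\iffdef x\neq 0\wedge\exists w\,\exists y_1\dots\exists y_4\ (w\neq0\wedge w^2x=y_1^2+y_2^2+y_3^2+y_4^2)$$
is existential; an inequation $u\neq 0$ is existential in a domain via $\exists v\,(uv=\ldots)$ or simply allowed as a negated atom. This formula is independent of $K$. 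The one point to check is that Siegel's theorem applies to infinite-degree totally real $K$. It does: given $x\in K$ totally positive, apply Siegel in the number field $\QQ(x)$, which is totally real, and the resulting squares lie in $\QQ(x)\subseteq K$.

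\emph{Step 2: the interval.} For integers $a,b$ with $b\neq0$, we have $0\ll x\ll a/b$ iff $x\gg0$ and $a/b-x\gg0$. The second condition is equivalent to $b(a-bx)\gg 0$ after multiplying by the rational number $b^2>0$; that is, $ab-b^2x\gg0$. So we set
$$\phi(x;a,b)\iffdef \mathrm{Pos}(x)\wedge\mathrm{Pos}(ab-b^2x),$$
where $a,b$ are treated as parameters (variables), making $\phi$ a single existential formula. When $a,b$ are integers with $b\neq0$, the conditions are exactly $0\ll x\ll a/b$, by Step 1 applied to $x$ and to $ab-b^2x\in\OO_K$.

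The main subtlety, and the step I expect to need care, is Step 1. There, Siegel's representation lives over the field $K$ rather than the ring $\OO_K$, and we must ensure the denominator-cleared equation does not admit spurious solutions. This is handled by requiring $w\neq0$ and using that squares of totally real numbers are totally nonnegative, so the converse direction is automatic. The reduction of the infinite-degree case to the finite field $\QQ(x)$ makes the formula uniform in $K$.
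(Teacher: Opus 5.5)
Your proof is correct and is essentially the standard argument the paper has in mind when it calls this a straightforward application of Siegel's four-squares theorem: apply Siegel in the totally real number field $\QQ(x)$, clear denominators, use the negated atom $x\neq 0$ (allowed in an existential formula) to get strict positivity, and encode the upper bound as $\mathrm{Pos}(ab-b^2x)$. The only fix needed is expository: delete the exploratory asides in Step 1 (the ``unit multiple of nothing degenerate'' remark and the incomplete $\exists v\,(uv=\ldots)$), since your final formula $\mathrm{Pos}$ already works as stated.
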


In particular, it was an observation of J. Robinson that if $\JR(\OO_K)$ is either infinite or an attained finite value, then $\OO_K$ is undecidable \cite{Robinson62} by an application of J. Robinson's version of Lemma~\ref{lem:Robinson-Henson-d_general} with the parameterized family of subsets given by $\phi(x;a,b)$.
It is also known that $\JR(\OO_K)\in [4,\infty]$.
For more on $\JR$-numbers, see \cite{Castillo-thesis, CVV20, GR19, JV08, MS24, Springer20, Springer24, VV15, VV16, VV26} as a start.

\subsection{Totally imaginary extensions of totally real fields with fixed \texorpdfstring{$\JR$}{JR}-number}

First, observe that Theorem~\ref{thm:undec} applies to rings of integers in totally real fields whose JR-number is at the low extreme.
\begin{corollary}\label{cor:undec_for_JR_min}
    Let $K$ be a totally real field. Assume that $\JR(\OO_K) = 4$ is an attained minimum, or equivalently, assume that $K$ contains $\zeta + \zeta^{-1}$ for infinitely many roots of unity~$\zeta$. Then $\OO_L$ has undecidable first-order theory for all totally imaginary quadratic extensions $L/K$.
\end{corollary}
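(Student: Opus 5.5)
The plan is to reduce the corollary to Theorem~\ref{thm:undec} with $D = 2$, which needs two things: the stated equivalence of hypotheses, and the degree bound $[L(\zeta):L]\leq 2$ for infinitely many roots of unity $\zeta$.

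\textbf{The equivalence.} I will show that $\JR(\OO_K) = 4$ being attained is the same as $K$ containing $\zeta + \zeta^{-1}$ for infinitely many roots of unity $\zeta$. By definition, the condition that $\JR(\OO_K) = 4$ is an attained minimum means $(\OO_K)_4$ is infinite. In other words, infinitely many totally real integers $\alpha\in \OO_K$ satisfy $0\ll\alpha\ll 4$.

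For the forward direction, take such an $\alpha$. Lemma~\ref{lem:traces_in_0_4} gives $\alpha = \zeta+\zeta^{-1}+2$ for a root of unity $\zeta$. Hence $\zeta+\zeta^{-1} = \alpha - 2\in K$. Distinct $\alpha$ give distinct values $\zeta+\zeta^{-1}$, and each value corresponds to at most two roots of unity $\zeta^{\pm1}$. So infinitely many distinct roots of unity $\zeta$ arise.

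For the converse, suppose $\zeta+\zeta^{-1}\in K$ with $\zeta\neq\pm1$. The conjugates of $\zeta+\zeta^{-1}+2$ have the form $2+2\cos(2\pi k/n)$ with $\gcd(k,n) = 1$. For $n\geq 3$ these lie strictly inside $(0,4)$. The element is also an algebraic integer, so it lies in $(\OO_K)_4$. Infinitely many roots of unity give infinitely many such elements, because each value $\zeta+\zeta^{-1}$ comes from at most two $\zeta$. Thus $(\OO_K)_4$ is infinite.

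The same argument shows that whenever $(\OO_K)_t$ is infinite we get $t\ge 4$, since $(\OO_K)_t\subseteq$ elements of this form when $t\le 4$. This is consistent with the known bound $\JR\geq4$, and I only need to mention it briefly.

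\textbf{The degree bound.} Fix a totally imaginary quadratic extension $L/K$. By the lemma in Section~\ref{sec:prelim}, $L\subseteq\Qtr(i)$. Each of the infinitely many roots of unity $\zeta$ from the previous step satisfies
\[
\zeta^2 - (\zeta+\zeta^{-1})\zeta + 1 = 0,
\]
whose coefficients lie in $K\subseteq L$. Hence $[L(\zeta):L]\leq 2$, and Theorem~\ref{thm:undec} applies with $D=2$, giving undecidability of $\OO_L$.

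\textbf{The main obstacle.} The only delicate point is the endpoint bookkeeping in the equivalence. I must exclude $\zeta=\pm1$, which give the values $0$ and $4$ outside the open interval. I must also keep the count of distinct elements versus distinct roots of unity straight, since each value $\zeta+\zeta^{-1}$ comes from two roots of unity. Both issues are routine.
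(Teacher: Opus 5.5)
Your proof is correct and follows the paper's argument. The equivalence of the two hypotheses comes from Lemma~\ref{lem:traces_in_0_4}, with the known bound $\JR\geq 4$ needed for the converse. The bound $[L(\zeta):L]\leq 2$, which you get from $X^2-(\zeta+\zeta^{-1})X+1\in K[X]$ and the paper gets from $[\QQ(\zeta):\QQ(\zeta+\zeta^{-1})]\leq 2$, then lets Theorem~\ref{thm:undec} apply with $D=2$.
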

\begin{proof}
    Lemma~\ref{lem:traces_in_0_4} establishes the fact that $K$ contains $\zeta + \zeta^{-1}$ for infinitely many roots of unity $\zeta$ if and only if $\JR(\OO_K) = 4$ is an attained minimum.
    The proof concludes by observing that $[\QQ(\zeta) : \QQ(\zeta + \zeta^{-1})] \leq 2$ for every root of unity $\zeta$, so the hypotheses of Theorem~\ref{thm:undec} are satisfied with $D = 2$.
\end{proof}

Finally, the analogue of Corollary~\ref{cor:undec_for_JR_min} is also true when $\JR(\OO_K) = \infty$.
The proof of this fact does not require the subgroup of torsion units, but is rather a combination of the main results of \cite{MRS24} with the proof strategy seen in \cite{MRUV20, Springer20, Springer24}.
In particular, we note that this theorem is strictly stronger than the main theorems of \cite{MRUV20, Springer20}.

\begin{theorem}\label{thm:undec_JR_infty}
    Suppose that $K$ is a totally real field with $\JR(\OO_K) = \infty$. If $L/K$ is any totally imaginary quadratic extension, then $\OO_L$ has undecidable first-order theory.
\end{theorem}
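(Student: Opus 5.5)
The plan is to find inside $\OO_L$ a definable copy of $\OO_K$, or of an order in $\OO_K$ of bounded conductor. Once we have it, we run J. Robinson's argument: relativize the Siegel formula $\phi(x;a,b)$ of Theorem~\ref{thm:Siegel_formula} to that copy. Since $\JR(\OO_K)=\infty$, every set $\{x\in\OO_K : 0\ll x\ll a/b\}$ is finite, and these sets have unbounded size as $a/b\to\infty$. Lemma~\ref{lem:Robinson-Henson-d_general} then gives undecidability. The torsion subgroup plays no role. We use the construction $R_{A,M}$ of Proposition~\ref{prop:general_R_AM} with $A=\OO_L$ and $M=R_3(\OO_L)^\times$. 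This group is existentially definable, and it is totally real by Proposition~\ref{prop:basic_Rm}(c).

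\textbf{Step 1: $R_{\OO_L,M}\subseteq \OO_K$.} Let $x\in R_{\OO_L,M}$ and $\epsilon\in M$ with $\epsilon\ne 1$. The definition gives $\delta\in M$ and $y$ with $\delta-1=(\epsilon-1)x+(\epsilon-1)^2y$. Conjugating and using $\overline\delta=\delta$ and $\overline\epsilon=\epsilon$, we get $(\epsilon-1)(x-\overline x)\in(\epsilon-1)^2\OO_L$. Hence $x-\overline x\in(\epsilon-1)\OO_L$ for every $\epsilon\in M\setminus\{1\}$. Taking $\epsilon=u^k$ for a fixed non-torsion $u\in M$ and suitable $k$, the elements $\epsilon-1$ have norms divisible by unboundedly many primes. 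Arguing as in Lemma~\ref{lem:intersection_cases}, this forces $x=\overline x$. If $M$ is finite, then $\OO_K^\times$ has rank $0$ and $K=\QQ$. In that case $\OO_L$ is imaginary quadratic, and its theory is undecidable by the classical result (or by \cite[Thm.~4.8]{MRS24}, since $L$ is not big). From now on assume $M$ is infinite.

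\textbf{Step 2: $R_{\OO_L,M}$ is large in $\OO_K$.} We aim to show $\ZZ+N\OO_K\subseteq R_{\OO_L,M}$ for a fixed integer $N$, say $N=9$. Given $\epsilon\in M$ and $x\in\OO_K$, take $\delta=\epsilon^{a}$ with $a\in\ZZ$ and $x\equiv a\bmod(\epsilon-1)$, exactly as in the second half of the proof of Theorem~\ref{thm:identify_R}. This works only when $\OO_K/(\epsilon-1)$ is generated by the image of $\ZZ$, which is false in general. This is the step I expect to be the main obstacle. My first remedy is to replace $M$ by a definable subgroup whose elements $\epsilon$ satisfy $\epsilon\equiv 1$ modulo a high power. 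Failing that, I would use directly the results of \cite{MRS24} identifying $R_{\OO_L,M}$ for unit groups $M$ of this type. The goal is to obtain some order $\OO\subseteq\OO_K$ containing $N\OO_K$ that is definable in $\OO_L$.

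\textbf{Step 3: conclusion.} Once $\OO$ is definable, $\OO_K=\{x\in\OO_L : Nx\in\OO \wedge x\overline{x}\text{-free condition}\}$ is not needed. Instead, define $\OO_K$ as the set of $x\in\OO_L$ with $Nx\in\OO$ and $x$ fixed by conjugation, where the latter condition is automatic from Step 1 applied to $Nx$. Then relativize all quantifiers of $\phi(x;a,b)$ to this definable set. Positivity is invariant under scaling by $N$, so the resulting family $\{0\ll x\ll a/b\}\cap\OO_K$ consists of finite sets of arbitrarily large cardinality. Lemma~\ref{lem:Robinson-Henson-d_general} with $d=1$ finishes the proof.
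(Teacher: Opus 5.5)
\textbf{There is a genuine gap in Step~2, and it cannot be closed as planned.} You flag Step~2 as the main obstacle. In fact its target is false in general. Take $K$ a number field with $[K:\QQ]\geq 2$; all of these have $\JR(\OO_K)=\infty$. Pick a rational prime $q\nmid 6N$ and a non-torsion $\epsilon\in M$ with $q\mid \epsilon-1$ (a suitable power of any non-torsion element).

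On $G=\{\delta\in M:\delta\equiv 1 \bmod (\epsilon-1)\OO_L\}$, the map $\delta\mapsto(\delta-1)/(\epsilon-1)$ is a homomorphism into $\OO_L/(\epsilon-1)\OO_L$. Every $x\in R_{\OO_L,M}$ has its residue in the image of this map. Now reduce further mod $q$:
\begin{itemize}
\item $G\subseteq\OO_K^\times$ has rank at most $[K:\QQ]-1$, and its torsion $\{\pm1\}$ dies in the $\mathbb F_q$-vector space $\OO_L/q\OO_L$. So the image is spanned by at most $[K:\QQ]-1$ vectors.
\item $\ZZ+N\OO_K$ surjects onto $\OO_K/q\OO_K\cong\mathbb F_q^{[K:\QQ]}$.
\end{itemize}
Hence $R_{\OO_L,M}$ contains no order of $\OO_K$. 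Passing to a subgroup of $M$ does not change this rank count. Step~3 therefore has nothing to build on.

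\textbf{The missing idea is that you never needed $R$ to be large.} The Siegel formula $\phi(x;a,b)$ is existential, so it is preserved along the inclusions $\ZZ\hookrightarrow R\hookrightarrow \OO_K$. For any definable ring $R$ with $\ZZ\subseteq R\subseteq \OO_K$, the set $\{x\in R: R\models \phi(x;a,b)\}$ is sandwiched between $\{x\in\ZZ : 0<x<a/b\}$ and $\{x\in\OO_K: 0\ll x\ll a/b\}$. It is therefore finite, and of unbounded size as $a/b$ grows, so Lemma~\ref{lem:Robinson-Henson-d_general} applies. This is exactly the paper's proof.

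\textbf{How to repair your argument.} Your Step~1 already gives $R_{\OO_L,M}\subseteq\OO_K$. Proposition~\ref{prop:general_R_AM}(c) gives $\ZZ\subseteq R_{\OO_L,M}$, since $M=R_3(\OO_L)^\times$ is a group. So deleting Steps~2--3 and inserting the sandwich argument yields a complete proof. In Step~1 it is cleanest to argue that $|N(u^k-1)|\to\infty$ in a fixed number field containing $x,\overline x,u$, which contradicts divisibility into $N(x-\overline x)\neq 0$.

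The repaired proof is slightly more self-contained than the paper's. The paper takes $M=\OO_L^\times$ and cites \cite[Prop.~2.11]{MRS24} for $R\subseteq\OO_K$. Your choice $M=R_3(\OO_L)^\times$ makes $M$ totally real, so that containment follows from your elementary conjugation argument.
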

\begin{proof}
    If $K = \QQ$, then the result is classical; see \cite{Robinson59}.
    If $K\neq \QQ$, then $L$ is not a quadratic imaginary field, so the subring $R := R_{\OO_L, \OO_L^\times}$ satisfies the containments $\ZZ \subseteq R\subseteq \OO_K$  by \cite[Prop.~2.11]{MRS24}. Recall that this ring is definable in $\OO_L$; see \cite[Cor.~2.7]{MRS24}.

    Now the proof method of \cite[Thm.~4.4]{Springer24} applies immediately.
    Indeed, apply the formula $\phi(x;a,b)$ of Theorem~\ref{thm:Siegel_formula} by relativizing all variables to $R$. For each $t = \frac{a}{b}\in \QQ$, we find
    \begin{align*}
        \{x\in \ZZ : 0\ll x\ll \frac{a}{b}\} &= \{x\in \ZZ : \ZZ\vDash \phi(x;a,b)\}\\
        &\subseteq \{x\in R : R\vDash\phi(x;a,b)\}\\
        &\subseteq  \{x\in \OO_K : \OO_K\vDash \phi(x;a,b)\}
        =\{x\in \OO_K : 0\ll x\ll \frac{a}{b}\}
    \end{align*}
    The inclusions are clear because the inclusion maps $\ZZ\hookrightarrow R$ and $R\hookrightarrow \OO_K$ are injective homomorphisms and existential formulas are preserved under injective homomorphisms.
    Then, as $a$ and $b$ vary, the right-hand side is always finite because $\JR(\OO_K) = \infty$, but the sets on the left-hand side get arbitrarily large. Thus, we are done by Lemma~\ref{lem:Robinson-Henson-d_general}.
\end{proof}

 \bibliographystyle{alpha}
 \bibliography{references}

@phdthesis{Castillo-thesis,
	author = {Marianela {Castillo Fern\'andez}},
	note = {URL: http://repositorio.udec.cl/handle/11594/3003},
	school = {Universidad de Concepci\'on},
	title = {On the Julia Robinson number of rings of totally real algebraic integers in some towers of Nested Square Roots},
	year = 2018}

@article {CVV20,
    AUTHOR = {Castillo, Marianela and Vidaux, Xavier and Videla, Carlos R.},
     TITLE = {Julia {R}obinson numbers and arithmetical dynamic of quadratic
              polynomials},
   JOURNAL = {Indiana Univ. Math. J.},
  FJOURNAL = {Indiana University Mathematics Journal},
    VOLUME = {69},
      YEAR = {2020},
    NUMBER = {3},
     PAGES = {873--885},
      ISSN = {0022-2518,1943-5258},
   MRCLASS = {11R04 (11R09 11R11 11R32 11R80 37P05)},
  MRNUMBER = {4095177},
MRREVIEWER = {Alexandra\ Shlapentokh},
       DOI = {10.1512/iumj.2020.69.7928},
       URL = {https://doi.org/10.1512/iumj.2020.69.7928},
}

@article{DKMWY25,
    AUTHOR = {Daans, Nicolas and Kala, V{\'i}t{\v e}zslav and Man, Siu Hang and
              Widmer, Martin and Yatsyna, Pavlo},
     TITLE = {Most totally real fields do not have universal forms or the
              {N}orthcott property},
   JOURNAL = {Proc. Natl. Acad. Sci. USA},
  FJOURNAL = {Proceedings of the National Academy of Sciences of the United
              States of America},
    VOLUME = {122},
      YEAR = {2025},
    NUMBER = {20},
     PAGES = { \ Paper No. e2419414122, 8},
      ISSN = {0027-8424,1091-6490},
   MRCLASS = {11E12 (11R80)},
  MRNUMBER = {4964796},
       DOI = {10.1073/pnas.2419414122},
       URL = {https://doi.org/10.1073/pnas.2419414122},
}

@article{DF21,
	 AUTHOR = {Dittmann, Philip and Fehm, Arno},
     TITLE = {Nondefinability of rings of integers in most algebraic fields},
   JOURNAL = {Notre Dame J. Form. Log.},
  FJOURNAL = {Notre Dame Journal of Formal Logic},
    VOLUME = {62},
      YEAR = {2021},
    NUMBER = {3},
     PAGES = {589--592},
      ISSN = {0029-4527,1939-0726},
   MRCLASS = {11U09 (03C40 12L12)},
  MRNUMBER = {4323047},
MRREVIEWER = {Ricardo\ Bianconi},
       DOI = {10.1215/00294527-2021-0029},
       URL = {https://doi.org/10.1215/00294527-2021-0029},
}

@article{vandenDries,
 AUTHOR = {van den Dries, Lou},
     TITLE = {Elimination theory for the ring of algebraic integers},
   JOURNAL = {J. Reine Angew. Math.},
  FJOURNAL = {Journal f\"ur die Reine und Angewandte Mathematik. [Crelle's
              Journal]},
    VOLUME = {388},
      YEAR = {1988},
     PAGES = {189--205},
      ISSN = {0075-4102,1435-5345},
   MRCLASS = {03C60 (11D72 12L05)},
  MRNUMBER = {944190},
MRREVIEWER = {\c S.\ A.\ Basarab},
       DOI = {10.1515/crll.1988.388.189},
       URL = {https://doi.org/10.1515/crll.1988.388.189},
}

@incollection{FHV94,
	author = {Fried, Michael D. and Haran, Dan and V\"{o}lklein, Helmut},
	booktitle = {Arithmetic geometry ({T}empe, {AZ}, 1993)},
	doi = {10.1090/conm/174/01849},
	mrclass = {12D15 (11G25 12E25 12F12)},
	mrnumber = {1299732},
	mrreviewer = {Mieczys\l aw Kula},
	pages = {1--34},
	publisher = {Amer. Math. Soc., Providence, RI},
	series = {Contemp. Math.},
	title = {Real {H}ilbertianity and the field of totally real numbers},
	url = {https://doi.org/10.1090/conm/174/01849},
	volume = {174},
	year = {1994}}

@article {GR19,
    AUTHOR = {Gillibert, Pierre and Ranieri, Gabriele},
     TITLE = {Julia {R}obinson numbers},
   JOURNAL = {Int. J. Number Theory},
  FJOURNAL = {International Journal of Number Theory},
    VOLUME = {15},
      YEAR = {2019},
    NUMBER = {8},
     PAGES = {1565--1599},
      ISSN = {1793-0421,1793-7310},
   MRCLASS = {11R04 (11R80 11U05)},
  MRNUMBER = {3994148},
MRREVIEWER = {Alexandra\ Shlapentokh},
       DOI = {10.1142/S1793042119500908},
       URL = {https://doi.org/10.1142/S1793042119500908},
}

@book {Hodges,
    AUTHOR = {Hodges, Wilfrid},
     TITLE = {Model theory},
    SERIES = {Encyclopedia of Mathematics and its Applications},
    VOLUME = {42},
 PUBLISHER = {Cambridge University Press, Cambridge},
      YEAR = {1993},
     PAGES = {xiv+772},
      ISBN = {0-521-30442-3},
   MRCLASS = {03-01 (03-02 03Cxx)},
  MRNUMBER = {1221741},
MRREVIEWER = {J.\ M.\ Plotkin},
       DOI = {10.1017/CBO9780511551574},
       URL = {https://doi.org/10.1017/CBO9780511551574},
}

@article {JV08,
    AUTHOR = {Jarden, Moshe and Videla, Carlos R.},
     TITLE = {Undecidability of families of rings of totally real integers},
   JOURNAL = {Int. J. Number Theory},
  FJOURNAL = {International Journal of Number Theory},
    VOLUME = {4},
      YEAR = {2008},
    NUMBER = {5},
     PAGES = {835--850},
      ISSN = {1793-0421,1793-7310},
   MRCLASS = {11U05 (03B25 03C07 12E30)},
  MRNUMBER = {2458847},
MRREVIEWER = {Alexandra\ Shlapentokh},
       DOI = {10.1142/S1793042108001705},
       URL = {https://doi.org/10.1142/S1793042108001705},
}

@misc{Kartas21,
      title={Diophantine problems over $\mathbb{Z}^{ab}$ modulo prime numbers}, 
      author={Konstantinos Kartas },
      year={2021},
      eprint={2104.06741},
      archivePrefix={arXiv},
      primaryClass={math.NT},
      url={https://arxiv.org/abs/2104.06741}, 
      howpublished={arXiv preprint: \url{https://arxiv.org/abs/2104.06741}}, 
}

@incollection {Koenigsmann14,
    AUTHOR = {Koenigsmann, Jochen},
     TITLE = {Undecidability in number theory},
 BOOKTITLE = {Model theory in algebra, analysis and arithmetic},
    SERIES = {Lecture Notes in Math.},
    VOLUME = {2111},
     PAGES = {159--195},
 PUBLISHER = {Springer, Heidelberg},
      YEAR = {2014},
      ISBN = {978-3-642-54935-9; 978-3-642-54936-6},
   MRCLASS = {11U05 (03B25 03D35)},
  MRNUMBER = {3330199},
MRREVIEWER = {Kirsten\ Eisentr\"ager},
       DOI = {10.1007/978-3-642-54936-6\_5},
       URL = {https://doi.org/10.1007/978-3-642-54936-6_5},
}

@article {Kronecker1857,
    AUTHOR = {Kronecker, L.},
     TITLE = {Zwei {S}\"atze \"uber {G}leichungen mit ganzzahligen
              {C}oefficienten},
   JOURNAL = {J. Reine Angew. Math.},
  FJOURNAL = {Journal f\"ur die Reine und Angewandte Mathematik. [Crelle's
              Journal]},
    VOLUME = {53},
      YEAR = {1857},
     PAGES = {173--175},
      ISSN = {0075-4102,1435-5345},
   MRCLASS = {99-04},
  MRNUMBER = {1578994},
       DOI = {10.1515/crll.1857.53.173},
       URL = {https://doi.org/10.1515/crll.1857.53.173},
}

@article{MRUV20,
	author = {Mart\'{\i}nez-Ranero, Carlos and Utreras, Javier and Videla, Carlos R.},
	doi = {10.1090/proc/14849},
	fjournal = {Proceedings of the American Mathematical Society},
	issn = {0002-9939},
	journal = {Proc. Amer. Math. Soc.},
	mrclass = {11U05 (03B25 11R11)},
	mrnumber = {4055926},
	number = {3},
	pages = {961--964},
	title = {Undecidability of {$\mathbb Q^{(2)}$}},
	url = {https://doi.org/10.1090/proc/14849},
	volume = {148},
	year = {2020}}

@article {MR20,
    AUTHOR = {Mazur, Barry and Rubin, Karl},
     TITLE = {Big fields that are not large},
   JOURNAL = {Proc. Amer. Math. Soc. Ser. B},
  FJOURNAL = {Proceedings of the American Mathematical Society. Series B},
    VOLUME = {7},
      YEAR = {2020},
     PAGES = {159--169},
      ISSN = {2330-1511},
   MRCLASS = {11G05 (11R04 11U05 14G05)},
  MRNUMBER = {4173816},
MRREVIEWER = {Kirsten\ Eisentr\"ager},
       DOI = {10.1090/bproc/57},
       URL = {https://doi.org/10.1090/bproc/57},
}

@article {MRS24,
    AUTHOR = {Mazur, Barry and Rubin, Karl and Shlapentokh, Alexandra},
     TITLE = {Defining {$\mathbb Z$} using unit groups},
   JOURNAL = {Acta Arith.},
  FJOURNAL = {Acta Arithmetica},
    VOLUME = {214},
      YEAR = {2024},
     PAGES = {235--255},
      ISSN = {0065-1036,1730-6264},
   MRCLASS = {11U05},
  MRNUMBER = {4772285},
MRREVIEWER = {Fred\ W.\ Roush},
       DOI = {10.4064/aa230505-6-6},
       URL = {https://doi.org/10.4064/aa230505-6-6},
}

@article {MS24,
    AUTHOR = {Mu\~noz Sandoval, Carlos},
     TITLE = {New values of the {J}ulia {R}obinson number},
   JOURNAL = {Cubo},
  FJOURNAL = {Cubo. A Mathematical Journal},
    VOLUME = {26},
      YEAR = {2024},
    NUMBER = {3},
     PAGES = {387--406},
      ISSN = {0716-7776,0719-0646},
   MRCLASS = {11U05 (03B25 11R80)},
  MRNUMBER = {4843276},
       DOI = {10.56754/0719-0646.2603.387},
       URL = {https://doi.org/10.56754/0719-0646.2603.387},
}

@article {Robinson59,
    AUTHOR = {Robinson, Julia},
     TITLE = {The undecidability of algebraic rings and fields},
   JOURNAL = {Proc. Amer. Math. Soc.},
  FJOURNAL = {Proceedings of the American Mathematical Society},
    VOLUME = {10},
      YEAR = {1959},
     PAGES = {950--957},
      ISSN = {0002-9939,1088-6826},
   MRCLASS = {02.00},
  MRNUMBER = {112842},
MRREVIEWER = {R.\ M.\ Martin},
       DOI = {10.2307/2033628},
       URL = {https://doi.org/10.2307/2033628},
}

@incollection{Robinson62,
	author = {Robinson, Julia},
	booktitle = {Studies in mathematical analysis and related topics},
	mrclass = {02.74 (10.80)},
	mrnumber = {0146083},
	mrreviewer = {S. Ja\'{s}kowski},
	pages = {297--304},
	publisher = {Stanford Univ. Press, Stanford, Calif},
	title = {On the decision problem for algebraic rings},
	year = {1962}}

@article{Shlapentokh18,
    AUTHOR = {Shlapentokh, Alexandra},
     TITLE = {First-order decidability and definability of integers in
              infinite algebraic extensions of the rational numbers},
   JOURNAL = {Israel J. Math.},
  FJOURNAL = {Israel Journal of Mathematics},
    VOLUME = {226},
      YEAR = {2018},
    NUMBER = {2},
     PAGES = {579--633},
      ISSN = {0021-2172,1565-8511},
   MRCLASS = {03C40 (03B25 11U05)},
  MRNUMBER = {3819703},
MRREVIEWER = {Xavier\ Vidaux},
       DOI = {10.1007/s11856-018-1708-y},
       URL = {https://doi.org/10.1007/s11856-018-1708-y},
}

@article{Siegel21,
	author = {Siegel, Carl},
	doi = {10.1007/BF01203627},
	fjournal = {Mathematische Zeitschrift},
	issn = {0025-5874},
	journal = {Math. Z.},
	mrclass = {DML},
	mrnumber = {1544496},
	number = {3-4},
	pages = {246--275},
	title = {Darstellung total positiver {Z}ahlen durch {Q}uadrate},
	url = {https://doi.org/10.1007/BF01203627},
	volume = {11},
	year = {1921}}

@article{Springer20,
	author = {Springer, Caleb},
	doi = {10.1090/proc/15153},
	fjournal = {Proceedings of the American Mathematical Society},
	issn = {0002-9939},
	journal = {Proc. Amer. Math. Soc.},
	mrclass = {11U05 (03D35 11R04 11R27 11R80)},
	mrnumber = {4143388},
	mrreviewer = {Alexandra Shlapentokh},
	number = {11},
	pages = {4705--4715},
	title = {Undecidability, unit groups, and some totally imaginary infinite extensions of {$\mathbb{Q}$}},
	url = {https://doi.org/10.1090/proc/15153},
	volume = {148},
	year = {2020}}

@article {Springer24,
    AUTHOR = {Springer, Caleb},
     TITLE = {Definability and decidability for rings of integers in totally
              imaginary fields},
   JOURNAL = {Bull. Lond. Math. Soc.},
  FJOURNAL = {Bulletin of the London Mathematical Society},
    VOLUME = {56},
      YEAR = {2024},
    NUMBER = {1},
     PAGES = {306--318},
      ISSN = {0024-6093,1469-2120},
   MRCLASS = {11U05 (11R04 12L05)},
  MRNUMBER = {4710197},
MRREVIEWER = {Alexandra\ Shlapentokh},
       DOI = {10.1112/blms.12933},
       URL = {https://doi.org/10.1112/blms.12933},
}

@article {VV15,
    AUTHOR = {Vidaux, Xavier and Videla, Carlos R.},
     TITLE = {Definability of the natural numbers in totally real towers of
              nested square roots},
   JOURNAL = {Proc. Amer. Math. Soc.},
  FJOURNAL = {Proceedings of the American Mathematical Society},
    VOLUME = {143},
      YEAR = {2015},
    NUMBER = {10},
     PAGES = {4463--4477},
      ISSN = {0002-9939,1088-6826},
   MRCLASS = {03B25 (03C40 11R80 11U05)},
  MRNUMBER = {3373945},
MRREVIEWER = {Ricardo\ Bianconi},
       DOI = {10.1090/S0002-9939-2015-12592-0},
       URL = {https://doi.org/10.1090/S0002-9939-2015-12592-0},
}

@article {VV16,
    AUTHOR = {Vidaux, Xavier and Videla, Carlos R.},
     TITLE = {A note on the {N}orthcott property and undecidability},
   JOURNAL = {Bull. Lond. Math. Soc.},
  FJOURNAL = {Bulletin of the London Mathematical Society},
    VOLUME = {48},
      YEAR = {2016},
    NUMBER = {1},
     PAGES = {58--62},
      ISSN = {0024-6093,1469-2120},
   MRCLASS = {11U05 (03D35 11R80)},
  MRNUMBER = {3455748},
MRREVIEWER = {Alexandra\ Shlapentokh},
       DOI = {10.1112/blms/bdv089},
       URL = {https://doi.org/10.1112/blms/bdv089},
}

@article {VV26,
    AUTHOR = {Vidaux, Xavier and Videla, Carlos R.},
     TITLE = {An approach to {J}ulia {R}obinson numbers through the lattice
              of subfields},
   JOURNAL = {J. Pure Appl. Algebra},
  FJOURNAL = {Journal of Pure and Applied Algebra},
    VOLUME = {230},
      YEAR = {2026},
    NUMBER = {7},
     PAGES = {Paper No. 108295, 19},
      ISSN = {0022-4049,1873-1376},
   MRCLASS = {12F05 (12F10)},
  MRNUMBER = {5072925},
       DOI = {10.1016/j.jpaa.2026.108295},
       URL = {https://doi.org/10.1016/j.jpaa.2026.108295},
}

@book {Washington,
    AUTHOR = {Washington, Lawrence C.},
     TITLE = {Introduction to cyclotomic fields},
    VOLUME = {83},
   EDITION = {Second},
 PUBLISHER = {Springer-Verlag, New York},
      YEAR = {1997},
     PAGES = {xiv+487},
      ISBN = {0-387-94762-0},
   MRCLASS = {11R18 (11-01 11-02 11R23)},
  MRNUMBER = {1421575},
MRREVIEWER = {T.\ Mets\"ankyl\"a},
       DOI = {10.1007/978-1-4612-1934-7},
       URL = {https://doi.org/10.1007/978-1-4612-1934-7},
}

\end{document}